\newcommand\cyr{%
\renewcommand\rmdefault{wncyr}%
\renewcommand\sfdefault{wncyss}%
\renewcommand\encodingdefault{OT2}%
\normalfont
\selectfont}
\DeclareTextFontCommand{\textcyr}{\cyr}
\DeclareFontFamily{OT1}{rsfs}{}
\DeclareFontShape{OT1}{rsfs}{n}{it}{<-> rsfs10}{}
\DeclareMathAlphabet{\fmathscr}{OT1}{rsfs}{n}{it}
\numberwithin{equation}{section}
\newtheorem{Theoremx}{Theorem}
\newtheorem{theorem}{Theorem}[section]
\newtheorem{lemma}[theorem]{Lemma}
\newtheorem{proposition}[theorem]{Proposition}
\newtheorem{corollary}[theorem]{Corollary}
\newtheorem{conjecture}[theorem]{Conjecture}
\theoremstyle{definition}
\newtheorem{definition}[theorem]{Definition}
\newtheorem{remark}[theorem]{Remark}
\theoremstyle{remark}
\newtheorem{example}[theorem]{Example}
\newcommand{\Ass}{\operatorname{Ass}}
\newcommand{\im}{\operatorname{Im}}
\newcommand{\Spec}{\operatorname{Spec}}
\newcommand{\Ht}{\operatorname{ht}}
\newcommand{\id}{\operatorname{id}}
\newcommand{\Supp}{\operatorname{Supp}}
\newcommand{\Att}{\operatorname{Att}}
\newcommand{\Ann}{\operatorname{Ann}}
\newcommand{\fm}{\mathfrak{m}}
\newcommand{\fp}{\mathfrak{p}}
\newcommand{\fq}{\mathfrak{q}}
\newcommand{\fa}{\mathfrak{a}}
\newcommand{\FF}{\mathbb{F}}
\newcommand{\NN}{\mathbb{N}}
\begin{document}
\title[Nilpotence of Frobenius action on local cohomology]{Nilpotence of Frobenius actions on local cohomology and Frobenius closure of ideals}

\author[Thomas Polstra]{Thomas Polstra}
\thanks{Polstra was supported in part by NSF Postdoctoral Research Fellowship DMS $\#1703856$.}
\address{Department of Mathematics, University of Utah, Salt Lake City, UT 84102 USA}
\email{polstra@math.utah.edu}

\author[Pham Hung Quy]{Pham Hung Quy}
\thanks{ Quy is partially supported by the Vietnam National Foundation for Science and Technology Development (NAFOSTED) under grant number 101.04-2017.10.}
\address{Department of Mathematics, FPT University, and Thang Long Institute of Mathematics and Applied Sciences, Ha Noi, Viet Nam}
\email{quyph@fe.edu.vn}

\thanks{2010 {\em Mathematics Subject Classification\/}:13A35, 13D45.}
\keywords{$F$-nilpotent ring, $F$-injective ring, Frobenius closure, filter regular sequence, local cohomology, tight closure.}

\begin{abstract} The study of Frobenius actions on local cohomology modules over a local ring of prime characteristic has interesting connections with the theory of tight closure. This paper establishes new connections by developing the notion of relative Frobenius actions on local cohomology. As an application, we show that a ring has $F$-nilpotent singularities if and only if the tight closure of every parameter ideal is equal to its Frobenius closure.
\end{abstract}

\dedicatory{Dedicated to Professor Ngo Viet Trung on the occasion of his sixty-fifth birthday.}

 \maketitle

\section{Introduction}

Denote by $(R,\fm,k)$ a commutative Noetherian local ring with unique maximal ideal $\fm$ and residue field $k$. Unless otherwise stated, we assume $R$ is of prime characteristic $p>0$. We let $F^e:R\to R$ denote the $e$th-iterate of the Frobenius endomorphism which maps an element $r\mapsto r^{p^e}$. The Frobenius endomorphisms induce natural Frobenius actions of local cohomology modules
\[
F^e:H^i_\fm(R)\to H^i_\fm(R)
\]
for each $i\in \NN$. There are several interesting classes of singularities which can be defined in terms of the behavior of these Frobenius actions. These include $F$-rational, $F$-injective, and $F$-nilpotent singularities. The ring $R$ is $F$-injective if the Frobenius actions above are all injective. We say $R$ is an $F$-nilpotent ring if the above Frobenius actions are nilpotent on the local cohomology modules $H^i_\fm(R)$ when $i<\dim(R)$ and the nilpotent submodule of $H^d_\fm(R)$ is ``as large as possible.'' We refer the reader to Section~\ref{Preliminary Section} for a precise definition. An excellent equidimensional ring $R$ is $F$-rational if and only if it is both $F$-injective and $F$-nilpotent. All three notions have interesting and important connections with the theory of tight closure.

Let $R^{\circ}$ be the multiplicative set of elements of $R$ which are not contained in a minimal prime ideal. Given an ideal $I\subseteq R$ we let $I^{[p^e]}$ denote the expansion of $I$ along $F^e$. If $x\in R$ then we say that $x$ is in the tight closure of $I$ if there exists $c\in R^\circ$ such that $cx^{p^e}\in I^{[p^e]}$ for all $e\gg 0$. The collection of all such elements is denoted by $I^*$. If the above element $c\in R^\circ$ can be taken to be the element $1$, i.e., if $x^{p^e}\in I^{[p^e]}$ for all, equivalently for some, $e\gg 0$, then it is said that $x$ is in the Frobenius closure of $I$. The Frobenius closure of an ideal is denoted by $I^F$. The sets $I^F$ and $I^*$ are ideals of $R$ and there are inclusions $I\subseteq I^F\subseteq I^*$. If $I=I^F$ then $I$ is said to be Frobenius closed and if $I=I^*$ then $I$ is called tightly closed. We refer the reader to \cite{HH90, HH94, H96} for the basics of tight closure.

A parameter ideal of $R$ is an ideal $\fq$ which is generated by a full system of parameters for $R$. The ring $R$ is $F$-rational if and only if $\fq=\fq^*$ for every parameter ideal of $R$. The second author and Shimomoto show in \cite{QS17} that if $\fq=\fq^F$ for each parameter ideal of $R$ then $R$ is $F$-injective, but they also show that not every parameter ideal need be Frobenius closed in an $F$-injective ring. In this article, the following classification of $F$-nilpotent rings is given:

\begin{Theoremx}\label{Main theorem frobenius closure} Let $(R,\fm,k)$ be an excellent equidimensional local ring of prime characteristic $p>0$. Then the following are equivalent:
\begin{enumerate}
\item The ring $R$ is $F$-nilpotent.
\item $\fq^F=\fq^*$ for every parameter ideal $\fq$ of $R$.
\end{enumerate}
\end{Theoremx}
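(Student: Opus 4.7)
The plan is to connect the ideal-theoretic equality in (2) with the cohomological nilpotence statement in (1) through the direct limit presentation $H^d_\fm(R)=\varinjlim_t R/(x_1^t,\ldots,x_d^t)$ for a system of parameters, together with a Nagel--Schenzel style description of $H^i_\fm(R)$ for $i<d$ via a filter regular system of parameters. The bridge in both directions will be the paper's relative Frobenius actions on subquotients of the local cohomology modules.

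For $(1)\Rightarrow(2)$, fix a parameter ideal $\fq=(x_1,\ldots,x_d)$ and an element $x\in\fq^*$; the inclusion $\fq^F\subseteq\fq^*$ is automatic, so only $\fq^*\subseteq\fq^F$ requires proof. The image of $x$ in $H^d_\fm(R)$ lies in $0^*_{H^d_\fm(R)}$, which by the $F$-nilpotence hypothesis coincides with $0^F_{H^d_\fm(R)}$, so some Frobenius power annihilates this class. Unwinding the direct limit yields $e,t\geq 0$ with $x^{p^e}(x_1\cdots x_d)^t\in(x_1^{p^e+t},\ldots,x_d^{p^e+t})$, placing $x^{p^e}$ in the limit closure of $\fq^{[p^e]}$ rather than $\fq^{[p^e]}$ itself. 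The remaining gap is closed by invoking Frobenius nilpotence on $H^i_\fm(R)$ for $i<d$ through the relative Frobenius action: a further Frobenius power clears the ``error'' between limit closure and honest colon membership and upgrades the containment to $x^{p^{e+e'}}\in\fq^{[p^{e+e'}]}$.

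For $(2)\Rightarrow(1)$, the top cohomology is immediate: any $\eta\in 0^*_{H^d_\fm(R)}$ is represented by some $x$ with $x\in(x_1^n,\ldots,x_d^n)^*$, so (2) places $x$ in the Frobenius closure of this parameter ideal and $F^e(\eta)=0$. For Frobenius nilpotence of $H^i_\fm(R)$ with $i<d$, choose a filter regular system of parameters $x_1,\ldots,x_d$; via Nagel--Schenzel, a class in $H^i_\fm(R)$ is represented by $y\in R$ satisfying a limit-type relation $y(x_1\cdots x_i)^s\in(x_1^{t+s},\ldots,x_i^{t+s})$. Completing $x_1^{t+s},\ldots,x_i^{t+s}$ with large powers of $x_{i+1},\ldots,x_d$ to a parameter ideal $\fq$ and applying the standard colon-capturing consequence of tight closure deposits $y$ in $\fq^*$. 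Hypothesis (2) then places $y$ in $\fq^F$, and tracing back through the relative Frobenius action on the relevant subquotient kills the class of $y$ after a Frobenius iteration.

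The principal obstacle, present in both directions, is the transfer between ``limit closure'' data delivered by the direct limit defining $H^d_\fm(R)$ and the genuine ideals $\fq^{[p^e]}$ in which Frobenius closure is defined. Keeping the Frobenius exponents uniform through this transfer is precisely what the relative Frobenius action developed earlier in the paper is designed to accomplish, and it is exactly this machinery that converts nilpotence of $H^i_\fm(R)$ for $i<d$ into the uniform exponent needed to promote limit-closure membership to Frobenius-closure membership. The remaining arguments are standard tight-closure bookkeeping together with the choice of filter regular parameters that Nagel--Schenzel requires.
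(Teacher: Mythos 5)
Your overall architecture is the right one and matches the paper's in outline: both directions are mediated by the direct limit descriptions of $0^*$ and $0^F$ inside local cohomology, by Nagel--Schenzel, and by the relative Frobenius actions $F^e_R\colon H^i_\fm(R/I)\to H^i_\fm(R/I^{[p^e]})$. The treatment of the top module in $(2)\Rightarrow(1)$ and the opening moves of $(1)\Rightarrow(2)$ are fine. But at the two points where the real work happens you assert the conclusion rather than prove it. In $(1)\Rightarrow(2)$, the passage from $x^{p^e}$ lying in the limit closure of $\fq^{[p^e]}$ to $x^{p^{e+e'}}\in\fq^{[p^{e+e'}]}$ is exactly the hard content; saying that nilpotence of the lower $H^i_\fm(R)$ ``clears the error'' is a restatement of what must be shown. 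The paper does not attack the limit closure directly: it descends along a filter regular system of parameters using the relative deformation theorem (Theorem~\ref{general deformation section 5}), until in dimension zero the statement literally becomes $\fq^*/\fq=0^{*_R}_{R/\fq}=0^{F_R}_{R/\fq}=\fq^F/\fq$. Each descent step needs the identifications $0^{*_R}_{H^t_K(R/I)}\cong\varinjlim (I,x^n)^*/(I,x^n)$ and likewise for $F$ (Lemmas~\ref{Direct limit of Frobenius closures lemma} and \ref{Direct limit of tight closures lemma}), the latter requiring equidimensionality to produce an element of $R^\circ\cap\fa(R)^t$.

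The more serious gap is in $(2)\Rightarrow(1)$ for $i<d$. After colon capturing you obtain $y\in(x_1^{n_1},\ldots,x_i^{n_i},x_{i+1}^N,\ldots,x_d^N)^*=(\cdots)^F$ for every $N$, hence for each $N$ some exponent $e_N$ with $y^{p^{e_N}}\in(\cdots)^{[p^{e_N}]}$. To kill the class of $y$ in $H^i_\fm(R)\cong H^0_\fm\bigl(H^i_{(x_1,\ldots,x_i)}(R)\bigr)$ you need one exponent working for all $N$ simultaneously (equivalently $y\in(x_1^{n_1},\ldots,x_i^{n_i})^F$), and nothing in your sketch produces that uniformity. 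The Krull-intersection argument that would finish from here is the paper's Corollary~\ref{Frobenius closure partial parameter ideal}, which is \emph{deduced from} Theorem~\ref{Main theorem frobenius closure} via the Frobenius test exponent, so invoking it would be circular. The paper's substitute is the (LC) property of ideals generated by filter regular sequences (Theorem~\ref{general Katzman}): the uniform annihilation $\fm^{Cp^e}H^0_\fm(R/I^{[p^e]})=0$ is what makes the relative deformation theorem reversible (part (2) of Theorem~\ref{general Deformation}) and lets one climb back up from the zero-dimensional quotients to $R$. Your proposal never invokes anything of this kind, and without it the ``tracing back through the relative Frobenius action'' step does not close. (Minor slip: the relation $y(x_1\cdots x_i)^s\in(x_1^{t+s},\ldots,x_i^{t+s})$ says the class of $y$ \emph{vanishes} in $H^i_{(x_1,\ldots,x_i)}(R)$; the condition for lying in $H^0_\fm$ of that module is annihilation by a power of $\fm$, i.e.\ $y\in(x_1^t,\ldots,x_i^t):\fm^\infty$ modulo the limit system.)
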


The example of an $F$-injective ring with a parameter ideal not being Frobenius closed given in \cite{QS17} is not equidimensional. However, we shall see that all $F$-nilpotent rings are equidimensional, and so Theorem~\ref{Main theorem frobenius closure} might be an indication that all parameter ideals in an equidimensional $F$-injective ring are Frobenius closed.

If $x_1,\ldots, x_d$ is a system of parameters for $R$ then the top local cohomology module with support in the maximal ideal has a very explicit description:
\[
H^d_\fm(R)\cong \varinjlim_N\left(\cdots \to \frac{R}{(x_1^N,\ldots, x_d^N)}\xrightarrow{ x_1\cdots x_d}\frac{R}{(x_1^{N+1},\ldots, x_d^{N+1})}\to \ldots\right).
\]
Furthermore, if $R$ is Cohen-Macaulay, i.e., $H^i_\fm(R)=0$ for all $i<\dim (R)$, then the maps in direct limit system are injective. Having an explicit discription of $H^d_\fm(R)$ provides a great advantage in making connections between the behavior of Frobenius actions on local cohomology modules and prime characteristic properties of parameter ideals in rings which are assumed to be Cohen-Macaulay. For example, an important open problem, which is solved in the Cohen-Macaulay case, is whether $F$-injectivity deforms. That is if $x\in R$ is a regular element such that $R/(x)$ is an $F$-injective ring then is it necessarily the case that $R$ is an $F$-injective as well? Fedder proved that $R$ is indeed $F$-injective under the assumptions $R$ is Cohen-Macaulay and $R/(x)$ is $F$-injective for some regular element $x$ in \cite{F83}. We refer the reader to \cite{HMS14} and \cite{MQ16} for more recent developments on the deformation of $F$-injectivity problem.

Similar to the study of $F$-injective rings, the difficulties of understanding $F$-nilpotent rings comes from the study of non-Cohen-Macaulay rings. The notion of a filter regular element allows some insight to the behavior of nonzero lower local cohomology modules. An element $x\in \fm$ is called a filter regular element if $x$ avoids all non-maximal associated primes of $R$. The $e$th-iterate of the Frobenius endomorphism $F^e:R/(x)\to R/(x)$ can be factored as $R/(x)\to R/(x^{p^e})\to R/(x)$ where the second map is the natural projection. In particular, there are induced maps of local cohomology modules $F^e_R: H^i_\fm(R/(x))\to H^i_\fm(R/(x^{p^e}))$. We say that $R/(x)$ is $F$-nilpotent relative to $R$ if for each $i<\dim(R/(x))$ and $\eta\in H^i_\fm(R/(x))$ there exists $e\in \NN$ such that $F^e_R(\eta)=0$ as an element of $H^i_\fm(R/(x^{p^e}))$. We will also require that the relative nilpotent submodule of $H^{d-1}_\fm(R/(x))$ be as ``large as possible.'' We refer the reader to Section~\ref{Section F-nilpotent}  for a precise definition. Our proof of Theorem~\ref{Main theorem frobenius closure} will depend on the following characterization of $F$-nilpotent rings.

\begin{Theoremx}\label{Main theorem deformation} Let $(R,\fm,k)$ be a excellent and equidimensional local ring  of prime characteristic $p>0$. Then the following are equivalent:
\begin{enumerate}
\item The ring $R$ is $F$-nilpotent.
\item For each filter regular element $x$ on $R$ the cyclic module $R/(x)$ is $F$-nilpotent with respect to $R$.
\end{enumerate}
\end{Theoremx}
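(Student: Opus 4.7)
The plan is to reduce both implications to a diagram chase along the commutative ladder of long exact sequences obtained by applying $H^\bullet_\fm$ to the short exact sequences
\[
0 \to R/(0:_R x) \xrightarrow{\cdot x} R \to R/(x) \to 0, \qquad 0 \to R/(0:_R x^{p^e}) \xrightarrow{\cdot x^{p^e}} R \to R/(x^{p^e}) \to 0,
\]
connected by the morphism whose components are $F^e$ on the two copies of $R$ and $F^e_R$ on the quotients. Filter regularity of $x$ forces each $0:_R x^{p^e}$ to have finite length, so $H^i_\fm(R/(0:_R x^{p^e})) = H^i_\fm(R)$ for every $i \geq 1$ and the resulting rows take the shape
\[
\cdots \to H^i_\fm(R) \xrightarrow{x^{p^e}} H^i_\fm(R) \to H^i_\fm(R/(x^{p^e})) \xrightarrow{\delta_e} H^{i+1}_\fm(R) \to \cdots,
\]
with the vertical maps $F^e, F^e_R, F^e$ intertwining the rows. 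The entire proof will amount to transporting nilpotence information across this ladder between $H^i_\fm(R)$ for $i<d$ and $H^i_\fm(R/(x))$ for $i<d-1$.

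For $(1)\Rightarrow(2)$, I take $\eta \in H^i_\fm(R/(x))$ with $i < d-1$. Since $R$ is $F$-nilpotent and $i+1 < d$, Frobenius annihilates $\delta_1(\eta) \in H^{i+1}_\fm(R)$ after finitely many iterations; commutativity of the ladder then forces $\delta_e(F^e_R(\eta)) = 0$, so $F^e_R(\eta)$ lifts to some $\zeta \in H^i_\fm(R)$. Nilpotence of Frobenius on $H^i_\fm(R)$ kills $\zeta$ in further iterations, and a second trip around the ladder produces $F^{e+e'}_R(\eta) = 0$.

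For $(2)\Rightarrow(1)$, let $\xi \in H^i_\fm(R)$ with $i < d$. Because $H^i_\fm(R)$ is $\fm$-torsion, prime avoidance inside a sufficiently high power of $\fm$ yields a filter regular $x$ with $x\xi = 0$, and exactness exhibits $\xi = \delta_1(\eta)$ for some $\eta \in H^{i-1}_\fm(R/(x))$. When $i \geq 1$ the inequality $i-1 < d-1$ lets me apply the relative hypothesis to obtain $e$ with $F^e_R(\eta) = 0$, and then $F^e(\xi) = \delta_e(F^e_R(\eta)) = 0$. When $i = 0$ and $d \geq 2$, the same diagram produces $\xi^{p^e} \in (x^{p^e})$ in $R$; combined with $x\xi = 0$ this forces $\xi^{p^e+1} = 0$, so $F^{e+1}(\xi) = 0$.

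The main obstacle is the top of the local cohomology range, namely the statements about the nilpotent submodule of $H^d_\fm(R)$ on the absolute side and of $H^{d-1}_\fm(R/(x))$ on the relative side, where one cannot argue element by element but must instead match the two ``as large as possible'' nilpotent submodules. The plan is to use the $i=d-1$ instance of the ladder to show that $\delta_1$ carries the relative nilpotent submodule of $H^{d-1}_\fm(R/(x))$ into $0^*_{H^d_\fm(R)}$, and that conversely every $\eta$ with $\delta_1(\eta) \in 0^*_{H^d_\fm(R)}$ is relatively nilpotent; the equidimensional and excellent hypotheses are essential here to guarantee the tight-closure compatibilities that support these comparisons (and they also cover the remaining $d=1$ case, which is absorbed into the top analysis). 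Once the two ``as large as possible'' conditions are transported across $\delta_1$, both implications of the theorem conclude at the top.
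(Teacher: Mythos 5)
Your architecture coincides with the paper's: the lower-degree part of both implications is exactly the diagram chase of Theorem~\ref{Deformation type statement} along the ladder induced by $0\to R/(0:x)\xrightarrow{\;x\;}R\to R/(x)\to 0$, and the top-degree part is handled, as in Theorem~\ref{general deformation section 5}, by transporting the two ``as large as possible'' conditions across the connecting map $\delta$. The lower-degree portion of your argument is complete (your separate treatment of $i=0$ is unnecessary, since $H^0_\fm(R)$ is always $F$-nilpotent once $\dim R>0$, but it is correct), and your replacement of $x$ by a filter regular element in a high power of $\fm$ is a legitimate substitute for the paper's replacement of $x$ by $x^{p^e}$.

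The top-degree portion, however, is only a plan, and the one statement you leave unproved is where the substance of the theorem lies: in the $(2)\Rightarrow(1)$ direction you must show that $\delta_1(\eta)\in 0^*_{H^d_\fm(R)}$ forces $\eta\in 0^{*_R}_{H^{d-1}_\fm(R/(x))}$, so that hypothesis (2) applies to $\eta$. (The companion statement, that $\eta\in 0^{*_R}_{H^{d-1}_\fm(R/(x))}$ implies $\delta_1(\eta)\in 0^*_{H^d_\fm(R)}$, is a direct chase: apply the $R$-linear map $\delta_e$ to $cF^e_R(\eta)$.) The hard statement is not a chase: from $cF^e(\delta_1(\eta))=0$ one only learns that $cF^e_R(\eta)$ lies in the image of $H^{d-1}_\fm(R)\to H^{d-1}_\fm(R/(x^{p^e}))$ for each $e\gg0$, and one must produce a single $c'\in R^\circ$ killing all of these images. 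This is exactly what the paper's Lemma~\ref{ann and colon} and Lemma~\ref{Direct limit of tight closures lemma} supply: after completing, $\fa(R)=\fa_0(R)\cdots\fa_{d-1}(R)$ satisfies $\dim R/\fa(R)<d$, so equidimensionality gives $c'\in\fa(R)\cap R^\circ$, and $c'$ annihilates $H^{d-1}_\fm(R)$, hence the images in question; then $c'c\in R^\circ$ witnesses $\eta\in 0^{*_R}_{H^{d-1}_\fm(R/(x))}$. Saying that excellence and equidimensionality ``guarantee the tight-closure compatibilities'' names the hypotheses but not the mechanism; without this device, or the equivalent identification $0^{*_R}_{H^{d-1}_\fm(R/(x))}\cong\varinjlim_n (x,x_2^n,\ldots,x_d^n)^*/(x,x_2^n,\ldots,x_d^n)$ of Lemma~\ref{Direct limit of tight closures lemma}, the implication $(2)\Rightarrow(1)$ does not close at the top. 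You should also record there, as you did in lower degrees, that $x$ must first be replaced by a power $x^{p^{e_0}}$ annihilating the given class $\xi\in 0^*_{H^d_\fm(R)}$ before $\xi$ can be written as $\delta_1(\eta)$.
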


Theorem~\ref{Main theorem deformation} is crucial to proof of Theorem~\ref{Main theorem frobenius closure}, but more is needed. It is not enough to consider the algebraic properties of a filter regular element, we will need to understand algebraic properties of a filter regular sequence. A sequence of elements $x_1,\ldots, x_\ell$ is a called a filter regular sequence if for each $1\leq j\leq \ell$ the class of $x_j$ is a filter regular element of $R/(x_1,\ldots,x_{j-1})$.  We show that ideals generated by a filter regular sequence enjoy the following desirable property:

\begin{Theoremx}\label{Main theorem LC} Let $(R,\fm,k)$ be a local ring of prime characteristic $p>0$. Let $x_1,\ldots,x_t$ be a filter regular sequence of $R$ and $I=(x_1,\ldots,x_t)$. Then there exists a positive integer $C$ such that for all $e\in \NN$ 
\[
\fm^{Cp^e}\cdot H^0_\fm(R/I^{[p^e]})=0.
\]
\end{Theoremx}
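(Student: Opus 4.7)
My plan is to induct on $t$, proving the slightly more flexible statement that there exists $C$ such that $\fm^{Cn}\cdot H^0_\fm(R/(x_1^n,\ldots,x_t^n))=0$ for every $n\geq 1$; setting $n=p^e$ recovers the theorem. Since all relevant modules behave well under completion, one may replace $R$ by $\hat R$ whenever it is convenient. The base case $t=0$ is immediate: $H^0_\fm(R)$ is $\fm$-torsion and finitely generated, so some fixed $\fm^c$ annihilates it.

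For the inductive step, set $M_n = R/(x_1^n,\ldots,x_{t-1}^n)$ and $I_n = (x_1^n,\ldots,x_t^n)$, so that $R/I_n = M_n/x_t^n M_n$. The key preliminary observation is that every power $x_1^n,\ldots,x_t^n$ of a filter regular sequence remains filter regular, as can be checked on the punctured spectrum where filter regularity coincides with being a genuine regular sequence. Consequently $x_t^n$ is a filter regular element on $M_n$, giving $(0:_{M_n} x_t^n)\subseteq H^0_\fm(M_n)$, and a short diagram chase then shows $x_t^n M_n \cap H^0_\fm(M_n) = x_t^n H^0_\fm(M_n)$. Splitting the four-term exact sequence for multiplication by $x_t^n$ on $M_n$ into two short exact sequences and taking local cohomology, together with the vanishing of higher local cohomology for the finite length module $(0:_{M_n} x_t^n)$, yields the short exact sequence
\[
0\to H^0_\fm(M_n)/x_t^n H^0_\fm(M_n)\to H^0_\fm(R/I_n)\to \bigl(0:_{H^1_\fm(M_n)} x_t^n\bigr)\to 0.
\]
By the inductive hypothesis $\fm^{C_{t-1}n}$ annihilates $H^0_\fm(M_n)$, and in particular its quotient on the left.

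The main obstacle is therefore the right-hand term: one must produce a constant $A$, independent of $n$, with $\fm^{An}\cdot(0:_{H^1_\fm(M_n)} x_t^n)=0$. The natural attempt is Matlis duality over $\hat R$: the dual $(0:_{H^1_\fm(M_n)} x_t^n)^{\vee}$ identifies with $N_n/x_t^n N_n$, where $N_n = H^1_\fm(M_n)^{\vee}$ is a finitely generated $\hat R$-module, and the problem becomes producing a uniform inclusion $\fm^{An}N_n\subseteq x_t^n N_n$. Because the module $N_n$ itself varies with $n$, a single appeal to Huneke's uniform Artin-Rees theorem does not immediately suffice, and I expect this is where the real technical work lies. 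A cleaner alternative is to strengthen the inductive hypothesis to control simultaneously the $x_{j+1}^n$-torsion in $H^i_\fm(R/(x_1^n,\ldots,x_j^n))$ for all the relevant pairs $(i,j)$, so that the uniform bounds propagate through both cohomological degree and the length of the sequence via the same long exact sequence machinery used above. Once $A$ is secured, the displayed short exact sequence forces $\fm^{(C_{t-1}+A)n}\cdot H^0_\fm(R/I_n)=0$, closing the induction with $C=C_{t-1}+A$.
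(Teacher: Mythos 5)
Your reduction is set up correctly: powers of a filter regular sequence are again filter regular, the identity $x_t^nM_n\cap H^0_\fm(M_n)=x_t^nH^0_\fm(M_n)$ does hold, and the short exact sequence
\[
0\to H^0_\fm(M_n)/x_t^nH^0_\fm(M_n)\to H^0_\fm(R/I_n)\to\bigl(0:_{H^1_\fm(M_n)}x_t^n\bigr)\to 0
\]
is the right consequence of the long exact sequence. But the argument stops exactly where the theorem becomes nontrivial: you never produce a constant $A$ with $\fm^{An}\cdot\bigl(0:_{H^1_\fm(M_n)}x_t^n\bigr)=0$, and neither of your suggested remedies closes the gap. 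Uniform Artin--Rees does not apply because the modules $N_n=H^1_\fm(M_n)^\vee$ vary with $n$, as you note; and the proposed strengthened induction only relocates the same difficulty, since already in its base case it would demand a uniform containment of the shape $\fm^{An}N\subseteq x^nN$ for the fixed finitely generated module $N=H^i_\fm(\widehat R)^\vee$, which is false in general (take $N=R/(x)$) and is not forced by filter regularity of $x$. So this is a genuine gap, not a routine verification.

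The paper's proof sidesteps the dependence on $n$ by changing what is tracked through the very same long exact sequences: instead of powers of $\fm$, it tracks the fixed ideal $\fa=\fa_0(R)\cdots\fa_t(R)$, where $\fa_j(R)=\Ann H^j_\fm(R)$. The connecting-map argument gives $\fa_i(R/(x_1))\supseteq\fa_i(R)\fa_{i+1}(R)$, and iterating yields $\fa^{2^t}\cdot H^0_\fm(R/I^{[p^e]})=0$ for every $e$ --- one ideal, independent of $e$, annihilating all of these modules (Lemma~\ref{ann of lc}). The second ingredient (Lemma~\ref{m primary}), which is why one completes so that $R$ is the image of a Cohen--Macaulay ring and which rests on attached primes of local cohomology and their behavior under localization, is that $\fa+I$ is $\fm$-primary. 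Then $\fm^{C_1}\subseteq\fa^{2^t}+I$ gives $(\fm^{C_1})^{[p^e]}\subseteq\fa^{2^t}+I^{[p^e]}$, and the linear bound $\fm^{Cp^e}$ falls out of taking Frobenius powers of a single fixed containment rather than from any uniform Artin--Rees input. If you want to rescue your induction, this annihilator ideal $\fa$ is precisely the object your argument is missing: it is what controls the term $\bigl(0:_{H^1_\fm(M_n)}x_t^n\bigr)$ uniformly in $n$.
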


Understanding algebraic and cohomological properties of filter regular sequences yields the following characterization of lower local cohomology modules being $F$-nilpotent:

\begin{Theoremx}\label{Main theorem F-nilpotent filter regular sequence} Let $(R,\fm,k)$ be a local ring of dimension $d$ and of prime characteristic $p>0$ and let $t<d$. Then the following are equivalent:
\begin{enumerate}
\item $H^i_\fm(R)$ is $F$-nilpotent for all $i\leq t$.
\item For every filter regular sequence $x_1,\ldots,x_t$ we have 
\[
\bigcup_{n\in \NN} (x_1,\ldots, x_t):\fm^n\subseteq (x_1,\ldots,x_t)^F.
\]
\end{enumerate}
\end{Theoremx}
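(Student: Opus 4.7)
The plan is to establish the equivalence by induction on $t$. The base case $t=0$ is an immediate unpacking of the definitions: $\bigcup_n 0:\fm^n = H^0_\fm(R)$, and the inclusion $H^0_\fm(R)\subseteq 0^F$ is precisely the statement that the Frobenius action on $H^0_\fm(R)$ is nilpotent, which is $F$-nilpotence at level $0$.

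For the inductive step of $(1)\Rightarrow(2)$, assume $H^i_\fm(R)$ is $F$-nilpotent for all $i\leq t$ and let $y\in I:\fm^n$ with $I=(x_1,\ldots,x_t)$ filter regular. First I would invoke Theorem~\ref{Main theorem LC} to obtain a uniform constant $C$ with $\fm^{Cp^e}y^{p^e}\subseteq I^{[p^e]}$ for every $e$, which places $y^{p^e}+I^{[p^e]}$ in the $\fm$-torsion submodule $H^0_\fm(R/I^{[p^e]})$ with a uniform annihilator. Next, using the four-term exact sequences associated to multiplication by $x_j$ on $R/(x_1,\ldots,x_{j-1})$, whose left-hand kernels $((x_1,\ldots,x_{j-1}):x_j)/(x_1,\ldots,x_{j-1})$ are $\fm$-torsion because $x_j$ is filter regular on $R/(x_1,\ldots,x_{j-1})$, I would transport the class of $y$ in $H^0_\fm(R/I)$ successively upward through iterated connecting homomorphisms to a class $\eta\in H^t_\fm(R)$. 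The hypothesis that $H^t_\fm(R)$ is $F$-nilpotent gives $F^e(\eta)=0$ for some $e$; tracing the vanishing back down through the same diagram decomposes $y^{p^e}$ modulo $I^{[p^e]}$ into a sum of obstruction terms, each living in a saturation of the form $(x_1^{p^e},\ldots,x_{j-1}^{p^e}):\fm^\infty$ for some $j\leq t$. The inductive hypothesis applied to the filter regular subsequence $x_1,\ldots,x_{t-1}$, together with the $F$-nilpotence of $H^i_\fm(R)$ for $i<t$, places each such contribution into the Frobenius closure of the relevant smaller ideal, so after one additional Frobenius power the residual terms are absorbed and we conclude $y^{p^{e'}}\in I^{[p^{e'}]}$ for some $e'\geq e$.

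For the converse $(2)\Rightarrow(1)$, I would separately establish $F$-nilpotence of $H^i_\fm(R)$ for each $i\leq t$. The first step is to deduce from hypothesis (2) at length $t$ the analogous statement at every length $i\leq t$: given $y$ in the saturation of $(x_1,\ldots,x_i)$, extend the sequence to a filter regular sequence $x_1,\ldots,x_t$ (possible since $t<d$), apply (2) to obtain $y^{p^e}\in(x_1^{p^e},\ldots,x_t^{p^e})$, and use Theorem~\ref{Main theorem LC} applied to the truncation $x_1,\ldots,x_i$ together with the filter regularity of the tail to absorb the contributions of the extra generators into a higher Frobenius power of $(x_1,\ldots,x_i)$. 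Once the length-$i$ version of (2) is known, every $\eta\in H^i_\fm(R)$ can be represented as a class in the direct limit $\varinjlim_N\bigl((x_1^N,\ldots,x_i^N):\fm^\infty\bigr)/(x_1^N,\ldots,x_i^N)$ via the standard natural map from such saturation quotients into local cohomology for filter regular sequences, and the length-$i$ statement of (2) forces the chosen representative into $(x_1^{Np^e},\ldots,x_i^{Np^e})$ for some $e$, so $F^e(\eta)=0$ in the colimit.

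The main obstacle is the forward direction: the ascending chain of connecting homomorphisms only produces the class in $H^t_\fm(R)$ up to torsion obstructions supported in the lower $H^i_\fm(R)$, and resolving these obstructions cleanly while keeping the Frobenius powers uniformly bounded is the crux. Theorem~\ref{Main theorem LC} is the essential quantitative input that prevents the torsion errors from growing with $e$, and the inductive hypothesis on filter regular sequences of length $t-1$ is what allows the residual obstructions to be removed via additional Frobenius powers. The parallel length-reduction step in the reverse direction is more mechanical but again leans on Theorem~\ref{Main theorem LC} and on the preservation of filter regularity for the relevant truncated sequences.
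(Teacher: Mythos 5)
Your overall strategy --- peeling off one filter regular element at a time via long exact sequences of local cohomology, and using a direct-limit/Nagel--Schenzel identification for the converse --- is close in spirit to the paper's, but both directions have gaps. In $(1)\Rightarrow(2)$ you assert that when you trace $F^e(\eta)=0$ back down the chain of connecting homomorphisms, the obstruction terms all live in saturations of the form $(x_1^{p^e},\ldots,x_{j-1}^{p^e}):\fm^\infty$ and are therefore handled by the inductive hypothesis at length $t-1$. That is only true of the bottom-most obstruction. The intermediate obstructions are classes in $H^{j}_\fm\bigl(R/(x_1,\ldots,x_s)^{[p^e]}\bigr)$ with $j\geq 1$ and $s\geq 1$ (for $t=3$, for instance, one must kill a class of $H^1_\fm(R/(x_1^{N}))$ under relative Frobenius), and neither the saturation statement for shorter sequences nor the $F$-nilpotence of the $H^i_\fm(R)$ kills these directly; what is needed is the full relative statement that $H^j_\fm(R/(x_1,\ldots,x_s)^{[p^e]})$ is $F$-nilpotent with respect to $R$ for all $j+s\leq t$ and all $e$, proved by a nested induction on $s$ (this is part (1) of Theorem~\ref{general Deformation} in the paper). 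Your sketch works as written only for $t\leq 2$. Incidentally, Theorem~\ref{Main theorem LC} is not needed in this direction at all.

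The more serious gap is the length-reduction step in $(2)\Rightarrow(1)$. From $y\in(x_1,\ldots,x_i):\fm^\infty$ and $y^{p^e}\in(x_1^{p^e},\ldots,x_t^{p^e})$ you cannot ``absorb the contributions of the extra generators'': Theorem~\ref{Main theorem LC} bounds the annihilator of $H^0_\fm(R/(x_1,\ldots,x_i)^{[p^e]})$, but provides no mechanism for removing $x_{i+1},\ldots,x_t$ from an ideal membership. The natural attempt --- apply (2) to $(x_1,\ldots,x_i,x_{i+1}^n,\ldots,x_t^n)$ for every $n$ and intersect --- requires a Frobenius test exponent uniform in $n$, which is a \emph{consequence} of the theorem (via Theorem~\ref{fte theorem}), not an available input. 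The paper never performs this reduction: it instead climbs in cohomological degree while shortening the sequence, using the converse half of Theorem~\ref{general Deformation}, and this is precisely where Theorem~\ref{Main theorem LC} enters --- it guarantees $H^0_\fm\bigl(R/((x_1,\ldots,x_s)^{[p^e]}:x_{s+1}^{p^e})\bigr)=0$ after replacing $x_{s+1}$ by a fixed power, so that the $H^0$ row of the long exact sequence is left exact uniformly in $e$. Your final step for the converse (realizing $H^i_\fm(R)$ as a direct limit of saturation quotients via Lemma~\ref{nagel-shenzel}) is sound and is Proposition~\ref{frs imply nilpotent} in the paper, but it only becomes usable once the length-$s$ statements for $s\leq t$ are actually established.
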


The paper is organized as follows: Section~\ref{Preliminary Section} covers the basic notions and background material relevant to the results of later sections and develops a few new results concerning $F$-nilpotent rings. Section~\ref{LC section} is where we prove Theorem~\ref{Main theorem LC}. The proof of Theorem~\ref{Main theorem F-nilpotent filter regular sequence} and other characterizations of lower local cohomology modules to be $F$-nilpotent can be found in Section~\ref{Deformation Section}. The final section, Section~\ref{Section F-nilpotent}, is where we piece together the results of the previous sections and prove Theorem~\ref{Main theorem frobenius closure} and Theorem~\ref{Main theorem deformation}.

\section{Preliminaries}\label{Preliminary Section}

\subsection{Local cohomology}
Let $R$ be a Noetherian ring, not necessarily of prime characteristic, $M$ an $R$-module, and $I$ an ideal of $R$. Then we denote by $H^i_I(M)$ the i-th local cohomology module with support at $I$ (\cite{BS13} and \cite{ILL07}). Recall that $H^i_I(M)$ may be computed as the $i$th cohomology of the \v{C}ech complex
\[
\check{C}^\bullet(x_1,\ldots,x_t;M): 0 \to M \to \bigoplus_{i=1}^t M_{x_i} \to \cdots \to M_{x_1 \ldots x_t} \to 0
\]
where $x_1,\ldots,x_t$ are any choice of generators of $I$ up to radical. If $R\to S$ is a homomorphism of rings then for each $R$-module $M$ there is functorial map of local cohomology modules $H^i_I(M)\to H^i_{IS}(M\otimes_R S)$. In particular, if $R$ is of prime characteristic $p>0$, then the $e$th-iterate of the Frobenius endomorphism $F^e:R \to R$ induces the $e$th-Frobenius action $F^e:H^i_I(R) \to H^i_{I^{[p^e]}}(R) \cong H^i_{I}(R)$ with the isomorphism of local cohomology modules coming from the observation that the ideals $I$ and $I^{[p^e]}$ are the same up to radical.

If $c\in \NN$ then $(x_1,\ldots, x_t)$ is cofinal with $(x_1^c,\ldots,x_t^c)$ and $H^t_I(R)$ is isomorphic to the direct limit system
\[
\varinjlim_n \left( \cdots \to\frac{R}{(x^{cn}_1,\ldots, x^{cn}_t)}\xrightarrow{(x_1\cdots x_t)^c} \frac{R}{(x^{c(n+1)}_1,\ldots, x^{c(n+1)}_t)}\to \cdots\right).
\]
In particular, if $R$ is of prime characteristic then the Frobenius action on the top local cohomology module $H^t_I(R)$ has a the following explicit description: If $\eta+(x_1^m,\ldots, x_t^m)$ is a representative of an element in 
\[
H^t_I(R)\cong \varinjlim_n \frac{R}{(x_1^n,\ldots, x_t^n)}
\]
then the $e$th-Frobenius action on $H^t_I(R)$ sends $\eta+(x_1^m,\ldots, x_t^m)\mapsto \eta^{p^e}+(x_1^{p^em},\ldots, x_t^{p^em})$ in $H^t_I(R)$ as it is realized as the direct limit
\[
H^t_I(R)\cong \varinjlim_n \frac{R}{(x_1^{p^en},\ldots, x_t^{p^en})}.
\]

\subsection{Filter regular sequences}
\begin{definition}
Let $M$ be a finitely generated module over a local ring $(R,\fm,k)$ and let $x_1,\ldots,x_t$ be a set of elements of $R$. Then we say that $x_1,\ldots,x_t$ is a \textit{filter regular sequence} on $M$ if the following conditions hold:
\begin{enumerate}
\item $(x_1,\ldots,x_t) \subseteq \fm$.

\item $x_i \notin \mathfrak{p}$ for all $\fp \in \Ass_R\Big(\dfrac{M}{(x_1,\ldots,x_{i-1})M}\Big) \setminus \{\fm\},~i=1,\ldots,t$.
\end{enumerate}
\end{definition}

The notion of a filter regular sequence was introduced by Cuong, Schenzel, and Trung in \cite{STC78}. If $M$ is a finitely generated module over a local ring $(R,\fm,k)$ then a simple prime avoidance argument shows that there always exists a system of parameters for $M$ which is also a filter regular sequence on $M$. Filter regular sequences are also called \textit{$\fm$-filter regular sequence} in other sources.

\begin{lemma}
\label{filter}
Let $(R,\fm,k)$ be a local ring and $M$ a finitely generated $R$-module. Then $x_1,\ldots, x_t \in \fm$ is a filter regular sequence on $M$ if and only if one of the following four equivalent conditions holds:
\begin{enumerate}
\item
For each $1\leq i \leq t$ the quotient
\[
\dfrac{\big((x_1,\ldots,x_{i-1})M:_M x_i\big)}{(x_1,\ldots,x_{i-1})M}
\]
is an $R$-module of finite length.

\item
For each $1\leq i \leq t$ the sequence
\[
\frac{x_1}{1},\frac{x_2}{1},\ldots,\frac{x_i}{1}
\]
forms an $R_{\fp}$-regular sequence in $M_{\fp}$ for every $\fp \in \big(\Spec (R/(x_1,\ldots,x_i)) \cap \Supp_R M\big) \setminus \{\fm\}$.

\item
The sequence $x_1^{n_1},\ldots,x_t^{n_t}$ is a filter regular sequence for all $n_1, \ldots, n_t \ge 1$.

\item The sequence $x_1\ldots, x_t\in \widehat{R}$ is a filter regular sequence of $\widehat{M}$.

If $x_1,\ldots, x_t$ satisfies any of the above equivalent conditions then it also satisfies the following fifth condition:

\item For all  $n_1, \ldots, n_t \ge 1$ we have
\[
\mathrm{Ass}_R(M/(x_1, \ldots, x_t)M) \cup \{\fm\} = \mathrm{Ass}_R(M/(x_1^{n_1}, \ldots, x_t^{n_t})M) \cup \{\fm\}.
\]
\end{enumerate}
\end{lemma}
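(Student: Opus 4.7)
The plan is to establish (def) $\Leftrightarrow$ (1), (def) $\Leftrightarrow$ (2), (def) $\Leftrightarrow$ (4), and (def) $\Rightarrow$ (5), then observe that (5) immediately gives (def) $\Rightarrow$ (3) and that (3) $\Rightarrow$ (def) is trivial. Throughout, write $N_i := M/(x_1, \ldots, x_{i-1})M$, so the defining condition reads $x_i \notin \fp$ for all $\fp \in \Ass_R(N_i) \setminus \{\fm\}$.

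For (def) $\Leftrightarrow$ (1), the colon quotient is $0 :_{N_i} x_i$, which has finite length iff its support is contained in $\{\fm\}$; since this support is the set of primes $\fp \in \Supp(N_i)$ at which $x_i$ is a zerodivisor on $(N_i)_\fp$, the equivalence with the defining condition is immediate. For (def) $\Leftrightarrow$ (2), I would argue by induction on $i$: at any non-maximal $\fp$ containing $(x_1,\ldots,x_i)$ in $\Supp(M)$, the inductive hypothesis yields that $x_1, \ldots, x_{i-1}$ is $M_\fp$-regular, and then $x_i$ is a non-zerodivisor on $(N_i)_\fp$ iff $x_i$ lies in no non-maximal prime of $\Ass_R(N_i)$, since associated primes commute with localization. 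The equivalence (def) $\Leftrightarrow$ (4) comes from the standard formula
\[
\Ass_{\widehat R}(\widehat{N_i}) = \bigcup_{\fp \in \Ass_R(N_i)} \Ass_{\widehat R}(\widehat R / \fp \widehat R),
\]
combined with the fact that $\fm \widehat R$ is the unique prime of $\widehat R$ lying over $\fm$, so that non-maximal associated primes of $\widehat{N_i}$ lie exactly over non-maximal associated primes of $N_i$; since $x_i \in R$, it lies in such a prime upstairs iff it lies in the corresponding prime downstairs.

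The heart of the argument is (def) $\Rightarrow$ (5), for which the key claim I would prove is: if $x \in \fm$ is filter regular on a finitely generated $R$-module $N$, then for every $n \geq 1$,
\[
\Ass_R(N/x^n N) \setminus \{\fm\} = \Ass_R(N/xN) \setminus \{\fm\}.
\]
To prove this I localize at any non-maximal $\fp \in \Supp(N/xN)$, where $x$ becomes $N_\fp$-regular, and use the filtration $0 \subset x^{n-1}N_\fp/x^n N_\fp \subset \cdots \subset N_\fp/x^n N_\fp$ whose successive quotients are all isomorphic to $N_\fp/xN_\fp$ via multiplication by an appropriate power of $x$. The inclusion $\Ass((N/xN)_\fp) \subseteq \Ass((N/x^n N)_\fp)$ is read off from the bottom piece of this filtration; the reverse inclusion follows from the general fact that the associated primes of a filtered module lie in the union of the associated primes of the successive quotients. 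Iterating along $x_1, \ldots, x_t$ via induction on $t$ then delivers (5). Finally, (5) implies that the non-maximal associated primes of $M/(x_1^{n_1}, \ldots, x_{j-1}^{n_{j-1}})M$ coincide with those of $N_j$; since $x_j \notin \fp$ iff $x_j^{n_j} \notin \fp$ for any prime $\fp$, the defining condition for the sequence of powers follows from that for the original, which is (3).

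The main obstacle is the induction for (5): at each step one must know that the tail of the sequence remains filter regular on the relevant quotient after raising initial elements to powers, so that the key claim may be reapplied. Fortunately the key claim is precisely what guarantees this preservation, so the induction closes, but the interleaving of the induction on $t$ with the passage from a single element to a sequence is where the careful bookkeeping is required.
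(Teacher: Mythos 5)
Your route is genuinely different from the paper's, which simply cites \cite[Proposition~2.2]{NS94} for the equivalence of the definition with (1)--(3), \cite[Remark~2.4]{Q13} for (4), and settles (5) with a one-line observation; a self-contained argument is a reasonable thing to attempt, and your treatments of (def) $\Leftrightarrow$ (1), (def) $\Leftrightarrow$ (2), (def) $\Leftrightarrow$ (4), and the single-element key claim are all correct.

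The genuine gap is in the induction that upgrades the key claim to statement (5). The key claim compares $\Ass_R(N/x^nN)$ with $\Ass_R(N/xN)$ for a \emph{fixed} module $N$. In the inductive step for a sequence you must instead compare $\Ass_R(N/x_iN)$ with $\Ass_R(N'/x_iN')$ for the two \emph{different} modules $N=M/(x_1^{n_1},\ldots,x_{i-1}^{n_{i-1}})M$ and $N'=M/(x_1,\ldots,x_{i-1})M$. The inductive hypothesis $\Ass_R(N)\cup\{\fm\}=\Ass_R(N')\cup\{\fm\}$ does guarantee, as you note, that $x_i$ stays filter regular on $N$ --- but that is not where the difficulty lies: two modules with the same non-maximal associated primes need not have quotients by a common element with the same associated primes, so the key claim cannot simply be ``reapplied.'' Nor can you dodge this by reordering so that the powered element is always the last one quotiented by, because filter regular sequences are not permutable (over $R=k[[u,v]]$ with $M=R\oplus R/(v)$, the sequence $u,v$ is filter regular on $M$ but $v,u$ is not, since $v$ lies in the non-maximal associated prime $(v)$ of $M$). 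The repair is to localize all of (5) at the outset: a non-maximal $\fp$ can lie in $\Ass_R(M/(x_1^{n_1},\ldots,x_t^{n_t})M)$ only if $\fp\in V(x_1,\ldots,x_t)\cap\Supp_R M$, and there condition (2) makes $x_1,\ldots,x_t$ an honest $M_\fp$-regular sequence; for a regular sequence on a finitely generated module over a Noetherian local ring one has $\Ass(M_\fp/(x_1^{n_1},\ldots,x_t^{n_t})M_\fp)=\Ass(M_\fp/(x_1,\ldots,x_t)M_\fp)$ by exactly your filtration/short-exact-sequence argument applied one exponent at a time (legitimate now because regular sequences, unlike filter regular ones, are permutable and remain regular after taking powers). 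With (5) established this way, your deduction of (3) goes through.
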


\begin{proof}
Equivalence of the first three properties can be found in \cite[Proposition~2.2]{NS94}. For the equivalence of (4) with the first three properties we refer to \cite[Remark 2.4]{Q13}. Lastly, observe that the quotient module in condition (1) having finite length is unaffected by completion. Hence condition (5) is equivalent to the other four conditions.
\end{proof}

The following result will be useful to this paper (cf. \cite[Proposition~3.4]{NS94}). 


\begin{lemma}[Nagel-Schenzel isomorphism]
\label{nagel-shenzel}
Let $(R,\fm,k)$ be a local ring and let $M$ be a finitely generated $R$-module. If $x_1,\ldots,x_t$ is a filter regular sequence on $M$ then
\[
H^i_{\fm}(M) \cong
\begin{cases}
H^i_{(x_1,\ldots, x_t)}(M) &\text{ if } i<t\\
H^{i-t}_{\fm}(H^t_{(x_1, \ldots, x_t)}(M)) &\text{ if } i \ge t.
\end{cases}
\]
\end{lemma}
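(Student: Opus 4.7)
The plan is to apply the Grothendieck spectral sequence associated to the composition of functors $\Gamma_\fm = \Gamma_\fm \circ \Gamma_{(x_1,\ldots,x_t)}$, which is valid because $(x_1,\ldots,x_t) \subseteq \fm$ implies that an injective $R$-module remains $\Gamma_\fm$-acyclic after applying $\Gamma_{(x_1,\ldots,x_t)}$. This yields a convergent spectral sequence
\[
E_2^{p,q} = H^p_\fm\bigl(H^q_{(x_1,\ldots,x_t)}(M)\bigr) \Longrightarrow H^{p+q}_\fm(M),
\]
and both isomorphisms in the lemma will fall out once the shape of the $E_2$ page is understood.

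The central technical input is that $H^q_{(x_1,\ldots,x_t)}(M)$ is $\fm$-torsion (i.e., supported only at $\fm$) for every $q < t$. I would verify this by localizing at an arbitrary non-maximal prime $\fp$. If $(x_1,\ldots,x_t) \not\subseteq \fp$ then the localized ideal is the unit ideal and every local cohomology vanishes; if $(x_1,\ldots,x_t) \subseteq \fp$ and $\fp \in \Supp_R M$, then condition (2) of Lemma~\ref{filter} guarantees that $x_1,\ldots,x_t$ is an $R_\fp$-regular sequence on $M_\fp$, whence $H^q_{(x_1,\ldots,x_t)_\fp}(M_\fp) = 0$ for $q < t$ by the standard Koszul / \v{C}ech vanishing. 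Since local cohomology commutes with localization, this forces $\Supp H^q_{(x_1,\ldots,x_t)}(M) \subseteq \{\fm\}$, as required.

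With that in hand the $E_2$ page is transparent: for $q < t$, only the column $p = 0$ is nonzero, equal to $H^q_{(x_1,\ldots,x_t)}(M)$ itself (since any $\fm$-torsion module $N$ satisfies $H^0_\fm(N) = N$ and $H^p_\fm(N) = 0$ for $p > 0$); for $q > t$ the entire row vanishes by the cohomological dimension bound for an ideal generated by $t$ elements; and only row $q = t$ may have several nonzero entries. Inspecting the differentials $d_r \colon E_r^{p,q} \to E_r^{p+r,\,q-r+1}$ for $r \ge 2$, one sees that either the source sits at $p = 0$ in a row $q < t$, forcing the target to lie at column $p+r \ge 2$ in a row below $t$ and hence to be zero, or the target sits in row $q = t$ with source in row $q+r-1 > t$, hence zero; likewise the row $q=t$ has trivial outgoing differentials since the target rows below also have only column $p=0$ nonzero while $p+r \geq 2$. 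So the sequence degenerates at $E_2$, and the filtration on $H^i_\fm(M)$ has a unique nonzero subquotient: $E_\infty^{0,i} = H^i_{(x_1,\ldots,x_t)}(M)$ when $i < t$, and $E_\infty^{i-t,\,t} = H^{i-t}_\fm\bigl(H^t_{(x_1,\ldots,x_t)}(M)\bigr)$ when $i \ge t$. The main obstacle is really the support computation for the lower local cohomology modules; once that is established, the spectral sequence analysis is pure bookkeeping. An alternative would be induction on $t$ using the long exact local cohomology sequence attached to multiplication by $x_t$ on $M/\Gamma_{(x_1,\ldots,x_{t-1})}(M)$, but the spectral sequence route is cleaner and more conceptual.
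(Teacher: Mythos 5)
Your argument is correct, and it is worth noting that the paper itself supplies no proof of this lemma: it is quoted directly from Nagel--Schenzel (cited as \cite[Proposition~3.4]{NS94}), so there is no in-text argument to compare against. Your proof is the standard modern one and is complete: the composition $\Gamma_\fm = \Gamma_\fm\circ\Gamma_{(x_1,\ldots,x_t)}$ does hold because $(x_1,\ldots,x_t)\subseteq\fm$, the support computation for $H^q_{(x_1,\ldots,x_t)}(M)$ with $q<t$ is exactly where the filter regular hypothesis (condition (2) of Lemma~\ref{filter}) enters, the vanishing of rows $q>t$ follows from the \v{C}ech complex on $t$ elements, and your analysis of the differentials correctly shows degeneration at $E_2$ with a single nonzero term on each antidiagonal. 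One small imprecision in the justification of the spectral sequence's existence: the reason $\Gamma_{(x_1,\ldots,x_t)}(E)$ is $\Gamma_\fm$-acyclic for $E$ injective is not the containment of ideals but the fact that, over a Noetherian ring, $\Gamma_I$ of an injective module is again injective (as one sees from the decomposition of $E$ into copies of $E(R/\fp)$); the containment $(x_1,\ldots,x_t)\subseteq\fm$ is instead what guarantees the identity of composed functors $\Gamma_\fm\circ\Gamma_{(x_1,\ldots,x_t)}=\Gamma_\fm$. With that clarification your write-up would serve as a self-contained proof of the lemma, which the paper deliberately omits.
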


The Nagel-Schenzel's isomorphism allows us to identify $H^t_{\fm}(R)$ as the collection of elements of $H^t_{(x_1, \ldots, x_t)}(R)$ annihilated by a sufficiently large power of the maximal ideal. This is often useful since the local cohomology module $H^t_{(x_1, \ldots, x_t)}(R)$ has an explicit description via \v{C}ech cohomology. Moreover, if $R$ is of prime characteristic then the Frobenius action on $H^t_\fm(R)$ is the restriction of the Frobenius action on $H^t_{(x_1,\ldots, x_t)}(R)$ to $H^0_\fm(H^t_{(x_1,\ldots, x_t)}(R))$.

If $(R,\fm,k)$ is a local ring and $M$ a module with support only at the maximal ideal then $H^i_\fm(M)=0$ for all $i\geq 1$. In particular, if $x$ is a filter regular element of $R$ then $(0:x)$ is supported only at the maximal ideal and examination of the long exact sequence of local cohomology modules induced from the short exact sequence
\[
0\to (0:x) \to R\to R/(0:x)\to 0
\]
shows $H^i_\fm(R/(0:x))\cong H^i_\fm(R)$ for each $i\geq 1$. Furthermore, for each $i\geq 1$ the induced map of local cohomology modules $H^i_\fm(R)\cong H^i_\fm(R/(0:x))\to H^i_\fm(R)$ derived from the short exact sequence
\[
0\to R/(0:x)\xrightarrow{x} R\to R/(x)\to 0
\]
is multiplication by the element $x$.

Another useful property of filter regular elements is the following:

\begin{lemma}\label{0th local cohomology lemma} Let $(R,\fm,k)$ be a local ring and fix $N\in \NN$ large enough so that $H^0_\fm(R)= (0: \fm^N)$. If $x\in \fm^N$ is a filter regular element then $H^0_\fm(R/(0:x))=0$, i.e., $R/(0:x)$ has positive depth. 
\end{lemma}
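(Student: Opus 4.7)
The plan is to show directly that if an element $r \in R$ has its class annihilated by some power of $\fm$ in $R/(0:x)$, then $r \in (0:x)$. The key leverage will come from combining two facts: first, $H^0_\fm(R) = (0:\fm^N)$ is annihilated by $\fm^N$; second, that $(0:x^n) \subseteq H^0_\fm(R)$ for every $n\geq 1$ because $x$ filter regular forces $(0:x^n)$ to be supported only at $\fm$ (using Lemma~\ref{filter}(3), the powers $x^n$ remain filter regular, so $x^n$ is a non-zerodivisor on $R_\fp$ for every $\fp \neq \fm$).

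So suppose $r\in R$ satisfies $\fm^n r \subseteq (0:x)$ for some $n\geq 1$, i.e.\ $xr\fm^n = 0$. Then $xr\in H^0_\fm(R) = (0:\fm^N)$, meaning $xr\cdot y = 0$ for every $y\in \fm^N$. Applying this to $y = x$, which lies in $\fm^N$ by hypothesis, yields $x^2 r = 0$, that is, $r\in (0:x^2)$. Since $x^2$ is also filter regular, the ideal $(0:x^2)$ is supported only at $\fm$, hence $(0:x^2)\subseteq H^0_\fm(R) = (0:\fm^N)$. Thus $r\fm^N = 0$, and feeding $x\in \fm^N$ into this gives $rx = 0$, i.e.\ $r\in (0:x)$. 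This proves $H^0_\fm(R/(0:x)) = 0$.

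I do not foresee a serious obstacle; the argument is really just a two-step "bootstrap" where the hypothesis $x\in \fm^N$ is used precisely twice—once to produce $x^2 r = 0$ from $xr\in (0:\fm^N)$, and once to pass from $r\in (0:\fm^N)$ back to $r\in (0:x)$. The only thing worth being careful about is to invoke Lemma~\ref{filter}(3) so that $x^2$ inherits filter regularity, guaranteeing the crucial containment $(0:x^2)\subseteq H^0_\fm(R)$.
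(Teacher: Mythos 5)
Your argument is correct. It proves the statement by a direct element chase: you show that any $r$ whose class in $R/(0:x)$ is killed by a power of $\fm$ already lies in $(0:x)$, bootstrapping twice through the hypothesis $x\in\fm^N$. The paper instead runs the long exact sequence of local cohomology attached to $0\to(0:x)\to R\to R/(0:x)\to 0$: it observes that $H^0_\fm((0:x))\to H^0_\fm(R)$ is an isomorphism (because $x\in\fm^N$ forces $H^0_\fm(R)=(0:\fm^N)\subseteq(0:x)$) and that $H^1_\fm((0:x))=0$ since $(0:x)$ has finite length, whence $H^0_\fm(R/(0:x))=0$. Both proofs ultimately rest on the same two facts --- $(0:x)\subseteq H^0_\fm(R)$ by filter regularity, and $H^0_\fm(R)\subseteq(0:x)$ because $x\in\fm^N$ --- so the difference is one of packaging: the homological route is slightly slicker and generalizes immediately to statements about higher local cohomology of $R/(0:x)$, while your version is self-contained and makes the role of the hypothesis $x\in\fm^N$ completely explicit. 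Your appeal to Lemma~\ref{filter}(3) to get $(0:x^2)\subseteq H^0_\fm(R)$ is the right way to justify the one containment that is not immediate.
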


\begin{proof} Consider the induced long exact sequence of local cohomology modules induced from the short exact sequence
\[
0\to (0:x)\to R\to R/(0:x)\to 0.
\]
Observe that $H^0_\fm((0:x))\to H^0_\fm(R)$ is an isomorphism since every element of $H^0_\fm(R)=(0:\fm^C)$ is an element of $(0:x)$. Furthermore, $H^1_\fm((0:x))=0$ since $(0:x)$ has finite length and therefore $H^0_\fm(R/(0:x))=0$.
\end{proof}

\subsection{Tight closure and Frobenius closure}
 Let $R$ be a Noetherian ring of prime characteristic $p>0$. We let $F^e_*(R)$ denote the $R$-bimodule which as an Abelian group and as a right $R$-module is $R$, but as a left module $F^e_*(R)$ is the module obtained by restricting scalars under $F^e:R\to R$. Given $x\in R$ we let $F^e(x)$ denote the corresponding element in $F^e_*(R)$. Thus given $c\in R$ we have $F^e_*(x)c=F^e_*(xc)$ and $cF^e_*(x)=F^e_*(c^{p^e}x)$. The ring $R$ is $F$-finite if $F^e_*(R)$ is a finite left $R$-module for some, equivalently for all, $e\in \NN$.

\begin{definition}[\cite{HH90,HH94,H96}] Let $\displaystyle R^{\circ} = R \setminus \bigcup_{\fp \in \mathrm{Min}R} \fp$ and $I\subseteq R$ an ideal of $R$.
\begin{enumerate}
  \item The Frobenius closure of $I$ is the ideal 
  \[
  I^F = \{x \mid  x^{p^e} \in I^{[p^e]} \text{ for some } e\in \NN\}.
  \]
  \item The tight closure of $I$ is the ideal 
  \[
  I^* = \{x \mid cx^{p^e} \in I^{[p^e]} \text{ for some } c \in R^{\circ} \text{ and for all } e \gg 0\}.
  \]
\end{enumerate}
\end{definition}
An element $x \in I^F$ if and only if $x$ is in the kernel of the composition of maps
 $$R \to R/I  \xrightarrow{\mathrm{id} \otimes F^e} R/I \otimes F_*^e(R)$$
 for some $e \ge 0$. Similarly, an element $x \in I^*$ if and only if it contained in the kernel of the composition
\[
R \to R/I \to R/I \otimes F_*^e(R)\xrightarrow{\mathrm{id} \otimes F_*^e(c)} R/I \otimes F_*^e(R)
\] for some $c \in R^{\circ}$ and for all $e \gg 0$. In general, let $N$ be a submodule of an $R$-module $M$. The Frobenius closure of $N$ in $M$, denoted by $N^F_M$, is the collection of elements in $M$ which lie in the kernel of the composition
\[
M\to M/N\xrightarrow{\id\otimes F^e} M/N\otimes_R F^e_*(R)
\]
for some $e\geq 0$. The tight closure of $N$ in $M$, denoted by $N^*_M$, is the collection of elements in $M$ which lie in the kernel of the composition
 \[
 M \to M/N \xrightarrow{\id\otimes F^e} M/N \otimes F_*^e(R) \xrightarrow{\id \otimes F_*^e(c)} M/N \otimes F_*^e(R)
 \]
for some $c \in R^{\circ}$ and for all $e \gg 0$. Both $N^F_M$ and $N^*_M$ are submodules of $M$ and there are containments 
 \[
 N\subseteq N^F_M\subseteq N^*_M\subseteq M.
 \]
Given an $R$-module $M$, $m\in M$, and $c\in R$ we will denote by $cm^{p^e}$ the element of $M\otimes F^e_*(R)$ which is mapped to by $m$ under the composition of maps
\[
M \xrightarrow{\id \otimes F^e}M\otimes F^e_*(R)\xrightarrow{\id\otimes F^e_*(c)} M\otimes F^e_*(R).
\]

\begin{remark}\label{direct system} Let $(R,\fm,k)$ be a local ring of prime characteristic $p>0$, $x_1, \ldots, x_t$ a sequence of elements in $\fm$, and $I = (x_1, \ldots, x_t)$.
 \begin{enumerate}
\item The $e$th-Frobenius action on $H^i_I(R)$ becomes a left $R$-module homomorphism $F^e: H^i_I(R) \to H^i_I(F^e_*(R))$ for all $i \ge 0$.    
 \item For each $e\in \NN$ there is an isomorphism of left $R$-modules $H^t_I(R) \otimes_R F^e_*(R) \cong H^t_I(F^e_*(R))$ (this is not true in general for lower degree local cohomology modules). If we identify the later with $H^t_I(R)$, then the map $H^t_I(R)  \xrightarrow{\mathrm{id} \otimes F^e} H^t_I(R) \otimes F^e_*(R)$ is the $e$th-Frobenius action on $H^t_I(R)$ . In particular, the nilpotent submodule of $H^t_I(R)$ is simply $0^{F}_{H^t_\fm(R)}$, the Frobenius closure of the $0$-submodule.

\item If $(R,\fm,k)$ is any local ring of prime characteristic $p>0$, $I\subseteq R$ an ideal, and $x_1,\ldots, x_t$ any choice of generators of $I$, then
\[
\lim_{\longrightarrow}\, \frac{(x_1^n, \ldots, x_t^n)^F}{(x_1^n, \ldots, x_t^n)} \cong 0^F_{H^t_I(R)}.
\]
See \cite[Proposition~3.3]{HQ18} for a proof (cf. Lemma~\ref{Direct limit of Frobenius closures lemma} below).

\item If $R$ is an excellent equidimensional local ring and $x_1,\ldots, x_t$ part of a system of parameters then
\[
\varinjlim_n\, \frac{(x_1^n, \ldots, x_t^n)^*}{(x_1^n, \ldots, x_t^n)} \cong 0^*_{H^t_I(R)}.
\]
See \cite[Propsition~3.3]{S94} for a proof (cf. Lemma~\ref{Direct limit of tight closures lemma} below).
\end{enumerate}
\end{remark}

An element $c\in R^{\circ}$ is called a test element if for all $R$-modules $N\subseteq M$ if $\eta\in N^*_M$ then for all $e\in \NN$ the element $\eta$ is an element of the kernel 
\[
M\to M/N\xrightarrow{\id\otimes F^e} M/N\otimes_R F^e_*(R)\xrightarrow{\id\otimes F^e_*(c)}M/N\otimes_R F^e_*(R).
\]
If $(R,\fm,k)$ is reduced and excellent then $R$ admits a test element. Moreover, we may assume $c\in R^\circ$ is a completely stable test element, i.e., $c$ is also serves as a test element for $\widehat{R}$.

\subsection{\texorpdfstring{$F$}{1}-nilpotent rings}

Let $(R,\fm,k)$ be a local ring of dimension $d$ and prime characteristic $p>0$. We say that a local cohomology module $H^i_I(R)$ is $F$-nilpotent if the Frobenius action on $H^i_I(R)$ is nilpotent. If $I=\fm$ is the maximal ideal then the modules $H^i_\fm(R)$ are Artinian and it follows that there is an $e\in \NN$ such that $\mbox{Span}_R\{\im(F^e)\}=\mbox{Span}_R\{\im(F^{e+1})\}=\cdots$. In particular, if $H^i_\fm(R)$ is $F$-nilpotent then $F^e:H^i_\fm(R)\to H^i_\fm(R)$ is the $0$-map  for all $e\gg 0$ (cf. \cite[Proposition~1.11]{HS77} and \cite[Proposition~4.4]{L97}). The ring $R$ is said to be $F$-nilpotent if $H^i_\fm(R)$ is $F$-nilpotent for all $i<d$ and $0^F_{H^d_\fm(R)}=0^*_{H^d_\fm(R)}$, i.e., the Frobenius action on $H^d_\fm(R)$ is nilpotent when restricted to the tight closure of the $0$-submodule, i.e., $0^{*}_{H^d_\fm(R)}=0^{F}_{H^d_\fm(R)}$.

The study of $F$-nilpotent rings is predominate in \cite{BB05} and \cite{ST17}. The authors of \cite{BB05} make an explict relation between the Lyubeznik numbers of a closed point $x$ in a variety $X$ defined over a seperably closed field $k$ of prime characteristic $p>0$ which is assumed to be $F$-nilpotent off $x$ and \'{e}tale cohomology groups of $X$ with coefficients in $\mathbb{F}_p$. The authors of \cite{ST17} propose a geometric interpretation of $F$-nilpotent singularities. Given isolated normal singularity $x\in X$ over $\mathbb{C}$, the authors of \cite{ST17} conjecture that a Hodge theoretic condition on the singularity $x$ is equivalent to the reduction mod $p$ of $X$ being $F$-nilpotent for almost all primes $p$. They verify their conjecture up to dimension $3$. 

This paper is concerned with algebraic and cohomological properties of $F$-nilpotent rings. We continue by discussing a well-known fact.

\begin{remark}\label{Frobenius action is not nilpotent} Suppose $x_1,\ldots, x_d$ is a system of parameters of $R$. The element $1+(x_1,\ldots, x_d)$ is a nonzero element of $H^d_\fm(R)\cong \varinjlim R/(x_1^N,\ldots, x_d^N)$ by the Monomial Conjecture.\footnote{The Monomial Conjecture is equivalent to the Direct Summand Conjecture. The reader can find proofs of these conjectures in the case $R$ contains a field in \cite{H75}. We also refer the reader to \cite{A16, B16} for proofs of the Direct Summand Conjecture for the case that $R$ does not contain a field.} The $e$th Frobenius action on $H^d_\fm(R)$ maps $1+(x_1,\ldots,x_d)\mapsto 1+(x_1^{p^e},\ldots, x_d^{p^e})$, which is also a nonzero element of $H^d_\fm(R)\cong \varinjlim R/(x_1^{p^eN},\ldots, x_d^{p^eN})$. Thus the Frobenius action on $H^d_\fm(R)$ is never nilpotent. 
\end{remark}

\begin{proposition}\label{Basic Proposition about F-nilpotent Rings}
Let $(R,\fm,k)$ be a local ring of prime characteristic $p>0$.
\begin{enumerate}
\item If $R$ has dimension $0$ then $R$ is $F$-nilpotent.
\item The ring $R$ is $F$-nilpotent if and only if $R/\sqrt{0}$ is $F$-nilpotent.
\item If $R$ is $F$-nilpotent then $R$ is equidimensional.
\item If $R$ is excellent then $R$ is $F$-nilpotent if and only if $\widehat{R}$ is $F$-nilpotent.
\end{enumerate}
\end{proposition}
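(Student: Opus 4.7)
For (1), the plan is a direct computation: when $\dim R=0$, one has $H^0_\fm(R)=R$, the maximal ideal $\fm=\sqrt 0$ is nilpotent, and $R^\circ=R\setminus\fm$ consists of units. Consequently $r^{p^e}=0$ for every $r\in\fm$ once $e$ is sufficiently large, and unwinding the definitions gives $0^F_R=\fm=0^*_R$.

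For (2), set $N=\sqrt 0$ and fix $k$ with $N^k=0$. For $p^e\geq k$ the Frobenius $F^e$ vanishes on $N$, hence on each $H^i_\fm(N)$. The Frobenius-equivariant long exact sequence of local cohomology attached to $0\to N\to R\to R/N\to 0$ therefore exhibits the kernel and cokernel of every comparison $H^i_\fm(R)\to H^i_\fm(R/N)$ as Frobenius-nilpotent subquotients of $H^j_\fm(N)$. Since $\dim R=\dim R/N$, this transfers Frobenius-nilpotency for $i<d$ in both directions. At the top degree the comparison $H^d_\fm(R)\twoheadrightarrow H^d_\fm(R/N)$ is surjective with Frobenius-nilpotent kernel, yielding a bijection $0^F_{H^d_\fm(R)}\leftrightarrow 0^F_{H^d_\fm(R/N)}$; for the tight-closure submodules one additionally uses that $\mathrm{Min}(R)\leftrightarrow\mathrm{Min}(R/N)$ naturally, so that $R^\circ\to(R/N)^\circ$ is surjective with liftable elements, giving $0^*_{H^d_\fm(R)}\leftrightarrow 0^*_{H^d_\fm(R/N)}$ as well. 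Hence $R$ is $F$-nilpotent if and only if $R/N$ is.

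For (3), reduce to $R$ reduced by (2) and suppose for contradiction that $R$ is not equidimensional: pick $\fp\in\mathrm{Min}(R)$ with $d':=\dim R/\fp<d$, and set $\fa=\bigcap_{\fq\in\mathrm{Min}(R),\,\fq\ne\fp}\fq$, so that $\fa\cdot\fp=0$ and $\fa\not\subseteq\fp$. Applied to the domain $R/\fp$, Remark~\ref{Frobenius action is not nilpotent} produces a nonzero $[1]\in H^{d'}_\fm(R/\fp)$ with $F^e([1])\ne 0$ for every $e\geq 0$. The plan is to propagate this non-nilpotency back to $R$. Choose a filter regular system of parameters $y_1,\dots,y_d$ of $R$ whose first $d'$ entries reduce modulo $\fp$ to a system of parameters of $R/\fp$. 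Using $\fa\cdot\fp=0$, locate an element $y\in\fp$ whose class lies in the socle of $R/(y_1,\dots,y_{d-1})$ and which has the Frobenius-persistence property that $y^{p^e}$ remains in the socle of $R/(y_1^{p^e},\dots,y_{d-1}^{p^e})$ for every $e$. Lemma~\ref{nagel-shenzel} (or an iterated long exact sequence for multiplication by the $y_i$) then lifts these socle elements to a Frobenius-non-nilpotent element of some $H^i_\fm(R)$ with $i<d$, contradicting the $F$-nilpotence of $R$. The main technical obstacle is the explicit construction of this socle element and the careful tracking of Frobenius through the boundary maps; the essential mechanism is visible in the example $R=k[[x,y,z]]/(xy,xz)$ with $\fp=(y,z)$, $\fa=(x)$, and socle element $y\in R/(x+y)$, where a direct check produces non-nilpotency on $H^1_\fm(R)$.

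For (4), the Frobenius-equivariant isomorphism $H^i_\fm(R)\cong H^i_{\hat\fm}(\widehat R)$ coming from faithfully flat base change transfers Frobenius-nilpotency for each $i<d$ and matches the $0^F$-submodules at the top degree. For the $0^*$-submodule at the top, excellence of $R$ guarantees a completely stable test element $c\in R^\circ\cap\widehat R^\circ$ that tests tight closure for both rings; then $\eta\in 0^*_{H^d_\fm(R)}$ if and only if $cF^e(\eta)=0$ for $e\gg 0$ if and only if $\eta\in 0^*_{H^d_\fm(\widehat R)}$ under the isomorphism, so $0^*_{H^d_\fm(R)}\leftrightarrow 0^*_{H^d_\fm(\widehat R)}$. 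Combining, $R$ is $F$-nilpotent if and only if $\widehat R$ is.
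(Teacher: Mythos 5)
Your arguments for (1), (2), and (4) are correct and follow essentially the same route as the paper: (1) is the observation $H^0_\fm(R)=R$ and $0^*_R=\sqrt{0}$; (2) is the Frobenius-equivariant long exact sequence for $0\to\sqrt{0}\to R\to R/\sqrt{0}\to 0$ together with the fact that $F^e$ kills $\sqrt{0}$ for $e\gg 0$; (4) is flat base change plus a completely stable test element. One small omission in (4): test elements are furnished for \emph{reduced} excellent rings, so you should first pass to $R_{red}$ via (2) (and use $\widehat{R}_{red}\cong(\widehat{R_{red}})_{red}$) before invoking the completely stable test element, as the paper does.

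Part (3), however, has a genuine gap. Your plan is to start from the non-nilpotent class $[1]\in H^{d'}_\fm(R/\fp)$ and \emph{lift} it to a non-nilpotent element of some lower $H^i_\fm(R)$ by constructing a socle element $y\in\fp$ of $R/(y_1,\dots,y_{d-1})$ whose powers $y^{p^e}$ remain in the socle of $R/(y_1^{p^e},\dots,y_{d-1}^{p^e})$ for all $e$, and then tracking it through boundary maps. This construction is never carried out, and it is precisely the hard part: there is no natural Frobenius-equivariant map from $H^{d'}_\fm(R/\fp)$ back into $H^i_\fm(R)$, and the ``Frobenius-persistence'' of a socle representative is exactly the kind of statement that requires proof (compare Example~\ref{non-deformation example}, where a socle element fails to behave stably under Frobenius powers). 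The argument as written is a program, not a proof. The clean fix is to reverse the direction of the transfer: with $P_1=\fp$ and $I=P_2\cap\cdots\cap P_\ell$ the intersection of the remaining minimal primes, the short exact sequence $0\to R\to R/P_1\oplus R/I\to R/(P_1+I)\to 0$ is Frobenius-compatible, and since $\dim R/(P_1+I)<i:=\dim R/P_1$, Grothendieck vanishing makes $H^i_\fm(R)\to H^i_\fm(R/P_1)$ \emph{surjective}. Nilpotence of Frobenius on $H^i_\fm(R)$ (which holds because $i<d$ and $R$ is $F$-nilpotent) then passes to the quotient $H^i_\fm(R/P_1)$, which is the top local cohomology of the local ring $R/P_1$ and hence never $F$-nilpotent by Remark~\ref{Frobenius action is not nilpotent} --- a contradiction. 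This surjection argument replaces your entire socle construction.
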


\begin{proof}
If $R$ is of dimension $0$ then $H^0_\fm(R)=R$ and $0^*_{R}=\sqrt{0}$. 

For $(2)$ we let $\underline{x}=x_1,\ldots, x_d$ be a system of parameters for $R$. Then the Frobenius endomorphism induces a commutative diagram of short exact sequences of \v{C}ech complexes:
\[
\begin{CD}
0@>>>\check{C}^\bullet(\underline{x};\sqrt{0})@>>> \check{C}^\bullet(\underline{x};R) @>>> \check{C}^\bullet(\underline{x};R/\sqrt{0})@>>>0  \\
@.@VVF^{e}V @VVF^{e}V @VVF^{e}V@. \\
0@>>>\check{C}^\bullet(\underline{x};\sqrt{0})@>>>  \check{C}^\bullet(\underline{x};R) @>>>   \check{C}^\bullet(\underline{x};R/\sqrt{0})@>>>0. \\
\end{CD}
\]
Observe that $F^e:\check{C}^\bullet(\underline{x};\sqrt{0})\to \check{C}^\bullet(\underline{x};\sqrt{0})$ is the $0$-map for $e\gg 0$. It easily follows that for $0\leq i <d$ that $H^i_\fm(R)$ is $F$-nilpotent if and only if $H^i_\fm(R/\sqrt{0})$ is $F$-nilpotent. It remains to show $0^F_{H^d_\fm(R)}=0^*_{H^d_\fm(R)}$ if and only if $0^F_{H^d_\fm(R/\sqrt{0})}=0^*_{H^d_\fm(R/\sqrt{0})}$. Suppose first that $0^F_{H^d_\fm(R)}=0^*_{H^d_\fm(R)}$ and let $\eta\in 0^*_{H^d_\fm(R/\sqrt{0})}$. Denote by $\iota$ and $\pi$ the following maps of local cohomology modules:
\[
H^d_\fm(\sqrt{0})\xrightarrow{\iota}H^d_\fm(R)\xrightarrow{\pi}H^d_\fm(R/\sqrt{0}).
\]
The map $\pi$ is onto and so there exists $\tilde{\eta}\in H^d_\fm(R)$ so that $\pi(\tilde{\eta})=\eta$. The commutative diagram of \v{C}ech complexes induces the following commutative diagram of local cohomology modules:
\[
\begin{CD}
H^d_\fm(\sqrt{0})@>\iota>>H^d_\fm(R) @>\pi>> H^d_\fm(R/\sqrt{0})@>>>0  \\
@VVF^{e}V @VVF^{e}V @VVF^{e}V@. \\
H^d_\fm(\sqrt{0})@>\iota>> H^d_\fm(R) @>\pi>>   H^d_\fm(R/\sqrt{0})@>>>0. \\
\end{CD}
\]
It follows that there is a $c\in R^{\circ}$ so that $c\tilde{\eta}^{p^e}\in \im (\iota)$ for all $e\gg0$. However, if $e_0\in \NN$ is chosen such that $\sqrt{0}^{[p^{e_0}]}=0$ then $F^{e_0}(\gamma)=0$ for all $\gamma\in H^d_\fm(\sqrt{0})$. A simple diagram chase then shows $c^{p^{e_0}}\tilde{\eta}^{p^{e+e_0}}=0$ for all $e\in \NN$, i.e., $\tilde{\eta}\in 0^*_{H^d_\fm(R)}$. But we are assuming $R$ is $F$-nilpotent, hence for $e\gg 0$ we have $\tilde{\eta}^{p^e}=0$ and another diagram chase shows $\eta\in 0^F_{H^d_\fm(R/\sqrt{0})}.$ We leave it to the reader to run a similar argument proving if $0^F_{H^d_\fm(R/\sqrt{0})}=0^*_{H^d_\fm(R/\sqrt{0})}$ then $0^F_{H^d_\fm(R)}=0^*_{H^d_\fm(R)}$.

To prove $(3)$ we may assume $R$ is reduced. Let $\{P_1,\cdots, P_\ell\}$ be the minimal primes of $R$ and suppose for a contradiction that $\dim(R/P_1)=i<\dim(R)$. Let $I=P_2\cap \cdots \cap P_\ell$. Consider the short exact sequence
\[
0\to R\to \frac{R}{P_1}\oplus \frac{R}{I}\to \frac{R}{(P_1+I)}\to 0.
\]
It follows that there is an onto map $H^i_\fm(R)\to H^i_\fm(R/P_1)$ of local cohomology modules since $\dim (R/(P_1+I))<i$. A straightforward diagram chase proves $H^i_\fm(R/P_1)$ is $F$-nilpotent, a contradiction since the top local cohomology module of a local ring cannot be $F$-nilpotent.

For $(4)$ we begin by recalling $H^i_\fm(R)\cong H^i_\fm(R)\otimes_R \widehat{R}\cong H^i_{\fm \widehat{R}}(\widehat{R})$ for all $i\in \NN$. Suppose first $\widehat{R}$ is $F$-nilpotent. Clearly $H^i_\fm(R)$ are $F$-nilpotent for all $i<\dim(R)$. It remains to show $0^{*_R}_{H^d_{\fm(R)}}=0^{F_R}_{H^d_{\fm}(R)}$. But this is also clear since $R^\circ\subseteq \widehat{R}^{\circ}$, hence $0^{*_R}_{H^d_\fm(R)}\subseteq 0^{*_{\widehat{R}}}_{H^d_\fm(R)}=0^{F_{\widehat{R}}}_{H^d_\fm(R)}$ and therefore a large iterate of the Frobenius action on $H^d_\fm(R)$ maps $\eta$ to $0$.

Conversely, suppose that $R$ is $F$-nilpotent. To ease notation, given a ring $S$ we write $S_{red}$ to denote $S/\sqrt{0}$. To show $\widehat{R}$ is $F$-nilpotent it is enough to show $\widehat{R}_{red}$ is $F$-nilpotent. Observe $\widehat{R}_{red}\cong (\widehat{R_{red}})_{red}$ and so we instead prove $\widehat{R_{red}}$ is $F$-nilpotent. The ring $R_{red}$ is $F$-nilpotent by $(2)$. Therefore we may assume $R$ is an excellent reduced ring. All lower local cohomology modules of $\widehat{R}$ are $F$-nilpotent by assumption and it remains to show $0^{*_{\widehat{R}}}_{H^d_\fm(R)}=0^{F_{\widehat{R}}}_{H^d_\fm(R)}$. Let $\eta\in 0^{*_{\widehat{R}}}_{H^d_\fm(R)}$. The ring $R$ admits a complete stable test element and therefore $\eta$ is an element of $0^{*_R}_{H^d_\fm(R)}$ as well. By assumption, a large enough iterate of the Frobenius action on $H^d_\fm(R)$ must map $\eta$ to $0$. 
\end{proof}

\begin{remark} The proof of (3) of Proposition~\ref{Basic Proposition about F-nilpotent Rings} did not fully use the hypothesis that $R$ is $F$-nilpotent. The proof only required that the Frobenius actions on $H^{i}_\fm(R)$ were nilpotent for $i<\dim(R)$. Using the language of \cite[Section~4]{Lyubeznik2006}, if the $F$-depth of a local ring $R$ is equal to the dimension of $R$, then $R$ is necessarily an equidimensional local ring.
\end{remark}

\section{On the complexity of Frobenius powers of ideals}\label{LC section}
Let $(R,\fm,k)$ be a local ring of prime characteristic $p>0$. An ideal $I\subseteq R$ is said to satisfy condition (LC) if there is an integer $C$ such that for each $e\in \NN$
\[
\fm^{Cp^e}\cdot H^0_\fm(R/I^{[p^e]})=0.
\]
Showing every ideal satisfies condition (LC) would have an important application. If $R$ is assumed to be weakly $F$-regular, i.e., every ideal is tightly closed, and every ideal of $R$ satisfies condition (LC), then every localization of $R$ would remain weakly $F$-regular. A ring whose localizations are weakly $F$-regular is called $F$-regular. We refer the reader to the discussion following \cite[Proposition~4.16]{HH90} or the discussion following \cite[Corollary~3.2]{H00} for further details on why every weakly $F$-regular ring satisfying condition (LC) is $F$-regular. 


\begin{conjecture}[(LC) Conjecture]\label{LC conj} Let $(R, \fm, k)$ be a local (graded) ring of prime characteristic $p>0$ and $I$ a (homogeneous) ideal of $R$. Then there exists a positive integer $C$ such that 
$$\fm^{Cp^e}\cdot H^0_{\fm}(R/I^{[p^e]}) = 0$$
for all $e \ge 0$.
\end{conjecture}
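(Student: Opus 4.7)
The plan is induction on $t$, the length of the filter regular sequence. The base case $t=0$ is immediate: $H^0_\fm(R/(0)) = H^0_\fm(R)$ has finite length, so there is some $C_0$ with $\fm^{C_0} H^0_\fm(R) = 0$, and therefore $\fm^{C_0 p^e} H^0_\fm(R) = 0$ for all $e$.

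For the inductive step, set $J = (x_1, \ldots, x_{t-1})$. By Lemma~\ref{filter}(3), the sequence $x_1^{p^e}, \ldots, x_t^{p^e}$ remains filter regular, so $N_e := (J^{[p^e]} :_R x_t^{p^e})/J^{[p^e]}$ is a finite-length submodule of $\Gamma_\fm(R/J^{[p^e]})$. Applying $\Gamma_\fm$ to the short exact sequence
\[
0 \to (R/J^{[p^e]})/N_e \xrightarrow{x_t^{p^e}} R/J^{[p^e]} \to R/I^{[p^e]} \to 0,
\]
and using that the higher local cohomology of the finite-length module $N_e$ vanishes, yields the four-term exact sequence
\[
\Gamma_\fm(R/J^{[p^e]})/N_e \xrightarrow{x_t^{p^e}} \Gamma_\fm(R/J^{[p^e]}) \to \Gamma_\fm(R/I^{[p^e]}) \to \bigl(0 :_{H^1_\fm(R/J^{[p^e]})} x_t^{p^e}\bigr) \to 0.
\]
The inductive hypothesis applied to $x_1, \ldots, x_{t-1}$ provides $C_{t-1}$ with $\fm^{C_{t-1} p^e} \Gamma_\fm(R/J^{[p^e]}) = 0$, which bounds the image of $\Gamma_\fm(R/J^{[p^e]})$ in $\Gamma_\fm(R/I^{[p^e]})$. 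Thus the problem reduces to exhibiting a constant $C'$ such that $\fm^{C' p^e} \cdot (0 :_{H^1_\fm(R/J^{[p^e]})} x_t^{p^e}) = 0$ for all $e$.

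To obtain this uniform bound I would reduce to the complete case via Lemma~\ref{filter}(4) and invoke Cohen's structure theorem to write $R = S/\fa$ with $(S, \fn)$ a complete regular local ring of characteristic $p$. Local duality over the Gorenstein ring $S$ identifies the Matlis dual (over $S$) of $H^1_\fm(R/J^{[p^e]})$ with an $\Ext$-module over $S$, and the $x_t^{p^e}$-torsion dualizes to the corresponding quotient by $x_t^{p^e}$. Since the Peskine–Szpiro functor $\mathbb{F}_S$ is exact by Kunz's theorem, and since $x_t$ is filter regular on $R/J$, the module $(0 :_{H^1_\fm(R/J)} x_t)$ is a quotient of the finite-length module $\Gamma_\fm(R/I)$ and hence has finite length. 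Combining these, the relevant quotient of $H^1_\fm(R/J^{[p^e]})^\vee$ is identified, up to a controllable discrepancy, with $\mathbb{F}_S^e$ applied to a fixed finite-length $S$-module; and a finite-length $S$-module with annihilator containing $\fn^n$ has its $\mathbb{F}_S^e$-twist annihilated by $(\fn^n)^{[p^e]} \supseteq \fn^{(n + \mu(\fn)) p^e}$. Pulling back to $R$ gives the desired $\fm^{C' p^e}$-bound.

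The principal technical obstacle is the careful analysis of the discrepancy between $R/J^{[p^e]} = S/(\fa + J^{[p^e]} S)$ and $\mathbb{F}_S^e(R/J) = S/(\fa^{[p^e]} + J^{[p^e]} S)$: these modules differ by $(\fa + J^{[p^e]} S)/(\fa^{[p^e]} + J^{[p^e]} S)$, and tracking this discrepancy through the $\Ext_S$-computation so that it contributes only a uniformly bounded additional annihilator is the core difficulty of the argument.
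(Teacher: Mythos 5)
First, a point of scope: the statement you were given is stated in the paper as an open \emph{conjecture} for arbitrary (homogeneous) ideals; the paper only proves the special case in which $I$ is generated by a filter regular sequence (Theorem~\ref{general Katzman}). Your argument assumes filter regularity from its first line, so even if completed it would address only that special case, not the conjecture itself.

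Within that special case there is a genuine gap. Your four-term exact sequence and the resulting reduction are sound, and they run parallel to the paper's inductive Lemma~\ref{ann of lc}; but the entire weight of the argument then falls on the term $\bigl(0:_{H^1_\fm(R/J^{[p^e]})} x_t^{p^e}\bigr)$, and the duality/Peskine--Szpiro strategy you sketch for it does not go through. The ``discrepancy'' you flag at the end is fatal rather than technical: $R/J^{[p^e]} = S/(\fa + J^{[p^e]}S)$ and $\mathbb{F}_S^e(R/J) = S/(\fa^{[p^e]} + J^{[p^e]}S)$ differ by $(\fa + J^{[p^e]}S)/(\fa^{[p^e]} + J^{[p^e]}S)$, a module whose dimension can be as large as $\dim R/J$; it is not of finite length and there is no reason its contribution to the $\Ext_S$ long exact sequence is annihilated by anything uniform in $e$. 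Unless $R$ is regular, $H^1_\fm(R/J^{[p^e]})$ is simply not a Frobenius twist of a fixed module up to bounded error. The paper's mechanism is entirely different and much more elementary: after completing (so that $R$ is the image of a Cohen-Macaulay local ring), set $\fa_j = \Ann(H^j_\fm(R))$ and $\fa = \fa_0\cdots\fa_t$. The same long exact sequences you wrote show inductively that the \emph{single, $e$-independent} ideal $\fa^{2^t}$ annihilates $H^0_\fm(R/I^{[p^e]})$ for every $e$ (Lemma~\ref{ann of lc}); an attached-primes argument shows $\fa + I$ is $\fm$-primary (Lemma~\ref{m primary}, and this is where the image-of-Cohen-Macaulay hypothesis, hence the completion, genuinely enters); and then $\fm^{Cp^e} \subseteq (\fm^{C_1})^{[p^e]} \subseteq \fa^{2^t} + I^{[p^e]} \subseteq \Ann\, H^0_\fm(R/I^{[p^e]})$ finishes the proof. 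In your notation, the missing observation is that $\Ann\bigl(H^1_\fm(R/J^{[p^e]})\bigr)$ already contains a fixed ideal of this kind --- no duality is needed --- and that the real remaining work is to prove $\fa + I$ is $\fm$-primary.
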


There has been limited progress towards a proof of  Conjecture~\ref{LC conj}. See \cite[Corollary~3.2]{H00} and \cite[Theorem~1]{V00} for proofs that a homogeneous ideal $I$ of a equidimensional graded ring such that $\dim (R/I)=1$ satisfies the (LC) condition. We also refer the reader to \cite[Theorem~20]{K98} for a similar result which should be compared to Theorem~\ref{general Katzman}, the main result of this section, found below. Specifically, Theorem~\ref{general Katzman} shows that an ideal generated by a filter regular sequence satisfies condition (LC). We begin with a pair of lemmas.

\begin{lemma}\label{ann of lc} Let $(R, \fm,k)$ be a local ring, of arbitrary characteristic, and $t$ a non-negative integer.  For each $j \ge 0$ let $\fa_j(R) = \mathrm{Ann}(H^j_{\fm}(R))$ and $\fa = \fa_0(R) \cdots \fa_t(R)$. If $x_1, \ldots, x_t$ a filter regular sequence then
\[
\fa^{2^t}\cdot H^0_{\fm}(R/(x_1, \ldots, x_t)) = 0
\]
\end{lemma}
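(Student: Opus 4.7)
The plan is to prove the following strengthened claim by induction on $t$: for every $i\ge 0$ and every filter regular sequence $x_1,\ldots,x_t$ on $R$, the ideal $(\fa_0(R)\fa_1(R)\cdots \fa_{i+t}(R))^{2^t}$ annihilates $H^i_\fm(R/(x_1,\ldots,x_t))$. Taking $i=0$ recovers the lemma. The base case $t=0$ is immediate, since by definition $\fa_j(R)$ annihilates $H^j_\fm(R)$ and $\fa_j(R)\supseteq \fa_0(R)\cdots \fa_j(R)$.

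For the inductive step, set $J=(x_1,\ldots,x_{t-1})$ and consider the short exact sequence
\[
0\to R/(J:x_t)\xrightarrow{x_t} R/J\to R/(J,x_t)\to 0,
\]
whose long exact sequence in local cohomology I want to rewrite purely in terms of $H^\bullet_\fm(R/J)$. Because $x_t$ is filter regular on $R/J$, the module $(J:x_t)/J$ has finite length; hence from $0\to (J:x_t)/J\to R/J\to R/(J:x_t)\to 0$ I obtain canonical isomorphisms $H^j_\fm(R/J)\cong H^j_\fm(R/(J:x_t))$ for every $j\ge 1$ and a surjection in degree $0$. Since the multiplication map $R/J\xrightarrow{x_t} R/J$ factors as $R/J\twoheadrightarrow R/(J:x_t)\xrightarrow{x_t} R/J$, substituting these identifications into the long exact sequence produces, for every $i\ge 0$, an exact sequence
\[
H^i_\fm(R/J)\xrightarrow{\cdot x_t} H^i_\fm(R/J)\to H^i_\fm(R/(J,x_t))\to H^{i+1}_\fm(R/J)\xrightarrow{\cdot x_t} H^{i+1}_\fm(R/J),
\]
so $H^i_\fm(R/(J,x_t))$ fits in a short exact sequence $0\to C\to H^i_\fm(R/(J,x_t))\to K\to 0$, with $C$ a quotient of $H^i_\fm(R/J)$ and $K$ a submodule of $H^{i+1}_\fm(R/J)$.

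By the inductive hypothesis applied in degrees $i$ and $i+1$, the ideals $(\fa_0\cdots \fa_{i+t-1})^{2^{t-1}}$ and $(\fa_0\cdots \fa_{i+t})^{2^{t-1}}$ annihilate $H^i_\fm(R/J)$ and $H^{i+1}_\fm(R/J)$ respectively; both are contained in $(\fa_0\cdots \fa_{i+t})^{2^{t-1}}$, which therefore annihilates both $C$ and $K$, so its square $(\fa_0\cdots \fa_{i+t})^{2^t}$ annihilates the extension $H^i_\fm(R/(J,x_t))$. This completes the induction, with the factor $2$ in the exponent being precisely the price paid for the split into a submodule and a quotient. The only subtle point I anticipate is the boundary degree $i=0$ when writing down the five-term sequence: there $H^0_\fm(R/J)\to H^0_\fm(R/(J:x_t))$ is merely surjective rather than an isomorphism, but this is harmless because composing the relevant connecting map with a surjection leaves its image (and hence its cokernel) unchanged.
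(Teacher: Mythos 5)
Your argument is correct and is essentially the paper's proof: the paper likewise uses the short exact sequence $0\to R/((x_1,\ldots,x_{j}):x_{j+1})\xrightarrow{x_{j+1}} R/(x_1,\ldots,x_j)\to R/(x_1,\ldots,x_{j+1})\to 0$, the finite length of the colon quotient, and the resulting three-term exact sequence to see that each annihilator picks up a product of two consecutive annihilators (hence the exponent $2^t$); the only cosmetic difference is that you peel off the last element with a strengthened inductive statement indexed by $i$, while the paper peels off the first element and propagates the bound $\fa_i(R/(x_1))\supseteq\fa_i(R)\fa_{i+1}(R)\supseteq\fa^2$ forward. One sentence has the containment written backwards: you want that $(\fa_0\cdots\fa_{i+t})^{2^{t-1}}$ \emph{is contained in} both $(\fa_0\cdots\fa_{i+t-1})^{2^{t-1}}$ and $(\fa_0\cdots\fa_{i+t})^{2^{t-1}}$, so that it annihilates both $C$ and $K$.
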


\begin{proof} There is nothing to show if $t=0$. If $t>0$ consider the long exact sequence of local cohomology
\[
\cdots \to H^i_{\fm}(R) \to H^i_{\fm}(R/(x_1)) \to H^{i+1}_{\fm}(R) \to \cdots
\]
induced from the short exact sequence
\[
0\rightarrow R/(0:x_1) \xrightarrow{x_1}R\rightarrow R/(x_1)\rightarrow 0.
\] 
Then for each $i \le t-1$ we have
\[
\fa_i(R/(x_1))=\mathrm{Ann}~ (H^i_{\fm}(R/(x_1))) \supseteq \fa_i(R) \fa_{i+1}(R) \supseteq \fa^2.
\]
The assertion follows inductively as for each $i\leq t-(j+1)$ there will be short exact sequence
\[
\cdots \to H^i_{\fm}(R/(x_1,\ldots, x_j)) \to H^i_{\fm}(R/(x_1, \dots, x_j,x_{j+1})) \to H^{i+1}_{\fm}(R/(x_1,\ldots, x_j)) \to \cdots. \qedhere
\]
\end{proof}

For the next lemma we first recall the notion of an attached prime of an Artinian module and some of their basic properties due to Macdonald, \cite{M73}. Let $(R,\fm,k)$ be a local ring and $M$ a nonzero Artinian $R$-module. We say that $M$ is secondary if for each $x\in R$ the multiplication map $M\xrightarrow{x}M$ is either onto or nilpotent. If $M$ is secondary then $P=\sqrt{\Ann_R (M)}$ is a prime ideal and we call $M$ a $P$-secondary module. A secondary representation of an Artinian $R$-module $M$ is a decomposition $M=N_1+\cdots +N_\ell$ such that each $N_i$ is secondary. The chosen secondary representation of $M$ is called minimal if $N_i\not\subseteq N_j$ for all $i\neq j$ and $\sqrt{\Ann_R(N_i)}\neq \sqrt{\Ann_R(N_j)}$ for all $i\neq j$. We say that the prime ideals $\sqrt{\Ann_R(N_i)}$ are attached primes of $M$. Minimal secondary representations of Artinian modules always exist, are not unique, but the list of attached primes associated with a minimal secondary representation is unique. The set of such prime ideals is denoted by $\Att_R(M)$.

\begin{lemma}\label{m primary} Let $(R, \fm,k)$ be a local ring which is the image of a Cohen-Macaulay local ring (e.g. $R$ is excellent
\footnote{Kawasaki proved every excellent local ring is the homomorphic image of a Cohen-Macaulay local ring, see \cite[Corollary~1.2]{K02}.}), and $t$ a non-negative integer.  For each $j \ge 0$ let $\fa_j(R) = \mathrm{Ann}(H^j_{\fm}(R))$ and $\fa = \fa_0(R) \cdots \fa_t(R)$. If $x_1, \ldots, x_t$ is a filter regular sequence then $\fa + (x_1, \ldots, x_t)$ is $\fm$-primary.
\end{lemma}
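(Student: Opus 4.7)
The plan is to derive a contradiction by assuming there exists a non-maximal prime $\fp$ of $R$ containing $\fa + (x_1, \ldots, x_t)$. The two pillars of the contradiction are local duality over the ambient Cohen-Macaulay ring (which controls the support of each $\fa_j(R)$) and filter regularity of $x_1, \ldots, x_t$ (which forces $\depth R_{\fp}$ to be large).

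First, I would write $R = S/I$ with $(S, \fn)$ a Cohen-Macaulay local ring of dimension $n$ and, after replacing $S$ by its completion if necessary, assume that $S$ admits a canonical module $\omega_S$. Local duality over $S$ then gives $H^j_\fm(R) \cong \Ext^{n-j}_S(R, \omega_S)^\vee$, so $\fa_j(R) = \Ann_R\bigl(\Ext^{n-j}_S(R, \omega_S)\bigr)$, and because the Ext module is finitely generated over $R$, its support equals $V(\fa_j(R))$. For any prime $\fp$ of $R$ with preimage $\tilde{\fp}$ in $S$, localizing at $\tilde{\fp}$ and invoking local duality inside $S_{\tilde{\fp}}$ (which is Cohen-Macaulay of dimension $\Ht_S\tilde{\fp} = n - \dim R/\fp$) produces
$$\Ext^{n-j}_S(R, \omega_S)_{\tilde{\fp}} \cong H^{j - \dim R/\fp}_{\fp R_{\fp}}(R_{\fp})^\vee,$$
which vanishes unless $\depth R_{\fp} \leq j - \dim R/\fp \leq \dim R_{\fp}$. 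In particular, $\fp \in V(\fa_j(R))$ forces the inequality $\depth R_{\fp} + \dim R/\fp \leq j$.

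To close the argument, I would suppose for contradiction that $\fp \neq \fm$ contains $\fa + (x_1, \ldots, x_t)$. Since $\fp$ is prime and $\fa = \fa_0(R) \cdots \fa_t(R)$ is a product, some $\fa_j(R) \subseteq \fp$ for $j \in \{0, 1, \ldots, t\}$, so $\depth R_{\fp} + \dim R/\fp \leq j \leq t$. On the other hand, since $\fp$ is a non-maximal prime of $R$ containing $(x_1, \ldots, x_t)$, Lemma~\ref{filter}(2) ensures that $x_1, \ldots, x_t$ forms an $R_{\fp}$-regular sequence, giving $\depth R_{\fp} \geq t$. Combining the two inequalities yields $\dim R/\fp \leq 0$, so $\fp = \fm$, contradicting the choice of $\fp$. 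The chief technical point will be securing a canonical module on the ambient $S$ so that local duality applies; this will be handled by the routine reduction to completions, together with the fact that localization and contraction of annihilator ideals behave compatibly with extension to $\widehat{R}$.
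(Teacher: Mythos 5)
Your argument has the same skeleton as the paper's: from a non-maximal prime $\fp\supseteq\fa+(x_1,\ldots,x_t)$ you extract some $j\le t$ with $\fa_j(R)\subseteq\fp$, convert that into the non-vanishing of $H^{j-\dim R/\fp}_{\fp R_\fp}(R_\fp)$, and contradict the inequality $\depth R_\fp\ge t$ supplied by Lemma~\ref{filter}. The difference is only in how the middle step is justified: the paper passes through attached primes, using \cite[7.2.11]{BS13} to find $Q\in\Att_R H^j_\fm(R)$ contained in $\fp$ and then the shifted localization principle \cite[Theorem~1.1]{NQ14} to conclude $QR_\fp\in\Att_{R_\fp}H^{j-\dim R/\fp}_{\fp R_\fp}(R_\fp)$, whereas you compute directly with local duality over the ambient Cohen--Macaulay ring. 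Your duality computation is correct, and when the ambient ring admits a canonical module it gives a self-contained proof of exactly the input the paper imports from \cite{NQ14}.

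The one step you should not call routine is the reduction to the complete case. Annihilators of Artinian modules do not pass well to the completion: one always has $\fa_j(R)=\Ann_{\widehat R}(H^j_\fm(R))\cap R$, but $\fa_j(R)\widehat R$ can be strictly smaller than $\Ann_{\widehat R}(H^j_{\fm\widehat R}(\widehat R))$ even up to radical. Consequently a non-maximal prime $\fp$ of $R$ containing $\fa_j(R)+(x_1,\ldots,x_t)$ does not obviously lift to a non-maximal prime of $\widehat R$ containing $\fa_j(\widehat R)+(x_1,\ldots,x_t)\widehat R$; and, going the other way, the inequality $\depth\widehat R_{\fQ}+\dim\widehat R/\fQ\le j$ that your duality argument yields for a prime $\fQ$ in the support of $\Ext^{n-j}_{\widehat S}(\widehat R,\omega_{\widehat S})$ does not transfer to the original $\fp$, since $\depth$ and $\dim R/(-)$ move in opposite directions under $R_{\fQ\cap R}\to\widehat R_{\fQ}$ and a prime $\fQ$ chosen minimal over $\fp\widehat R$ need not contain $\Ann_{\widehat R}(H^j_{\fm\widehat R}(\widehat R))$. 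This is precisely why the paper routes the argument through attached primes, which do contract correctly from $\widehat R$ to $R$. So either assume the ambient Cohen--Macaulay ring has a canonical module (which covers the paper's application, since the proof of Theorem~\ref{general Katzman} completes $R$ first) or carry out the transfer via attached primes rather than annihilators.
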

\begin{proof} It is enough to show $\fa_j(R) + (x_1, \ldots, x_t)$ is $\fm$-primary for each $j \le t$. Without loss of generality we assume $H^j_\fm(R)\not=0$ and $t<d$. Suppose there is a prime ideal $P \neq \fm$ containing $\fa_j(R) + (x_1, \ldots, x_t)$. The module $H^j_\fm(R)$ is Artinianian and $P$ contains the annihilator of $H^j_\fm(R)$ by assumption. Therefore by \cite[7.2.11 (ii)]{BS13} there exists $Q \in \mathrm{Att}_R(H^j_{\fm}(R))$ such that $Q \subseteq P$. By \cite[Theorem~1.1]{NQ14} we have $Q R_P \in \mathrm{Att}_{R_P} H^{j - \dim R/P}_{PR_P}(R_P)$. Thus $ H^{j - \dim R/P}_{PR_P}(R_P) \neq 0$. However, $x_1, \ldots, x_t$ is a filter regular sequence in $P \neq \fm$. So $x_1, \ldots, x_t$ becomes a regular sequence after localization at $P$ by Lemma~\ref{filter}. So we also have shown $ H^j_{PR_P}(R_P) = 0$ for all $j<t$, a contradiction.
\end{proof}


We are now ready to prove the main result of this section.

\begin{theorem}\label{general Katzman} Let $(R, \fm, k)$ be a local ring of characteristic $p>0$. Let $x_1, \ldots, x_t$ be a filter regular sequence of $R$ and $I = (x_1, \ldots, x_t)$. Then there exists a positive integer $C$ such that 
$$\fm^{Cp^e}\cdot H^0_{\fm}(R/I^{[p^e]}) = 0$$
for all $e \ge 0$.
\end{theorem}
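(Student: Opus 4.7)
The plan is to combine the two preceding lemmas with a binomial expansion argument, after first reducing to the case where $R$ is a homomorphic image of a Cohen--Macaulay ring.

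First, I would reduce to the complete case. Since $H^0_\fm(R/I^{[p^e]})$ is annihilated by some power of $\fm$, it is unchanged upon tensoring with $\widehat{R}$, and flat base change gives $H^0_\fm(R/I^{[p^e]})\otimes_R \widehat{R}\cong H^0_{\fm\widehat{R}}(\widehat{R}/I^{[p^e]}\widehat{R})$. An annihilator by $(\fm\widehat{R})^{Cp^e}$ on the right-hand side descends to an annihilator by $\fm^{Cp^e}$ on the left. By Lemma~\ref{filter}(4) the sequence $x_1,\ldots,x_t$ remains filter regular in $\widehat{R}$, and $\widehat{R}$ is a homomorphic image of a regular local ring, so we may assume $R$ itself satisfies the hypothesis of Lemma~\ref{m primary}.

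Next, the key uniformity observation: by Lemma~\ref{filter}(3), $x_1^{p^e},\ldots,x_t^{p^e}$ is again a filter regular sequence for every $e\ge 0$. Applying Lemma~\ref{ann of lc} to this sequence gives
\[
\fa^{2^t}\cdot H^0_\fm(R/I^{[p^e]})=0,
\]
where $\fa=\fa_0(R)\fa_1(R)\cdots\fa_t(R)$ depends only on $R$ and not on $e$. This is the crux: a single ideal $\fa$ annihilates the towers for all $e$ simultaneously.

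Finally, Lemma~\ref{m primary} says $\fa+I$ is $\fm$-primary, so there exists $s$ with $\fm^s\subseteq \fa+I$. I would then set $N=(t+2^t)p^e$ and observe
\[
\fm^{sN}\subseteq (\fa+I)^N=\sum_{j=0}^{N}\fa^{j}\, I^{N-j}.
\]
For summands with $j\ge 2^t$ we have $\fa^j\subseteq \fa^{2^t}$, which annihilates $H^0_\fm(R/I^{[p^e]})$. For summands with $j\le 2^t-1$ we have $N-j\ge (t+2^t)p^e-(2^t-1)\ge t(p^e-1)+1$, and a pigeonhole argument on monomials in the $x_i$ shows $I^{t(p^e-1)+1}\subseteq I^{[p^e]}$, so these summands kill $R/I^{[p^e]}$ outright. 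Therefore $\fm^{s(t+2^t)p^e}$ annihilates $H^0_\fm(R/I^{[p^e]})$, and the constant $C=s(t+2^t)$ works uniformly in $e$.

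The main obstacle is producing an ideal that annihilates the whole family $\{H^0_\fm(R/I^{[p^e]})\}_{e\ge 0}$ uniformly; this is precisely what Lemma~\ref{ann of lc} buys us once one recognizes that Frobenius powers of a filter regular sequence are still filter regular, so the fixed ideal $\fa$ keeps working. Once that uniform annihilation and the $\fm$-primary containment $\fm^s\subseteq \fa+I$ are in hand, the remainder is a standard binomial bookkeeping argument.
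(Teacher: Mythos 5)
Your proposal is correct and follows essentially the same route as the paper: reduce to the complete case, apply Lemma~\ref{ann of lc} to the filter regular sequences $x_1^{p^e},\ldots,x_t^{p^e}$ to get the uniform annihilator $\fa^{2^t}$, and invoke Lemma~\ref{m primary} for $\fm$-primariness. The only (immaterial) difference is the final bookkeeping: you expand $(\fa+I)^N$ binomially and use the pigeonhole containment $I^{t(p^e-1)+1}\subseteq I^{[p^e]}$, whereas the paper takes Frobenius powers of the containment $\fm^{C_1}\subseteq\fa^{2^t}+I$; both yield a valid uniform constant $C$.
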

\begin{proof} The property that $x_1,\ldots,x_t$ is a filter regular sequence is unaffected by passing to the completion of $R$, see (4) of Lemma~\ref{filter}. In particular, by passing to the completion, we may assume $R$ is the homomorphic image of a Cohen-Macaulay local ring. We can further assume that $t>0$. For each $j \ge 0$ let $\fa_j(R) = \mathrm{Ann}(H^j_{\fm}(R))$ and $\fa = \fa_0(R) \cdots \fa_t(R)$. The sequence $x_1^{p^e}, \ldots, x_t^{p^e}$ is a filter regular sequence for each $e\geq 0$ by Lemma~\ref{filter}. Recall that $\fa^{2^t}\cdot  H^0_{\fm}(R/I^{[p^e]}) = 0$ for all $e \ge 0$  by Lemma~\ref{ann of lc}. Thus $\fa^{2^t} + I^{[p^e]} \subseteq \mathrm{Ann}~H^0_{\fm}(R/I^{[p^e]})$ for all $e \ge 0$. By Lemma~\ref{m primary} the ideal $\fa^{2^t} + I$ is $\fm$-primary. So we can choose $C_1$ such that $\fm^{C_1} \subseteq \fa^{2^t} + I$, and so
\[
(\fm^{[p^e]})^{C_1} = (\fm^{C_1})^{[p^e]}\subseteq (\fa^{2^t} + I)^{[p^e]} \subseteq \fa^{2^t} + I^{[p^e]}
\]
for all $e \ge 0$. Then we can choose a suitable multiple $C$ of $C_1$ such that 
\[
\fm^{Cp^e}  \subseteq \fa^{2^t} + I^{[p^e]} \subseteq \mathrm{Ann}~H^0_{\fm}(R/I^{[p^e]})
\]
for all $e \ge 0$. 
\end{proof}

\section{Nilpotence of Frobenius action on local cohomology and deformation}\label{Deformation Section}

Continue to let $(R,\fm,k)$ be a local ring of prime characteristic $p>0$. The first theorem of the section is a deformation type result for 
$F$-nilpotent rings. More specifically, Theorem~\ref{Deformation type statement} provides a necessary and sufficient criteria to determine if the Frobenius endomorphism on $H^t_\fm(R)$ is nilpotent by examining the behavior of the Frobenius endomorphism on $H^i_\fm(R/(x))$ for all $i\leq t-1$ for general choice of parameter element $x\in R$. The example following the proof of Theorem~\ref{Deformation type statement} indicates that the result is the best possible towards deforming $F$-nilpotent singularities. The reader should observe that the Frobenius action on the $0$th local cohomology module will always be nilpotent whenever $\dim(R)>0$. In particular, if $R$ is of Krull dimension at least $2$ with parameter element $x$, then the Frobenius action on $H^0_\fm(R/(x))$ will always be nilpotent and  cannot detect if the Frobenius action on $H^1_\fm(R)$ is nilpotent. To describe our deformation type result we will need to discuss the notion of being $F$-nilpotent relative to $R$.

 Let $I\subseteq K$ be ideals of $R$. The Frobenius endomorphism $F:R/I\to R/I$ can be factored as composition of two natural maps: $R/ I  \to R/I^{[p]} \to R/I$, where the first map is obtained by base change along the Frobenius endomorphism $F:R\to R$ and the second map is the natural projection map. We denote the first map by $F_R$: $F_R(a + I) = a^p + I^{[p]}$ for all $a \in R$.  We say that a local cohomology module $H^i_K(R/I)$ is F-nilpotent with respect to $R$ if for every $\eta\in H^i_K(R/I)$ there is an $e \gg 0$ such that $F^e_R(\eta)=0$ in $H^i_K(R/I^{[p^e]})$ where $F^e_R:H^i_K(R/I)\rightarrow H^i_K(R/I^{[p^e]})$ is the natural map induced by the $e$th Frobenius morphism $F^e_R: R/I\rightarrow R/I^{[p^e]}$.

\begin{remark}
Observe that $H^0_K(R/I)\cong (I:K^\infty)/I=\bigcup_{n=1}^\infty (I:K^n) /I$. Therefore $H^0_K(R/I)$ is $F$-nilpotent with respect to $R$ if and only if for all $e\gg 0$, $(I:K^\infty)^{[p^e]}\subseteq I^{[p^e]}$. This is equivalent to the requirement that $ (I:K^\infty)\subseteq I^F$. It should also be noted that $H^0_K(R/I)$ is always $F$-nilpotent whenever $K\not\subseteq P$ for all $P\in \min(I)$.
\end{remark}

\begin{theorem}\label{Deformation type statement} Let $(R,\fm, k)$ be a local ring of prime characteristic $p>0$ and of dimension $d>0$. For every integer $t$ the following are equivalent:

\begin{enumerate}

\item $H^i_\fm(R)$ is $F$-nilpotent for all $ i\leq t$.

\item For every filter regular element $x$, $H^{i}_\fm(R/(x))$ is F-nilpotent with respect to $R$ for every $i \leq t-1$.

\item There exists a filter regular element $x$ such that for all $n\geq 1$, $H^{i}_\fm(R/(x^n))$ is F-nilpotent with respect to $R$ for all $i \leq t-1$.

\item There exists a filter regular element $x$ such that for all $e\geq 0$, $H^{i}_\fm(R/(x^{p^e}))$ is F-nilpotent with respect to $R$ for every $i\leq t-1$.

\end{enumerate}
\end{theorem}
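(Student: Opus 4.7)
Since $(2) \Rightarrow (3) \Rightarrow (4)$ are immediate (powers of a filter regular element remain filter regular by Lemma~\ref{filter}(3)), the plan is to establish $(1) \Rightarrow (2)$ and $(4) \Rightarrow (1)$. Both directions will be controlled by the Frobenius-compatible morphism of short exact sequences
\[
\begin{CD}
0 @>>> R/(0:x^{p^{e_1}}) @>{\cdot\, x^{p^{e_1}}}>> R @>>> R/(x^{p^{e_1}}) @>>> 0\\
@. @VVV @VV{F^{e_2}}V @VV{F^{e_2}_R}V @.\\
0 @>>> R/(0:x^{p^{e_1+e_2}}) @>{\cdot\, x^{p^{e_1+e_2}}}>> R @>>> R/(x^{p^{e_1+e_2}}) @>>> 0,
\end{CD}
\]
in which the leftmost vertical map is $a\mapsto a^{p^{e_2}}$. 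Because each kernel is $\fm$-torsion, the natural map $H^i_\fm(R)\to H^i_\fm(R/(0:x^n))$ is an isomorphism for $i\ge 1$, and in the induced morphism of long exact sequences the vertical action on $H^\bullet_\fm(R/(0:x^n))$ becomes the ordinary Frobenius $F^{e_2}$ on $H^\bullet_\fm(R)$. Extracting the short exact sequence
\[
0\to H^i_\fm(R)/x^n H^i_\fm(R) \to H^i_\fm(R/(x^n)) \to \bigl(0:_{H^{i+1}_\fm(R)} x^n\bigr)\to 0
\]
(and its evident $i=0$ analogue) together with the induced Frobenius morphism between its $n=p^{e_1}$ and $n=p^{e_1+e_2}$ instances will be the main computational device.

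For $(1) \Rightarrow (2)$, fix a filter regular $x$ and $i\le t-1$. Assumption $(1)$ together with Artinianness of $H^j_\fm(R)$ (the descending chain $F^e H^j_\fm(R)$ stabilizes at its intersection, which must be zero for an $F$-nilpotent Artinian module) furnishes $e_0$ with $F^{e_0} H^j_\fm(R)=0$ for every $j\le t$. Given $\eta\in H^i_\fm(R/(x))$, its image $\gamma\in(0:_{H^{i+1}_\fm(R)} x)$ is killed by $F^{e_0}$, so $F^{e_0}_R(\eta)$ lies in the subobject $H^i_\fm(R)/x^{p^{e_0}}H^i_\fm(R)$, say as the class of some $\beta\in H^i_\fm(R)$; applying Frobenius once more kills $\beta$, so $F^{2e_0}_R(\eta)=0$. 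The two-step chase is the main obstacle: vanishing of the outer vertical maps does not directly force $F^{e_0}_R\equiv 0$ via the Five Lemma, but because relative Frobenius composes as $F^{e''}_R\circ F^{e'}_R=F^{e'+e''}_R$, iterating once more resolves the indeterminacy in the lift.

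For $(4) \Rightarrow (1)$, let $x$ be a filter regular element witnessing the hypothesis. When $j=0$, for $\beta\in H^0_\fm(R)$ with image $\eta\in H^0_\fm(R/(x))$, the case $e=0$ of $(4)$ yields $e'$ with $F^{e'}_R(\eta)=0$, which (by injectivity of the left inclusion in the SES above at level $x^{p^{e'}}$) places $F^{e'}(\beta)\in x^{p^{e'}}H^0_\fm(R)$; enlarging $e'$ so that $x^{p^{e'}}\in\fm^N$ (where $\fm^N H^0_\fm(R)=0$) forces $F^{e'}(\beta)=0$. For $1\le j\le t$ and $\beta\in H^j_\fm(R)$, the cyclic submodule $R\beta$ has finite length, so $x^{p^{e_1}}\beta=0$ for some $e_1$; the connecting map $\partial\colon H^{j-1}_\fm(R/(x^{p^{e_1}}))\to H^j_\fm(R)$ then produces $\eta$ with $\partial(\eta)=\beta$, the hypothesis provides $e'$ with $F^{e'}_R(\eta)=0$ in $H^{j-1}_\fm(R/(x^{p^{e_1+e'}}))$, and Frobenius-compatibility of the connecting maps forces $F^{e'}(\beta)=\partial'(F^{e'}_R(\eta))=0$, concluding $F$-nilpotence of $H^j_\fm(R)$.
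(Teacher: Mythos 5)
Your proof is correct and follows essentially the same route as the paper: both implications rest on the Frobenius-compatible morphisms between the long exact sequences arising from $0 \to R/(0:x^n) \xrightarrow{x^n} R \to R/(x^n) \to 0$, with a two-step chase for $(1)\Rightarrow(2)$ and a one-step chase through the connecting homomorphism for $(4)\Rightarrow(1)$. One small slip: in your $j=0$ case the kernel of $H^0_\fm(R)\to H^0_\fm(R/(x^{p^{e'}}))$ is $x^{p^{e'}}R\cap H^0_\fm(R)$ rather than $x^{p^{e'}}H^0_\fm(R)$, so the ``enlarge $e'$ so that $x^{p^{e'}}\in\fm^N$'' step does not quite close as written. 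This is harmless, though, because for $d>0$ one always has $H^0_\fm(R)\subseteq\sqrt{0}$, so $H^0_\fm(R)$ is $F$-nilpotent unconditionally --- which is exactly how the paper dispatches that case.
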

\begin{proof} $(1) \Rightarrow (2)$, suppose that $H^i_\fm(R)$ is $F$-nilpotent for each $i \leq t$ and let $x\in \fm$ be a filter regular element. The multiplication map $R \overset{x}{\to} R$ induces the short exact sequence 
$$0\rightarrow R/(0:x) \xrightarrow{x}R\rightarrow R/(x)\rightarrow 0.$$ 
Let $i\leq t-1$ and pick $\eta\in H^i_\fm(R/(x))$. Let $\delta$ be the induced connecting homomorphism of local cohomology modules 
$$H^i_\fm(R/xR)\rightarrow H^{i+1}_\fm(R/(0:x)) \cong H^{i+1}_\fm(R).$$ 
Choose $e \gg 0$ such that $F^e(\delta(\eta))=0$ where $F^e:H^{i+1}_\fm(R)\rightarrow H^{i+1}_\fm(R)$ is the $e$-th Frobenius action on $H^{i+1}_\fm(R)$. Consider the following commutative diagram whose rows are exact:
\[
\begin{CD}
H^i_{\fm}(R) @>>> H^i_{\fm}(R/(x)) @>\delta>> H^{i+1}_{\fm}(R)  \\
@VVF^{e}V @VVF^{e}_RV @VVF^{e}V \\
H^i_{\fm}(R)  @>\beta>> H^i_{\fm}(R/(x^{p^e})) @>>>  H^{i+1}_{\fm}(R). \\
\end{CD}
\]
Therefore $F^e_R(\eta)=\beta(\eta')$ for some $\eta'\in H^{i}_\fm(R)$. Hence there is $e' \gg 0$ such that $F^{e'}(\eta')=0$. Then examination of the following commutative diagram, whose rows are exact, shows that $F^{e+e'}_R(\eta)=0$ in $H^i_\fm(R/(x^{p^{e+e'}}))$.
\[
\begin{CD}
H^i_{\fm}(R) @>>> H^i_{\fm}(R/(x)) @>\delta>> H^{i+1}_{\fm}(R)  \\
@VVF^{e}V @VVF^{e}_RV @VVF^{e}V \\
H^i_{\fm}(R)  @>\beta>> H^i_{\fm}(R/(x^{p^e})) @>>>  H^{i+1}_{\fm}(R).\\
@VVF^{e'}V @VVF^{e'}_RV  \\
H^i_{\fm}(R)  @>>> H^i_{\fm}(R/(x^{p^{e+e'}})). \\
\end{CD}
\]
This completes the implication $(1)\Rightarrow (2)$. Clearly, $(2)\Rightarrow (3)\Rightarrow (4)$. We now show that $(4)\Rightarrow (1)$. Let $x$ be a filter regular element on $R$ such that $H^i_\fm(R/(x^{p^e}))$ is $F$-nilpotent with respect to $R$ for every $e$ and every $i \leq t-1$. We can assume that $t \ge 1$. Since $H^0_{\fm}(R)$ is always $F$-nilpotent we need only to show $H^i_{\fm}(R)$ is $F$-nilpotent for all $1 \le i \le t$. Suppose that $\eta\in H^i_\fm(R)$. After replacing $x$ by $x^{p^e}$ for some $e \gg 0$, we may assume that $x\eta=0$. We can then take $e$ large, and then chase the following diagram to conclude that $F^{e}(\eta)=0$.
$$
\begin{CD}
H^{i-1}_{\fm}(R/(x)) @>>> H^i_{\fm}(R) @>x>> H^{i}_{\fm}(R)  \\
@VVF^{e}_RV @VVF^{e}V @VVF^{e}V \\
H^i_{\fm}(R/(x^{p^e}))  @>>> H^i_{\fm}(R) @>>>  H^{i}_{\fm}(R). \\
\end{CD}
$$
\end{proof}

\begin{example}\label{non-deformation example} Let $R$ be the localization of $\FF_p[T_1,T_2,T_3]/(T_1^2T_2,T_1^2T_3)$ at the ideal $(T_1, T_2, T_3)$. Let $t_1, t_2, t_3$ represent the classes of $T_1,T_2,T_3$ in $R$ respectively. Observe that $R$  is local ring of prime characteristic $p$ of dimension $2$, depth $1$, and $x=t_1+t_2$ is an $R$-regular element. Moreover, $H^0_\fm(R/(x))$ is $F$-nilpotent with respect to $R$ but $H^0_\fm(R/(x^p))$ is not $F$-nilpotent with respect to $R$. To see this observe that 
\begin{align*}
H^0_\fm(R/(x))\cong& \frac{(x,t_1^2)}{(x)}\mbox{ and }\\
H^0_\fm(R/(x^{p^e}))\cong& \frac{(t_1^2,t_2^{p^e})}{(x^{p^e})}\mbox{ for all }e\geq 1.
\end{align*}
It is then simple  to see that every element of $H^0_\fm(R/(x))$ is mapped to $0$ under the Frobenius map $H^0_\fm(R/(x))\to H^0_\fm(R/(x^p))$. However, the element $t_1^2$ of $H^0_\fm(R/(x^p))$ cannot be mapped to $0$ in $H^0_\fm(R/(x^{p^{e+1}}))$ under Frobenius for all $e\geq 1$ by degree considerations.
\end{example}

We remind the reader that if $I$ is an ideal generated by a filter regular sequence and if $x$ is a filter regular element on $R/I$, then $x$ is a filter regular element on $R/I^{[p^e]}$ for all $e\geq 0$ and $I$ satisfies the (LC) condition by Theorem~\ref{general Katzman}. 

\begin{theorem}\label{general Deformation} Let $(R,\fm, k)$ be a local ring of prime characteristic $p>0$ and of dimension $d$. Let $I$ be an ideal of $R$ and suppose that $x\in R$ is a filter regular element on $R/I^{[p^e]}$ for all $e \ge 0$.
\begin{enumerate}
\item If $H^i_\fm(R/I^{[p^e]})$ is $F$-nilpotent with respect to $R$ for all $ i\leq t$ and for all $e \ge 0$ then $H^{i}_\fm(R/(I^{[p^e]}, x^{p^{e'}}))$ is $F$-nilpotent with respect to $R$ for all $i \leq t-1$ and for all $e,e' \ge 0$.
\item Conversely, if $I$ satisfies the (LC) condition and $H^{i}_\fm(R/(I^{[p^e]}, x^{p^{e'}}))$ is $F$-nilpotent with respect to $R$ for all $i \leq t-1$ and for all $e, e' \ge 0$, then $H^i_\fm(R/I^{[p^e]})$ is $F$-nilpotent with respect to $R$ for all $ i\leq t$ and for all $e \ge 0$.
\end{enumerate}
\end{theorem}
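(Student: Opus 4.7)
The plan for part~(1) is to imitate the $(1)\Rightarrow(2)$ direction of Theorem~\ref{Deformation type statement} one level down in the quotient hierarchy. I will start from the short exact sequence
$$0 \to R/(I^{[p^e]}:x^{p^{e'}}) \xrightarrow{\,\cdot x^{p^{e'}}\,} R/I^{[p^e]} \to R/(I^{[p^e]}, x^{p^{e'}}) \to 0,$$
and use that the kernel $(I^{[p^e]}:x^{p^{e'}})/I^{[p^e]}$ has finite length---an immediate consequence of Lemma~\ref{filter} since $x^{p^{e'}}$ remains filter regular on $R/I^{[p^e]}$---to identify $H^j_\fm(R/(I^{[p^e]}:x^{p^{e'}}))\cong H^j_\fm(R/I^{[p^e]})$ for each $j\ge 1$. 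The resulting long exact sequence
$$\cdots \to H^i_\fm(R/I^{[p^e]}) \to H^i_\fm(R/(I^{[p^e]},x^{p^{e'}})) \xrightarrow{\delta} H^{i+1}_\fm(R/I^{[p^e]}) \to \cdots$$
is compatible with the relative Frobenius $F^{e''}_R$ in the expected commutative diagrams. For $\eta\in H^i_\fm(R/(I^{[p^e]},x^{p^{e'}}))$ with $i\le t-1$, I first apply the hypothesis to $\delta(\eta)\in H^{i+1}_\fm(R/I^{[p^e]})$ to obtain $e''$ with $F^{e''}_R(\delta(\eta))=0$; a diagram chase then exhibits $F^{e''}_R(\eta)$ as the image of some $\tau\in H^i_\fm(R/I^{[p^{e+e''}]})$, and a second application of the hypothesis, now to $\tau$, kills it by some $F^{e'''}_R$, which by commutativity kills $F^{e''}_R(\eta)$ as well.

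For part~(2), the cases $1\le i\le t$ run the same exact sequence in the opposite direction. Given $\eta\in H^i_\fm(R/I^{[p^e]})$, Artinianness yields $N$ with $\fm^N\eta=0$, so that $x^{p^{e'}}\eta=0$ whenever $p^{e'}\ge N$; the long exact sequence then writes $\eta=\delta(\mu)$ for some $\mu\in H^{i-1}_\fm(R/(I^{[p^e]},x^{p^{e'}}))$, and the hypothesis (valid since $i-1\le t-1$) kills $\mu$, and hence $\eta$, after a single iterate of $F_R$.

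The hard step is the case $i=0$ of part~(2), which cannot be handled by the exact sequence alone. The plan here is to use (LC) to promote the canonical surjection
$$\pi_{e'}:\, H^0_\fm(R/I^{[p^e]}) \twoheadrightarrow H^0_\fm(R/(I^{[p^e]},x^{p^{e'}}))$$
to an \emph{isomorphism} whenever $p^{e'}\ge Cp^e$, where $C$ is the constant supplied by (LC). If $y\in(I^{[p^e]}:\fm^\infty)$ and $y=u+x^{p^{e'}}v$ with $u\in I^{[p^e]}$, then $x^{p^{e'}}v\in(I^{[p^e]}:\fm^\infty)=(I^{[p^e]}:\fm^{Cp^e})$ by (LC); the filter regularity of $x^{p^{e'}}$ on $R/I^{[p^e]}$ forces $(I^{[p^e]}:x^{p^{e'}})\subseteq(I^{[p^e]}:\fm^\infty)$, from which I will deduce $v\in(I^{[p^e]}:\fm^{Cp^e})$, and then $x^{p^{e'}}\in\fm^{p^{e'}}\subseteq\fm^{Cp^e}$ forces $x^{p^{e'}}v\in I^{[p^e]}$, so $y\in I^{[p^e]}$ and $\pi_{e'}$ is injective. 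The decisive feature is that the threshold $p^{e'}\ge Cp^e$ is preserved under the Frobenius shift $(e,e')\mapsto (e+e'',e'+e'')$, so a single choice of $e'$ renders $\pi_{e'+e''}$ injective on $H^0_\fm(R/I^{[p^{e+e''}]})$ simultaneously for every $e''\ge 0$. Applying the hypothesis to $\pi_{e'}(\eta)$ then yields $e''$ with $\pi_{e'+e''}(F^{e''}_R(\eta))=F^{e''}_R(\pi_{e'}(\eta))=0$, and the injectivity of $\pi_{e'+e''}$ upgrades this to $F^{e''}_R(\eta)=0$, completing the argument.
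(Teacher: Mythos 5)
Your proposal is correct and follows essentially the same route as the paper: the same short exact sequence $0\to R/(I^{[p^e]}:x^{p^{e'}})\xrightarrow{x^{p^{e'}}}R/I^{[p^e]}\to R/(I^{[p^e]},x^{p^{e'}})\to 0$, the same Frobenius-compatible diagram chases in both directions, and the same use of (LC) to force injectivity of $H^0_\fm(R/I^{[p^e]})\to H^0_\fm(R/(I^{[p^e]},x^{p^{e'}}))$ once $p^{e'}\ge Cp^e$ (your explicit colon-ideal computation is exactly what the paper packages as Lemma~\ref{0th local cohomology lemma} after replacing $x$ by $x^{p^{e_0}}$ with $p^{e_0}\ge C$). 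The only cosmetic slip is calling that map a surjection; only its injectivity is used, so nothing is affected.
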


\begin{proof} Replacing $I^{[p^e]}$ by $I$ and $x^{p^{e'}}$ by $x$, it is enough to show that $H^{i}_\fm(R/(I, x))$ is $F$-nilpotent with respect to $R$ for all $i \leq t-1$. The multiplication map $R/I \overset{x}{\to} R/I$ induces the short exact sequence 
$$0\rightarrow R/(I:x) \xrightarrow{x}R/I \rightarrow R/(I,x)\rightarrow 0.$$ 
Since $x$ is a filter regular element of $R/I$, we have $(I:x)/I$ has finite length. So $H^i_{\fm}(R/(I:x)) \cong H^i_{\fm}(R/I)$ for all $i \ge 1$. Thus we have the induced long exact sequence of local cohomology
$$\cdots \to H^i_{\fm}(R/I) \to H^i_{\fm}(R/(I,x)) \overset{\delta}{\to} H^{i+1}_{\fm}(R/I) \xrightarrow{x} H^{i+1}_{\fm}(R/I) \to \cdots.$$
There is a commutative diagram
\[
\begin{CD}
0 @>>> R/(I:x) @>x>> R/I @>>> R/(I,x) @>>> 0   \\
@. @VV(F^{e}_R)'V @VVF^{e}_RV @VVF^{e}_RV\\
0 @>>> R/(I^{[p^e]}:x^{p^e})  @>x^{p^e}>> R/I^{[p^e]} @>>> R/(I,x)^{[p^e]} @>>> 0, \\
\end{CD}
\]
where the most left vertical map is the composition 
\[
R/(I:x) \overset{F^e_R}{ \longrightarrow } R/(I:x)^{[p^e]} \twoheadrightarrow R/(I^{[p^e]}:x^{p^e}).
\]
Moreover since $x$ and $x^{p^e}$ are filter regular elements of $R/I$ and $R/I^{[p^e]}$ respectively, the derived maps of local cohomology modules in positive degrees can be identified with the Frobenius maps
\[
F^e_R: H^i_{\fm}(R/I) \longrightarrow H^i_{\fm}(R/I^{[p^e]}).
\]
Therefore we have the following commutative diagram whose rows are exact for any $i \ge 0$
\[
\begin{CD}
H^i_{\fm}(R/I) @>>> H^i_{\fm}(R/(I,x)) @>\delta>> H^{i+1}_{\fm}(R/I)  \\
@VVF^{e}_RV @VVF^{e}_RV @VVF^{e}_RV \\
H^i_{\fm}(R/I^{[p^e]})  @>\beta>> H^i_{\fm}(R/(I,x)^{[p^e]}) @>>>  H^{i+1}_{\fm}(R/I^{[p^e]}). \\
\end{CD}
\]
Let $i\leq t-1$ and pick $\eta\in H^i_\fm(R/(I,x))$. Choose $e \gg 0$ such that $F^e_R(\delta(\eta))=0$. Therefore $F^e_R(\eta)=\beta(\eta')$ for some $\eta'\in H^{j}_\fm(R)$. Hence there is $e' \gg 0$ such that $F^{e'}(\eta')=0$. Then the following commutative diagram, whose rows are exact, shows that $F^{e+e'}_R(\eta)=0$ in $H^i_\fm(R/((I,x)^{[p^{e+e']}})$, i.e., $H^i_\fm(R/(I,x))$ is $F$-nilpotent with respect to $R$.
\[
\begin{CD}
H^i_{\fm}(R/I) @>>> H^i_{\fm}(R/(I,x)) @>\delta>> H^{i+1}_{\fm}(R/I)  \\
@VVF^{e}_RV @VVF^{e}_RV @VVF^{e}_RV \\
H^i_{\fm}(R/I^{[p^e]})  @>\beta>> H^i_{\fm}(R/(I,x)^{[p^e]}) @>>>  H^{i+1}_{\fm}(R/I^{[p^e]}). \\
@VVF^{e'}_RV @VVF^{e'}_RV  \\
H^i_{\fm}(R/I^{[p^{e+e'}]})  @>>> H^i_{\fm}(R/(I,x)^{[p^{e+e'}]}). \\
\end{CD}
\]
Conversely, suppose $I$ satisfies Conjecture~\ref{LC conj} and $H^{i}_\fm(R/(I^{[p^e]}, x^{p^{e'}}))$ is $F$-nilpotent with respect to $R$ for all $i \leq t-1$ and for all $e, e' \ge 0$. By replacing $I^{[p^e]}$ by $I$, it is enough to show that $H^i_{\fm}(R/I)$ is $F$-nilpotent for all $i \le t$. We are assuming there exists $C\in \NN$ such that $\fm^{C p^e} H^0_{\fm}(R/I^{[p^e]}) = 0$ for all $e \ge 0$, i.e., $H^0_\fm(R/I^{[p^e]})=(I^{[p^e]}:_R \fm^{Cp^e})/I^{[p^e]}$ for all $e \ge 0$. Choose $e_0$ so that $p^{e_0} \ge C$ and replace $x$ by $x^{p^{e_0}}$. Then $H^0_{\fm}(R/(I^{[p^e]}:x^{p^e})) \cong 0$ by Lemma~\ref{0th local cohomology lemma} for all $e \ge 0$. Therefore, the commutative diagram 
\[
\begin{CD}
0 @>>> R/(I:x) @>x>> R/I @>>> R/(I,x) @>>> 0   \\
@. @VV(F^{e}_R)'V @VVF^{e}_RV @VVF^{e}_RV\\
0 @>>> R/(I^{[p^e]}:x^{p^e})  @>x^{p^e}>> R/I^{[p^e]} @>>> R/(I,x)^{[p^e]} @>>> 0, \\
\end{CD}
\]
induces the following commutative diagram whose rows are exact for all $e \ge 0$:
\[
\begin{CD}
0 @>>> H^0_{\fm}(R/I) @>>> H^0_{\fm}(R/(I,x))  \\
@. @VVF^{e}_RV @VVF^{e}_RV \\
 0 @>>> H^0_{\fm}(R/I^{[p^e]}) @>>> H^0_{\fm}(R/(I,x)^{[p^e]}). \\
\end{CD}
\]
It easily follows that the assumption $H^0_{\fm}(R/(I,x))$ is $F$-nilpotent with respect to $R$ implies $H^0_{\fm}(R/I)$ is $F$-nilpotent with respect to $R$. We next show that $H^i_{\fm}(R/I)$ is $F$-nilpotent with respect to $R$ for each $1 \le i \le t$. Let $\eta\in H^i_\fm(R/I)$. After replacing $x$ by $x^{p^e}$ for some $e \gg 0$, we may assume that $x\eta=0$. We can then take $e$ large, and then chase the following diagram to conclude that $F^{e}(\eta)=0$.
\[
\begin{CD}
H^{i-1}_{\fm}(R/(I, x)) @>>> H^i_{\fm}(R/I) @>x>> H^{i}_{\fm}(R/I)  \\
@VVF^{e}_RV @VVF_R^{e}V @VVF_R^{e}V \\
H^i_{\fm}(R/(I,x)^{[p^e]})  @>>> H^i_{\fm}(R/I^{[p^e]}) @>>>  H^{i}_{\fm}(R/I^{[p^e]}). \\
\end{CD}
\]
\end{proof}

\begin{remark}\label{filter ideal}
Let $(R,\fm,k)$ be a local ring of prime characteristic $p>0$ and suppose $I\subseteq R$ is an ideal satisfying condition
\[
\mathrm{Ass}_R R/I^{[p^e]} \subseteq \mathrm{Ass}_R R/I \cup \{\fm \}.
\]
Then every filter regular element of $R/I$ is also a filter regular element of $R/I^{[p^e]}$ for all $e \ge 0$. Ideals generated by a filter regular sequence will always satisfy the above condition, Remark \ref{filter}.
\end{remark}

\begin{theorem}\label{characterization nilpotent 1} Let $(R,\fm, k)$ be a local ring of prime characteristic $p>0$ and of dimension $d$. Let $t<d$. Then the following are equivalent:

\begin{enumerate}

\item $H^i_\fm(R)$ is F-nilpotent for all $ i\leq t$.

\item For every filter regular sequence $x_1, \ldots, x_t$ we have $(x_1, \ldots, x_t) : \fm^{\infty} \subseteq (x_1, \ldots, x_t)^F$.

\item There exists a filter regular sequence $x_1, \ldots, x_t$ such that 
$$(x_1^{n_1}, \ldots, x_t^{n_t}) : \fm^{\infty} \subseteq (x_1^{n_1}, \ldots, x_t^{n_t})^F$$ 
for all $n_1, \ldots, n_t \ge 1$.

\item There exists a filter regular sequence $x_1, \ldots, x_t$ such that 
$$(x_1^{p^{e_1}}, \ldots, x_t^{p^{e_t}}) : \fm^{\infty} \subseteq (x_1^{p^{e_1}}, \ldots, x_t^{p^{e_t}})^F$$ 
for all $e_1, \ldots, e_t \ge 0$.

\end{enumerate}
\end{theorem}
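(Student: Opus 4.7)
The plan is to prove the cycle of implications $(1) \Rightarrow (2) \Rightarrow (3) \Rightarrow (4) \Rightarrow (1)$, with the main work lying in an inductive deformation argument that peels off, or adds back, one element of the filter regular sequence at a time using Theorem~\ref{general Deformation}. The intermediate implications are routine: $(2) \Rightarrow (3)$ applies $(2)$ to the filter regular sequence $x_1^{n_1}, \ldots, x_t^{n_t}$ (which remains filter regular by Lemma~\ref{filter}), and $(3) \Rightarrow (4)$ is the specialization $n_i = p^{e_i}$.

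The key translation used throughout is that, for any ideal $J \subseteq R$, the inclusion $J : \fm^\infty \subseteq J^F$ is equivalent to $H^0_\fm(R/J)$ being $F$-nilpotent with respect to $R$. Indeed $H^0_\fm(R/J) = (J : \fm^\infty)/J$, and the relative Frobenius $F^e_R : R/J \to R/J^{[p^e]}$ sends $\bar a \mapsto \overline{a^{p^e}}$, so the eventual vanishing condition for each element is precisely $a^{p^e} \in J^{[p^e]}$ for some $e$.

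For $(1) \Rightarrow (2)$, fix an arbitrary filter regular sequence $x_1, \ldots, x_t$. Starting from the observation that $H^i_\fm(R) = H^i_\fm(R/(0)^{[p^e]})$ is $F$-nilpotent with respect to $R$ for $i \le t$ and every $e \ge 0$, I would apply Theorem~\ref{general Deformation}(1) in $t$ successive steps: at the $k$-th step take $I = (x_1^{p^{e_1}}, \ldots, x_{k-1}^{p^{e_{k-1}}})$, ranging over all $e_1, \ldots, e_{k-1} \ge 0$, and $x = x_k$. The element $x_k$ is filter regular on $R/I^{[p^e]}$ for every $e \ge 0$ because powers of a filter regular sequence remain filter regular by Lemma~\ref{filter}. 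After $t$ such iterations we obtain that $H^0_\fm(R/(x_1^{p^{e_1}}, \ldots, x_t^{p^{e_t}}))$ is $F$-nilpotent with respect to $R$ for all $e_i \ge 0$, and specializing $e_i = 0$ yields $(x_1, \ldots, x_t) : \fm^\infty \subseteq (x_1, \ldots, x_t)^F$ via the translation above.

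For $(4) \Rightarrow (1)$, I would reverse this procedure using the converse Theorem~\ref{general Deformation}(2). Condition $(4)$ translates to $H^0_\fm(R/(x_1^{p^{e_1}}, \ldots, x_t^{p^{e_t}}))$ being $F$-nilpotent with respect to $R$ for all $e_i \ge 0$. At each of the $t$ ascent steps the ideal $I = (x_1^{p^{e_1}}, \ldots, x_{k-1}^{p^{e_{k-1}}})$ is generated by a filter regular sequence and therefore satisfies the (LC) condition by Theorem~\ref{general Katzman}, which is exactly the hypothesis required to invoke the converse; this lifts the relative $F$-nilpotence up by one cohomological degree while removing one element from the ideal. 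After $t$ iterations we recover that $H^i_\fm(R)$ is $F$-nilpotent for all $i \le t$, which is $(1)$. The principal obstacle is precisely the (LC) hypothesis required by Theorem~\ref{general Deformation}(2) at each ascent step, and this is exactly what the filter regular case of (LC) established in Theorem~\ref{general Katzman} supplies, so the argument closes.
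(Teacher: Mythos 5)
Your proposal is correct and follows essentially the same route as the paper: translating the colon condition into relative $F$-nilpotence of $H^0_\fm$, descending via Theorem~\ref{general Deformation}(1) for $(1)\Rightarrow(2)$, and ascending via Theorem~\ref{general Deformation}(2) with the (LC) hypothesis supplied by Theorem~\ref{general Katzman} for $(4)\Rightarrow(1)$. The only cosmetic difference is that the paper first passes to the completion in $(4)\Rightarrow(1)$, a step your argument does not need since both cited theorems are stated for arbitrary local rings.
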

\begin{proof} For $(1) \Rightarrow (2)$, let $x_1, \ldots, x_i$, $i \le t$ be a filter regular sequence. By Theorem \ref{general Deformation} and Remark \ref{filter ideal} we have $H^j_m(R/(x_1, ..., x_i))$ is F-nilpotent with respect to R for all $i+j \le t$. In particular, we have $H^0_{\fm}(R/(x_1, \ldots, x_t))$ is $F$-nilpotent with respect to $R$. This condition is equivalent to the claim that
 \[
 (x_1, \ldots, x_t) : \fm^{\infty} \subseteq (x_1, \ldots, x_t)^F.
 \]
 The implications $(2) \Rightarrow (3) \Rightarrow (4)$ are trivial.
 
Our proof of $(4)\Rightarrow (1)$ begins with the observation that all hypotheses and desired conclusions are unaffected by passing to the completion of $R$, see Lemma~\ref{filter} and Proposition~\ref{Basic Proposition about F-nilpotent Rings}. Condition $(4)$ is equivalent to $H^0_{\fm}(R/(x_1^{p^{e_1}}, \ldots, x_t^{p^{e_t}}))$ being $F$-nilpotent with respect to $R$ for all $e_1, \ldots, e_t \ge 0$. We next show that $H^i_{\fm}(R/(x_1^{p^{e_1}}, \ldots, x_{t-1}^{p^{e_{t-1}}}))$ is $F$-nilpotent with respect to $R$ for all $i \le 1$ and for all $e_1, \ldots, e_{t-1} \ge 0$. For each $e_1, \ldots, e_{t-1} \ge 0$ we denote $I_{e_1,\ldots, e_{t-1}} = (x_1^{p^{e_1}}, \ldots, x_{t-1}^{p^{e_{t-1}}})$. We have $H^0_{\fm}(R/(I_{e_1,\ldots, e_{t-1}}^{[p^e]}, x_t^{p^{e'}}))$ is $F$-nilpotent with respect to $R$ for all $e, e' \ge 0$. On the other hand $I_{e_1,\ldots, e_{t-1}}$ is generated by a filter regular sequence, so its satisfies Conjecture \ref{LC conj} by Theorem \ref{general Katzman}. By Theorem \ref{general Deformation} we have $H^i_{\fm}(R/I_{e_1,\ldots, e_{t-1}})$ is $F$-nilpotent with respect to $R$ for all $i \le 1$. We continue this progress with the same method and we have $(1)$.\end{proof}


We next use the Nagel-Schenzel isomorphism, Lemma~\ref{nagel-shenzel}, to provide another equivalent characterization for Frobenius actions to be nilpotent on local cohomology modules.

\begin{proposition}\label{frs imply nilpotent} Let $(R,\fm,k)$ be local ring of prime characteristic $p$. Let $x_1,...,x_t$ be a filter regular sequence, $I=(x_1,...,x_t)$, and suppose that for each $e>0$ that $(I^{[p^e]}:\fm^{\infty})\subseteq (I^{[p^e]})^F$, then $H^t_\fm(R)$ is F-nilpotent.

\end{proposition}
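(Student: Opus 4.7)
The plan is to use the Nagel-Schenzel isomorphism (Lemma~\ref{nagel-shenzel}) to identify $H^t_\fm(R)$ with the $\fm$-torsion submodule $H^0_\fm(H^t_I(R))$ of $H^t_I(R)$, so that the Frobenius action on $H^t_\fm(R)$ is the restriction of the Frobenius action on $H^t_I(R)$. The task then reduces to showing that every element of $H^0_\fm(H^t_I(R))$ lies in $0^F_{H^t_I(R)}$; by Artinianness of $H^t_\fm(R)$, element-wise Frobenius nilpotence can be made uniform, as recalled in Section~\ref{Preliminary Section}, which is precisely what $F$-nilpotence of $H^t_\fm(R)$ demands.

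Given $\eta \in H^0_\fm(H^t_I(R))$, I would represent $\eta$ in the cofinal direct system $H^t_I(R) \cong \varinjlim_e R/I^{[p^e]}$ as the class of some $a + I^{[p^{e_0}]}$, and choose $N$ such that $\fm^N \eta = 0$. Since $\fm^N$ is finitely generated and each of its generators annihilates $\eta$ sufficiently far along the direct limit, there exists $e_1 \geq \max(e_0, 1)$ with
\[
\fm^N \cdot a \cdot (x_1 \cdots x_t)^{p^{e_1} - p^{e_0}} \subseteq I^{[p^{e_1}]}.
\]
Setting $a' := a (x_1 \cdots x_t)^{p^{e_1} - p^{e_0}}$, we have the equality $\eta = [a' + I^{[p^{e_1}]}]$ in $H^t_I(R)$, together with $a' \in (I^{[p^{e_1}]} : \fm^N) \subseteq (I^{[p^{e_1}]} : \fm^\infty)$.

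Since $e_1 \geq 1$, the standing hypothesis yields $a' \in (I^{[p^{e_1}]})^F$, so $(a')^{p^s} \in (I^{[p^{e_1}]})^{[p^s]} = I^{[p^{e_1+s}]}$ for some $s \geq 1$. Using the explicit description of the Frobenius action on $H^t_I(R)$ in the direct-limit presentation recalled in Section~\ref{Preliminary Section}, this gives
\[
F^s(\eta) = \bigl[(a')^{p^s} + I^{[p^{e_1+s}]}\bigr] = 0
\]
in $H^t_I(R)$, and a fortiori in $H^t_\fm(R)$, completing the argument.

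The one mildly delicate step, which I expect to be the main (but modest) obstacle, is the passage from $a$ to the representative $a'$ satisfying $\fm^N a' \subseteq I^{[p^{e_1}]}$; this is where the finite generation of $\fm^N$ together with the directed structure of the limit is needed, since \emph{a priori} each generator of $\fm^N$ only annihilates $\eta$ after its own depth into the direct system. Everything else is a direct unraveling of the definition of Frobenius closure and the formula for the Frobenius action on top local cohomology.
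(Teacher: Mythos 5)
Your proposal is correct and follows essentially the same route as the paper's proof: Nagel--Schenzel to identify $H^t_\fm(R)$ with $H^0_\fm(H^t_I(R))$, the cofinal direct system $\varinjlim R/I^{[p^e]}$, and the hypothesis applied to a representative lying in $(I^{[p^{e_1}]}:\fm^\infty)$. The ``delicate step'' you isolate is exactly what the paper packages as the commutation of $H^0_\fm(-)$ with the direct limit, giving $H^0_\fm(H^t_I(R))\cong\varinjlim\,(I^{[p^e]}:\fm^\infty)/I^{[p^e]}$, and your hands-on verification of it via finite generation of $\fm^N$ is valid.
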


\begin{proof} By Lemma~\ref{nagel-shenzel} there is an isomorphism $H^t_\fm(R)\cong H^0_\fm(H^t_{I}(R))$.
Moreover, we may realize $H^t_I(R)$ as the direct limit system
\[
H^t_I(R)\cong\varinjlim(\cdots\rightarrow R/I^{[p^e]} \rightarrow R/I^{[p^{e+1}]}\rightarrow \cdots)
\] 
where $R/I^{[p^e]} \rightarrow R/I^{[p^{e+1}]}$ is multiplication by $(x_1\cdots x_t)^{p^{e+1}-p^e}$. Therefore
\begin{eqnarray*}
H^t_\fm(R) \cong H^0_\fm(H^t_I(R))  &\cong& \varinjlim(\cdots\rightarrow H^0_\fm(R/I^{[p^e]}) \rightarrow H^0_\fm(R/I^{[p^{e+1}]})\rightarrow \cdots)\\
&\cong& \varinjlim(\cdots\rightarrow (I^{[p^e]}:\fm^\infty)/I^{[p^e]} \rightarrow (I^{[p^{e+1}]}:\fm^\infty)/I^{[p^{e+1}]}\rightarrow \cdots),
\end{eqnarray*}
and the map $(I^{[p^e]}:\fm^\infty)/I^{[p^e]} \rightarrow (I^{[p^{e+1}]}:\fm^\infty)/I^{[p^{e+1}]}$ is multiplication by $(x_1\cdots x_t)^{p^{e+1}-p^e}$. Let $\eta\in H^t_\fm(R)$, then there is an $e>0$ such that $\eta$ is represented by an element $r\in (I^{[p^e]}:\fm^\infty)/I^{[p^e]}$ in the direct limit system. By assumption, there then exists an $e'>0$ such that $r^{p^{e'}}\in (I^{[p^e]})^{[p^{e'}]}=I^{[p^{e+e'}]}$. Consider the following commutative diagram:
\[
\displaystyle
\begin{CD}
\frac{(I: \fm^{\infty})}{I} @>>> \frac{(I^{[p]}: \fm^{\infty})}{I^{[p]}} @>>> \cdots @>>> \frac{(I^{[p^e]}: \fm^{\infty})}{I^{[p^e]}} @>>> \cdots  \\
@VVF^{e'}_RV @VVF^{e'}_RV @.  @VVF^{e'}_RV\\
\frac{(I^{[p^{e'}]}: \fm^{\infty})}{I^{[p^{e'}]}} @>>> \frac{(I^{[p^{e'+1}]}: \fm^{\infty})}{I^{[p^{e'+1}]}} @>>> \cdots @>>> \frac{(I^{[p^{e+e'}]}: \fm^{\infty})}{I^{[p^{e+e'}]}} @>>> \cdots \\
\end{CD}
\]
The vertical maps $F^{e'}$ are raising elements to the $p^{e'}$ power and the horizontal maps $(I^{[p^{e''}]}:\fm^\infty)/I^{[p^{e''}]} \rightarrow (I^{[p^{{e''}+1}]}:\fm^\infty)/I^{[p^{{e''}+1}]}$ are multiplication by $(x_1\cdots x_t)^{p^{e''+1}-p^{e''}}$. The induced map on direct limit systems of the above commuting diagram gives the $e'$th Frobenius action on $H^{t}_\fm(R)$.  Since the representative $r\in (I^{[p^e]}:\fm^\infty)/I^{[p^e]}$ of $\eta$ is mapped to $0$ by $F^{e'}$, we see that the $e'$th iterated Frobenius action on $H^t_\fm(R)$ sends $\eta$ to $0$. 
\end{proof}

\begin{theorem}\label{characterization nilpotent 2} Let $(R,\fm, k)$ be a local ring of dimension $d$ and prime characteristic $p>0$ and let $t<d$. Then the following are equivalent:

\begin{enumerate}

\item $H^i_\fm(R)$ is F-nilpotent for all $ i\leq t$;

\item There exists a filter regular sequence $x_1, \ldots, x_t$ such that
$$(x_1^{p^{e}}, \ldots, x_s^{p^{e}}) : \fm^{\infty} \subseteq (x_1^{p^{e}}, \ldots, x_s^{p^{e}})^F$$ for all $s \le t$ and for all $e \ge 0$. 
\end{enumerate}

\end{theorem}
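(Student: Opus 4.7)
The plan is straightforward: both implications reduce essentially immediately to results already established in this section, namely Theorem~\ref{characterization nilpotent 1} and Proposition~\ref{frs imply nilpotent}. The statement is really a streamlined reformulation of Theorem~\ref{characterization nilpotent 1}, trading the freedom to choose mixed Frobenius powers $p^{e_1},\ldots,p^{e_s}$ for the requirement that the colon-containment be verified across all initial segments of a single filter regular sequence.

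For $(1)\Rightarrow (2)$, I would pick any filter regular sequence $x_1,\ldots,x_t$ on $R$ (such a sequence exists by the standard prime avoidance argument). For each $s\le t$, the initial segment $x_1,\ldots,x_s$ is itself a filter regular sequence, and by Lemma~\ref{filter}(3) the sequence $x_1^{p^e},\ldots,x_s^{p^e}$ is a filter regular sequence for every $e\ge 0$. Since hypothesis $(1)$ entails in particular that $H^i_\fm(R)$ is $F$-nilpotent for all $i\le s$, the equivalence $(1)\Leftrightarrow(2)$ of Theorem~\ref{characterization nilpotent 1} applied with $s$ in place of $t$ to the filter regular sequence $x_1^{p^e},\ldots,x_s^{p^e}$ yields the desired containment $(x_1^{p^e},\ldots,x_s^{p^e}):\fm^\infty \subseteq (x_1^{p^e},\ldots,x_s^{p^e})^F$.

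For $(2)\Rightarrow (1)$, I would verify $F$-nilpotence of $H^i_\fm(R)$ separately for each $i\le t$. The case $i=0$ is automatic. For $1\le i\le t$, the initial segment $x_1,\ldots,x_i$ of the given filter regular sequence is a filter regular sequence, and with $I=(x_1,\ldots,x_i)$ we have $I^{[p^e]}=(x_1^{p^e},\ldots,x_i^{p^e})$. Hypothesis $(2)$ applied with $s=i$ gives exactly $(I^{[p^e]}:\fm^\infty)\subseteq (I^{[p^e]})^F$ for all $e\ge 0$, which is the hypothesis of Proposition~\ref{frs imply nilpotent}; that proposition then concludes $H^i_\fm(R)$ is $F$-nilpotent.

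There is no serious obstacle: the technical work has already been carried out in Theorem~\ref{characterization nilpotent 1} (via Theorems~\ref{general Katzman} and \ref{general Deformation}) and in Proposition~\ref{frs imply nilpotent}. The only bookkeeping required is the trivial observation that initial segments of filter regular sequences remain filter regular, together with the stability of filter regularity under taking $p^e$-th powers provided by Lemma~\ref{filter}(3).
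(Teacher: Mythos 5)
Your proposal is correct and follows exactly the paper's own (very terse) proof: $(1)\Rightarrow(2)$ is deduced from $(1)\Rightarrow(2)$ of Theorem~\ref{characterization nilpotent 1} applied to the initial segments $x_1^{p^e},\ldots,x_s^{p^e}$ (filter regular by Lemma~\ref{filter}), and $(2)\Rightarrow(1)$ is Proposition~\ref{frs imply nilpotent} applied with $I=(x_1,\ldots,x_i)$ for each $i\le t$. The bookkeeping you supply (initial segments and $p^e$-th powers of filter regular sequences remain filter regular, and the $i=0$ case being automatic) is exactly what the paper leaves implicit.
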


\begin{proof} $(1) \Rightarrow (2)$ follows from $(1) \Rightarrow (2)$ of Theorem \ref{characterization nilpotent 1}.\\
$(2) \Rightarrow (1)$ follows from Proposition \ref{frs imply nilpotent}.
\end{proof}
Comparing with the condition $(4)$ of Theorem~\ref{characterization nilpotent 1}, the condition $(2)$ of Theorem~\ref{characterization nilpotent 2} we need to consider all $s \le t$. However, we need only consider Frobenius powers of a filter regular sequence in $(2)$ of Theorem~\ref{characterization nilpotent 2} instead of mixed $p$th powers in $(4)$ of Theorem~\ref{characterization nilpotent 1}.

\section{\texorpdfstring{$F$}{F}-nilpotent rings}\label{Section F-nilpotent}
Recall that a local ring $(R,\fm,k)$ of dimension $d$ and prime characteristic $p>0$ is $F$-nilpotent if $H^i_\fm(R)$ are $F$-nilpotent whenever $i< d$ and $0^F_{H^d_\fm(R)}=0^*_{H^d_\fm(R)}$.

\begin{proposition}\label{tight closure is nilpotent} Let $(R, \fm,k)$ be an equidimensional excellent local ring of dimension $d$ and of prime characteristic $p>0$. If $x_1, \ldots, x_d$ is a filter regular sequence on $R$ and $((x_1, \ldots, x_d)^{[p^e]})^* = ((x_1, \ldots, x_d)^{[p^e]})^F$ for all $e \ge 0$ then the Frobenius action on $0^*_{H^d_{\fm}(R)}$ is nilpotent.
\end{proposition}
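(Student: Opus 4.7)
The plan is to reduce the problem to the explicit direct limit description of $0^*_{H^d_\fm(R)}$ given in Remark~\ref{direct system}(4) and then apply the hypothesis directly to representatives.

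I begin by noting that a filter regular sequence $x_1,\ldots,x_d$ of length equal to $\dim R$ is automatically a system of parameters. Indeed, if some non-maximal prime $\fp$ contained $(x_1,\ldots,x_d)$, then by Lemma~\ref{filter}(2) the sequence $x_1,\ldots,x_d$ would be $R_\fp$-regular, giving $\Ht(\fp) \ge d$; since every non-maximal prime in a $d$-dimensional local ring has height at most $d-1$, this is impossible. Therefore $(x_1,\ldots,x_d)$ is $\fm$-primary and $H^d_{(x_1,\ldots,x_d)}(R) = H^d_\fm(R)$. Because $R$ is excellent and equidimensional, Remark~\ref{direct system}(4) yields
\[
0^*_{H^d_\fm(R)} \cong \varinjlim_e \frac{(x_1^{p^e},\ldots,x_d^{p^e})^*}{(x_1^{p^e},\ldots,x_d^{p^e})},
\]
the direct limit being taken along the cofinal subsystem $n = p^e$, with transition maps given by multiplication by $(x_1\cdots x_d)^{p^{e+1}-p^e}$. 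Consequently every element of $0^*_{H^d_\fm(R)}$ admits a representative of the form $r + (x_1^{p^e},\ldots,x_d^{p^e})$ with $r \in (x_1^{p^e},\ldots,x_d^{p^e})^*$ for some $e\ge 0$.

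Next, I would fix $\eta \in 0^*_{H^d_\fm(R)}$ and choose such a representative $r$. The hypothesis $((x_1,\ldots,x_d)^{[p^e]})^* = ((x_1,\ldots,x_d)^{[p^e]})^F$ produces an integer $e' \ge 0$ with
\[
r^{p^{e'}} \in (x_1^{p^e},\ldots,x_d^{p^e})^{[p^{e'}]} = (x_1^{p^{e+e'}},\ldots,x_d^{p^{e+e'}}).
\]
Using the explicit formula for the $e'$th iterate of Frobenius on $H^d_\fm(R)$ recalled in Section~\ref{Preliminary Section}, namely
\[
F^{e'}\bigl([r + (x_1^{p^e},\ldots,x_d^{p^e})]\bigr) = \bigl[r^{p^{e'}} + (x_1^{p^{e+e'}},\ldots,x_d^{p^{e+e'}})\bigr],
\]
we conclude that $F^{e'}(\eta) = 0$ in $H^d_\fm(R)$.

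There is no serious obstacle to this plan: the identification of a length-$d$ filter regular sequence with a system of parameters is a one-line dimension argument, and the passage to the cofinal subsystem indexed by $n = p^e$ is what lets the Frobenius-closure hypothesis (which is stated only for $[p^e]$-powers) apply to an arbitrary representative. Everything else is routine manipulation with the direct limit presentation of the top local cohomology module and the explicit description of the Frobenius action on it.
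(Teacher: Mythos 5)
Your proposal is correct and follows essentially the same route as the paper: both rest on the direct-limit identification of $0^*_{H^d_\fm(R)}$ (and implicitly $0^F_{H^d_\fm(R)}$) from Remark~\ref{direct system}(3)--(4), with your element chase through the explicit Frobenius action simply filling in the step the paper dismisses as "easily follows." The preliminary observation that a length-$d$ filter regular sequence is a system of parameters is a correct and appropriate detail to include.
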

\begin{proof} Under the assumptions $R$ is equidimensional and excellent we have that for any $e\in\NN$
\[
0^*_{H^d_\fm(R)}=\varinjlim \frac{((x_1,\ldots x_d)^{[p^e]})^*}{(x_1,\ldots x_d)^{[p^e]}}.
\]
Moreover, it is generally the case that for each $e\in\NN$
\[
0^F_{H^d_\fm(R)}=\varinjlim \frac{((x_1,\ldots x_d)^{[p^e]})^F}{(x_1,\ldots x_d)^{[p^e]}}
\]
(see Remark~\ref{direct system}). The proposition easily follows from the above identifications of $0^*_{H^d_\fm(R)}$ and $0^F_{H^d_\fm(R)}$.
\end{proof}
Combination of Theorem~\ref{characterization nilpotent 2} and Proposition~\ref{tight closure is nilpotent} yields a sufficient criterion for a ring to be $F$-nilpotent.

\begin{theorem}\label{sufficient criteria for F-nilpotent} Let $(R, \fm,k)$ be an equidimensional excellent local ring of dimension $d$ and of prime characteristic $p>0$.  Suppose $x_1, \ldots, x_d$ is a filter regular sequence of $R$ such that the following hold:
\begin{enumerate}
\item For all $t<d$ and for all $e \ge 0$, $(x_1^{p^e}, \ldots, x_t^{p^e}): \fm^{\infty} \subseteq (x_1^{p^e}, \ldots, x_t^{p^e})^F$.
\item For all $e\geq 0$, $(x_1^{p^e}, \ldots, x_d^{p^e})^* = (x_1^{p^e}, \ldots, x_d^{p^e})^F$.
\end{enumerate}
Then $R$ is $F$-nilpotent.
\end{theorem}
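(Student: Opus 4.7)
The plan is essentially to assemble Theorem~\ref{characterization nilpotent 2} and Proposition~\ref{tight closure is nilpotent} with no additional work. Recall that by definition $R$ is $F$-nilpotent precisely when $H^i_\fm(R)$ is $F$-nilpotent for every $i<d$ and $0^{*}_{H^d_\fm(R)}=0^{F}_{H^d_\fm(R)}$. So I need to split the proof into two verifications, one for the lower local cohomologies (using hypothesis (1)) and one for the top local cohomology (using hypothesis (2)).

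First, I would use hypothesis (1) together with Theorem~\ref{characterization nilpotent 2} applied with parameter $t=d-1$. Since $x_1,\ldots,x_d$ is a filter regular sequence, so is every initial segment $x_1,\ldots,x_s$ with $s\leq d-1$; hypothesis (1) is exactly condition $(2)$ of Theorem~\ref{characterization nilpotent 2} for this initial segment (ranging over all $s\leq d-1$ and all $e\geq 0$). Consequently Theorem~\ref{characterization nilpotent 2} yields $H^i_\fm(R)\text{ is }F\text{-nilpotent for all }i\leq d-1$, which handles the lower local cohomology modules.

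Second, I would invoke Proposition~\ref{tight closure is nilpotent} on the full parameter sequence $x_1,\ldots,x_d$: hypothesis (2) is precisely the input required, and since $R$ is excellent and equidimensional, the proposition concludes that the Frobenius action on $0^{*}_{H^d_\fm(R)}$ is nilpotent. Any element of $0^{*}_{H^d_\fm(R)}$ killed by some iterate of Frobenius lies in $0^{F}_{H^d_\fm(R)}$, so $0^{*}_{H^d_\fm(R)}\subseteq 0^{F}_{H^d_\fm(R)}$; the reverse inclusion is automatic. Combining both conclusions yields that $R$ is $F$-nilpotent.

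There is no genuine obstacle here; the statement is essentially a packaging of the two cited results. The only point worth being mindful of is verifying that the indexing in hypothesis (1) really does match condition $(2)$ of Theorem~\ref{characterization nilpotent 2} (including that one may take $s$ ranging over all values $\leq d-1$), and that hypothesis (2) provides exactly the input hypothesis of Proposition~\ref{tight closure is nilpotent}; both checks are immediate from the way the previous statements are phrased.
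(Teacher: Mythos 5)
Your proposal is correct and is exactly the paper's argument: the paper presents this theorem as an immediate combination of Theorem~\ref{characterization nilpotent 2} (applied with $t=d-1$, using hypothesis (1)) and Proposition~\ref{tight closure is nilpotent} (using hypothesis (2)), with no additional work. The indexing checks you flag do go through as you describe.
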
 

If $(R,\fm,k)$ satisfies colon capturing and if $x_1,\ldots, x_d$ is a system of parameters, then for all $t<d$ one has $(x_1,\ldots, x_t):\fm^\infty\subseteq (x_1,\ldots, x_t):x^{\infty}_{t+1}\subseteq (x_1,\ldots, x_t)^*$. If such a ring also satisfies $(x_1,\ldots, x_t)^F=(x_1,\ldots, x_t)^*$ for all filter regular sequences then $R$ must be $F$-nilpotent by Theorem~\ref{sufficient criteria for F-nilpotent}.

\begin{corollary}\label{tight imply nilpotent}  Let $(R, \fm, k)$ be an excellent equidimensional local ring of dimension $d$ and of prime characteristic $p>0$. Suppose $x_1, \ldots, x_d$ is a filter regular sequence satisfying that $(x_1^{p^e}, \ldots, x_t^{p^e})^* = (x_1^{p^e}, \ldots, x_t^{p^e})^F$ for all $t \le d$ and for all $e\ge 0$. Then $R$ is $F$-nilpotent.
\end{corollary}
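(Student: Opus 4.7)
The plan is to deduce the corollary directly from Theorem~\ref{sufficient criteria for F-nilpotent}. Condition~(2) of that theorem is identically the $t = d$ case of our hypothesis, so the content of the proof reduces to verifying condition~(1): for every $t < d$ and every $e \ge 0$,
\[
(x_1^{p^e}, \ldots, x_t^{p^e}) : \fm^{\infty} \subseteq (x_1^{p^e}, \ldots, x_t^{p^e})^F.
\]
This is essentially the implication foreshadowed in the discussion between Theorem~\ref{sufficient criteria for F-nilpotent} and the corollary.

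First I would confirm that $x_1^{p^e}, \ldots, x_d^{p^e}$ is a system of parameters for $R$. By Lemma~\ref{filter}(3) the Frobenius power sequence remains filter regular, and since $R$ is excellent (hence catenary) and equidimensional of dimension $d$, a filter regular sequence of length $d$ cannot sit inside a non-maximal prime: localizing at such a prime $P$ would, by Lemma~\ref{filter}(2), produce an $R_P$-regular sequence of length $d$, forcing $\dim R_P \ge d$, which is impossible. Hence $(x_1^{p^e}, \ldots, x_d^{p^e})$ is $\fm$-primary.

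Next I would invoke colon capturing by tight closure, which holds for parameter ideals in an excellent equidimensional local ring of prime characteristic: for any system of parameters $y_1, \ldots, y_d$, any $t < d$, and any $N \ge 1$,
\[
(y_1, \ldots, y_t) : y_{t+1}^N \subseteq (y_1, \ldots, y_t)^*.
\]
Since $x_{t+1}^{p^e} \in \fm$, every sufficiently high power of $x_{t+1}^{p^e}$ lies in an arbitrarily given $\fm^N$, so
\[
(x_1^{p^e}, \ldots, x_t^{p^e}) : \fm^{\infty} \subseteq (x_1^{p^e}, \ldots, x_t^{p^e}) : (x_{t+1}^{p^e})^{\infty} \subseteq (x_1^{p^e}, \ldots, x_t^{p^e})^*.
\]
Combining this with the hypothesized equality $(x_1^{p^e}, \ldots, x_t^{p^e})^* = (x_1^{p^e}, \ldots, x_t^{p^e})^F$ establishes condition~(1) of Theorem~\ref{sufficient criteria for F-nilpotent}, and the theorem then yields $F$-nilpotence.

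I do not expect a serious obstacle: the proof is a brief synthesis of the hypothesis, the colon-capturing property (which is precisely where excellence and equidimensionality are used), and Theorem~\ref{sufficient criteria for F-nilpotent}. The one delicate point is the passage from $\fm^{\infty}$-colons to $(x_{t+1}^{p^e})^{\infty}$-colons; this is where it matters that we are given a filter regular sequence of full length $d$ rather than only a partial one, so that at each stage $t < d$ there is a genuine parameter $x_{t+1}$ available to apply colon capturing.
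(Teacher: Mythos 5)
Your proposal is correct and follows the paper's own route exactly: the paper deduces the corollary from Theorem~\ref{sufficient criteria for F-nilpotent} via the chain $(x_1,\ldots,x_t):\fm^\infty\subseteq (x_1,\ldots,x_t):x_{t+1}^\infty\subseteq (x_1,\ldots,x_t)^*$ supplied by colon capturing, with condition~(2) being the $t=d$ case of the hypothesis. Your extra verification that a length-$d$ filter regular sequence is a system of parameters is a harmless elaboration of what the paper leaves implicit.
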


We now discuss the notion of a relative tight closure of the $0$-submodule of a local cohomology module.

\begin{remark}
Let $I\subseteq K$ be ideals of $R$ and suppose $K/I$ is an ideal of $R/I$ generated by $t$-elements. Then by $(2)$ of Remark~\ref{direct system}  the tight closure of the zero submodule of $H^t_K(R/I)$ with respect to $R$ is
\[
0^{*_R}_{H^t_K(R/I)} = \{ \eta \in H^t_K(R/I) \mid c F^e_R(\eta) = 0 \in H^t_K(R/I^{[p^e]}) \text{ for some } c \in R^{\circ} \text{ and for all } e \gg 0\}.
\]
By the same remark,  the Frobenius closure of zero submodule of $H^t_K(R/I)$ with respect to $R$ is
\[0^{F_R}_{H^t_K(R/I)} = \{ \eta \in H^t_K(R/I) \mid F^e_R(\eta) = 0 \in H^t_K(R/I^{[p^e]}) \text{ for some } e \ge 0\}.
\]
\end{remark}

Similar to $(3)$ of Remark~\ref{direct system} we have the following.

\begin{lemma}\label{Direct limit of Frobenius closures lemma} Let $(R,\fm,k )$ be a local ring of prime characteristic $p>0$, $I\subseteq R$ an ideal, $x_1,\ldots, x_t$ a sequence of elements in $R$, and $K=(I,x_1,\ldots, x_t)$. If we identify 
\[
H^t_K(R/I)\cong \varinjlim_n\left(\cdots \to \frac{R}{(I,x_1^n,\ldots, x_t^n)}\xrightarrow{x_1\cdots x_t}\frac{R}{(I,x_1^{n+1},\ldots, x_t^{n+1})}\to \cdots\right)
\]
then 
\[
0^{F_R}_{H^t_K(R/I)}\cong \varinjlim_n \frac{(I,x_1^n, \ldots, x_t^n)^F}{(I,x_1^n, \ldots, x_t^n)}.
\]
\end{lemma}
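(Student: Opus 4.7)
The plan is to unwind the relative Frobenius action $F^e_R$ in terms of the explicit direct limit presentations of $H^t_K(R/I)$ and $H^t_K(R/I^{[p^e]})$. On the source we use the presentation given in the statement, and on the target we use the analogous presentation
\[
H^t_K(R/I^{[p^e]}) \cong \varinjlim_m \frac{R}{(I^{[p^e]},x_1^m,\ldots,x_t^m)},
\]
which is valid because modulo $I^{[p^e]}$ the image of $I$ is nilpotent, so $K$ has the same radical as $(x_1,\ldots,x_t)$ in $R/I^{[p^e]}$. The map $F^e_R$ is induced by the natural ring map $R/I \to R/I^{[p^e]}$, $a\mapsto a^{p^e}$, and at the level of the $n$th term on the left it lands in the $(np^e)$th term on the right via $r+(I,x_1^n,\ldots,x_t^n) \mapsto r^{p^e}+(I^{[p^e]},x_1^{np^e},\ldots,x_t^{np^e})$. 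Compatibility with the transition maps is immediate from the identity $((x_1\cdots x_t)r)^{p^e} = (x_1\cdots x_t)^{p^e}r^{p^e}$.

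Once this description is in hand, each direction is a direct translation. First I would show the inclusion $\supseteq$: if $\eta$ is represented by $r+(I,x_1^n,\ldots,x_t^n)$ with $r\in (I,x_1^n,\ldots,x_t^n)^F$, pick $e$ with $r^{p^e}\in (I,x_1^n,\ldots,x_t^n)^{[p^e]} = (I^{[p^e]},x_1^{np^e},\ldots,x_t^{np^e})$. Then $F^e_R(\eta)$ is represented by the zero coset in the $(np^e)$th term, so $F^e_R(\eta)=0$ and $\eta\in 0^{F_R}_{H^t_K(R/I)}$.

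For the reverse inclusion, suppose $\eta\in 0^{F_R}_{H^t_K(R/I)}$ is represented by $r+(I,x_1^n,\ldots,x_t^n)$ and $F^e_R(\eta)=0$ in $H^t_K(R/I^{[p^e]})$ for some $e$. Unfolding the vanishing in the direct limit on the right, and using the cofinal subsystem indexed by $m=Mp^e$ with $M\ge n$, one obtains an $M\ge n$ such that
\[
(x_1\cdots x_t)^{(M-n)p^e}\, r^{p^e} \in (I^{[p^e]},x_1^{Mp^e},\ldots,x_t^{Mp^e}) = (I,x_1^M,\ldots,x_t^M)^{[p^e]}.
\]
Rewriting the left side as $\bigl((x_1\cdots x_t)^{M-n}r\bigr)^{p^e}$ shows that $(x_1\cdots x_t)^{M-n}r \in (I,x_1^M,\ldots,x_t^M)^F$. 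Since $(x_1\cdots x_t)^{M-n}r$ is precisely the image of $r$ in $R/(I,x_1^M,\ldots,x_t^M)$ under the transition maps of the source system, $\eta$ is represented by an element of $(I,x_1^M,\ldots,x_t^M)^F/(I,x_1^M,\ldots,x_t^M)$, giving the desired containment in $\varinjlim_n (I,x_1^n,\ldots,x_t^n)^F/(I,x_1^n,\ldots,x_t^n)$.

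The argument is essentially bookkeeping, and the only mildly delicate point is the cofinal reindexing of the target direct limit so that $p^e$th powers match the bracket Frobenius power of the source ideal. This parallels the proof of Remark~\ref{direct system}(3) cited from \cite{HQ18}, with the only new feature being that one must track the $R$-module structure and the Frobenius action relative to $R$ rather than the intrinsic action on $H^t_I(R)$.
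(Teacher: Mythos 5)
Your argument is correct and follows essentially the same route as the paper's proof: the easy inclusion is immediate, and for the converse you unwind the vanishing of $F^e_R(\eta)$ in the target direct limit to produce a relation $\bigl((x_1\cdots x_t)^{M-n}r\bigr)^{p^e}\in (I,x_1^M,\ldots,x_t^M)^{[p^e]}$, exactly as the paper does (the paper merely normalizes the representative to the first term of the system before running the same computation). No gaps.
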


\begin{proof}
It is easy to see that
\[
0^{F_R}_{H^t_K(R/I)}\supseteq \varinjlim_n \frac{(I,x_1^n, \ldots, x_t^n)^F}{(I,x_1^n, \ldots, x_t^n)}.
\]
Conversely, let $\eta\in 0^{F_R}_{H^t_K(R/I)}$. Without loss of generality we may assume $\eta$ is represented by an element $x+(I,x_{1},\ldots, x_t)$ in $R/(I,x_1,\ldots, x_t)$. We are assuming that there is an $e\in \NN$ such that $\eta$ is mapped to $0$ under $F^e: H^t_K(R/I)\to H^t_K(R/I^{[p^e]})$. If we identify $H^t_K(R/I^{[p^e]})$ as the direct limit system
\[
\varinjlim_n\left(\cdots \to \frac{R}{(I^{[p^e]},x_1^{p^en},\ldots, x_t^{p^en})}\xrightarrow{(x_1\cdots x_t)^{p^e}}\frac{R}{(I^{[p^e]},x_1^{p^e(n+1)},\ldots, x_t^{p^e(n+1)})}\to \cdots\right)
\]
then $\eta$ is mapped to the element represented by $x^{p^e}+(I^{[p^e]},x_1^{p^e},\ldots, x_t^{p^e})$. Since $F^e(\eta)=0$ there is a $n\in \NN$ such that $x^{p^e}(x_1\cdots x_t)^{p^e(n-1)}\in (I^{[p^e]},x_1^{p^en},\ldots, x_t^{p^en})$, hence $x(x_1\cdots x_t)^{n-1}$ is an element of $(I,x^n_1,\ldots, x_t^n)^F$ and $\eta$ is also represented by $x(x_1\cdots x_t)^{n-1}+(I,x_1^n,\ldots, x_t^n)$. 
\end{proof}

We also extend (4) of Remark~\ref{direct system} in the same way we extend (3) of Remark~\ref{direct system} in Lemma~\ref{Direct limit of Frobenius closures lemma}, but first we recall a couple of facts concerning annihilators of local cohomology modules.  Let $(R,\fm,k)$ be a local ring of dimension $d>0$ and for each $i \ge 0$ set $\fa_i(R) = \mathrm{Ann}(H^i_{\fm}(R))$ and let $\fa(R) = \fa_0(R) \ldots \fa_{d-1}(R)$. Suppose further that $R$ is an image of a Cohen-Macaulay local ring. Then we have the following (see \cite[Section 8.1]{BH93}): 
\begin{enumerate}
\item $\dim R/\fa(R) < d$.
\item If $x_1, \ldots, x_i$, $1 \le i \le d$ is part of a system of parameters then 
\[
\fa(R) \big( (x_1, \ldots, x_{i-1}): x_i\big) \subseteq (x_1, \ldots, x_{i-1}).
\]
\end{enumerate}
In particular, repeated application of (2) provides the following:

\begin{lemma}\label{ann and colon} Let $(R, \fm,k)$ be an excellent local ring of dimension $d>0$. Suppose that $x_1, \ldots, x_s, x_{s+1}, \ldots, x_{s+t}$ is a part of a system of parameters of $R$ then 
\begin{align*}
\fa(R)^t \big( (x_1^{n_1}, \ldots, x_{s}^{n_s}, x_{s+1}^{n_{s+1}+m_{s+1}}, \ldots, x_{s+t}^{n_{s+t}+m_{s+t}}): & (x_{s+1}^{m_{s+1}} \cdots x_{s+t}^{m_{s+t}})\big) \subseteq \\
& (x_1^{n_1}, \ldots, x_{s}^{n_s}, x_{s+1}^{n_{s+1}}, \ldots, x_{s+t}^{n_{s+t}}) 
\end{align*}
for all $n_i, m_i \ge 1$.
\end{lemma}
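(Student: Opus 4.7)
The plan is to prove the lemma by induction on $t$, using property (2) stated immediately above the lemma as the sole substantive ingredient. The key observation making the induction work is that positive powers of a part of a system of parameters are again part of a system of parameters, so property (2) may be invoked freely after substituting powers of the $x_i$'s.

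For the base case $t=1$, I would pick an element $r$ in the colon ideal $(x_1^{n_1},\ldots,x_s^{n_s},x_{s+1}^{n_{s+1}+m_{s+1}}):x_{s+1}^{m_{s+1}}$, express $r\cdot x_{s+1}^{m_{s+1}}$ as an element of $(x_1^{n_1},\ldots,x_s^{n_s})$ plus a multiple $b\cdot x_{s+1}^{n_{s+1}+m_{s+1}}$, and rearrange to deduce that $r-b\cdot x_{s+1}^{n_{s+1}}$ lies in the colon $(x_1^{n_1},\ldots,x_s^{n_s}):x_{s+1}^{m_{s+1}}$. Property (2) applied to the parameter sequence $x_1^{n_1},\ldots,x_s^{n_s},x_{s+1}^{m_{s+1}}$ then absorbs the one needed factor of $\fa(R)$, settling this case.

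For the inductive step, having assumed the result for $t-1$, I would set $A=(x_1^{n_1},\ldots,x_s^{n_s},x_{s+1}^{n_{s+1}+m_{s+1}},\ldots,x_{s+t-1}^{n_{s+t-1}+m_{s+t-1}})$ and $M'=x_{s+1}^{m_{s+1}}\cdots x_{s+t-1}^{m_{s+t-1}}$, and peel off the final variable exactly as in the base case: isolating the $x_{s+t}^{n_{s+t}+m_{s+t}}$-term in the equation witnessing $r$'s membership in the given colon and cancelling $x_{s+t}^{m_{s+t}}$ yields an element of $A:x_{s+t}^{m_{s+t}}$, which property (2) converts into the containment $\fa(R)\cdot r\cdot M'\subseteq A+(x_{s+t}^{n_{s+t}})$. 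Rewriting this as $cr\in(A+(x_{s+t}^{n_{s+t}})):M'$ for every $c\in\fa(R)$ puts $cr$ exactly in the situation of the lemma with parameter count $t-1$ (viewing $x_{s+t}^{n_{s+t}}$ as joining the fixed block $x_1^{n_1},\ldots,x_s^{n_s}$), whence the inductive hypothesis gives $\fa(R)^{t-1}\cdot cr\subseteq(x_1^{n_1},\ldots,x_{s+t}^{n_{s+t}})$. Letting $c$ run over $\fa(R)$ produces the desired $\fa(R)^t$-multiple.

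The main obstacle is essentially notational rather than conceptual: at each step one must confirm that the rearranged sequence of positive powers of parameters is still part of a system of parameters so that property (2) applies, and one must track carefully that exactly one factor of $\fa(R)$ is consumed per peel of the induction, yielding $\fa(R)^t$ after $t$ peels. No new algebraic ingredient beyond (2) is required.
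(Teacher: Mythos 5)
Your proposal is correct and follows essentially the same route as the paper's proof: both peel off the last parameter $x_{s+t}$ by isolating its term, apply property (2) once to the rearranged part of a system of parameters to absorb a single factor of $\fa(R)$, and then invoke the statement for $t-1$ with $x_{s+t}^{n_{s+t}}$ absorbed into the fixed block. The only difference is that you spell out the base case and the re-indexing explicitly, whereas the paper compresses this into a one-line "by induction."
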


\begin{proof} Set $I = (x_1^{n_1}, \ldots, x_{s}^{n_s})$, let $a_1,\ldots,a_t\in \fa(R)$, and let 
\[
\gamma\in (I,x_{s+1}^{n_{s+1}+m_{s+1}}, \ldots, x_{s+t}^{n_{s+t}+m_{s+t}}): (x_{s+1}^{m_{s+1}}\cdots x_{s+t}^{m_{s+t}}).
\]
Then there is an $r\in R$ so that  $\gamma x_{s+1}^{m_{s+1}}\cdots x_{s+t-1}^{m_{s+t-1}}-rx_{s+t}^{n_{s+t}}\in (I,x_{s+1}^{n_{s+1}},\ldots, x_{s+t-1}^{n_{s+t-1}}):x^{m_{s+t}}_{s+t}.$ Therefore 
\[
a_1\gamma\in (I,x_{s+1}^{n_{s+1}+m_{s+1}}, \ldots, x_{s+t-1}^{n_{s+t-1}+m_{s+t-1}},x_{s+t}^{n_{s+t}}): (x_{s+1}^{m_{s+1}}\cdots x_{s+t-1}^{m_{s+t-1}}).
\]
By induction, $a_t\cdots a_2 a_1\gamma \in (x_1^{n_1}, \ldots, x_{s}^{n_s}, x_{s+1}^{n_{s+1}}, \ldots, x_{s+t}^{n_{s+t}}).$
\end{proof}

\begin{lemma}\label{Direct limit of tight closures lemma}  Let $(R, \fm, k)$ be an excellent equidimensional local ring of prime characteristic $p>0$. Suppose that $x_1, \ldots, x_s, x_{s+1}, \ldots, x_{s+t}$ is a part of system of parameters of $R$, and set $I = (x_1, \ldots, x_s)$ and $K = (I,x_{s+1}, \ldots, x_{s+t})$. If we identify 
\[
H^t_K(R/I)\cong \varinjlim_n\left(\cdots \to \frac{R}{(I,x_{s+1}^n,\ldots, x_{s+t}^n)}\xrightarrow{x_{s+1}\cdots x_{s+t}}\frac{R}{(I,x_{s+1}^{n+1},\ldots, x_{s+t}^{n+1})}\to \cdots\right)
\]
then
\[
0^{*_R}_{H^t_K(R/I)} \cong \varinjlim_n \frac{(I, x_{s+1}^n, \ldots, x_{s+t}^n)^*}{(I, x_{s+1}^n, \ldots, x_{s+t}^n)}.
\]
\end{lemma}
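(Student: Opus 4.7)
The plan is to establish the two natural inclusions by tracing elements through the identification of $H^t_K(R/I)$ as a direct limit, essentially extending Smith's argument \cite{S94} for the top local cohomology module, with Lemma~\ref{ann and colon} supplying the substitute for colon capturing needed in this more general setting.

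First I would verify the containment from right to left, which is essentially a matter of bookkeeping. Given $y \in (I, x_{s+1}^n, \ldots, x_{s+t}^n)^*$, pick $c \in R^{\circ}$ with $c y^{p^e} \in (I, x_{s+1}^n, \ldots, x_{s+t}^n)^{[p^e]}$ for all $e \gg 0$. Under the stated identifications, the map $F^e_R\colon H^t_K(R/I) \to H^t_K(R/I^{[p^e]})$ sends the class of $y$ at level $n$ to the class of $y^{p^e}$ at level $p^e n$ of the cofinal system $\varinjlim_m R/(I^{[p^e]}, x_{s+1}^{p^e m}, \ldots, x_{s+t}^{p^e m})$ computing $H^t_K(R/I^{[p^e]})$. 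The assumption then says $c\cdot F^e_R(\eta) = 0$ for all $e\gg 0$, so $\eta \in 0^{*_R}_{H^t_K(R/I)}$.

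For the reverse inclusion, take $\eta \in 0^{*_R}_{H^t_K(R/I)}$ represented by $y + (I, x_{s+1}^n, \ldots, x_{s+t}^n)$, and choose $c \in R^{\circ}$ annihilating $F^e_R(\eta)$ for all $e \gg 0$. Unwinding the direct limit description of $H^t_K(R/I^{[p^e]})$ yields, for every such $e$, an integer $\ell = \ell(e) \geq 0$ with
\[
c\,y^{p^e} (x_{s+1} \cdots x_{s+t})^{p^e \ell} \in (I^{[p^e]}, x_{s+1}^{p^e(n+\ell)}, \ldots, x_{s+t}^{p^e(n+\ell)}).
\]
The main obstacle is the extraneous monomial $(x_{s+1}\cdots x_{s+t})^{p^e\ell}$, which depends on $e$ and would prevent $c$ alone from serving as a tight-closure multiplier. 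This is exactly what Lemma~\ref{ann and colon} handles: applying it to the partial system of parameters $x_1^{p^e}, \ldots, x_s^{p^e}, x_{s+1}^{p^e n}, \ldots, x_{s+t}^{p^e n}$ with colon-elements $x_{s+1}^{p^e \ell}, \ldots, x_{s+t}^{p^e \ell}$ gives
\[
\fa(R)^t \cdot c\,y^{p^e} \subseteq (I^{[p^e]}, x_{s+1}^{p^e n}, \ldots, x_{s+t}^{p^e n}) = (I, x_{s+1}^n, \ldots, x_{s+t}^n)^{[p^e]}.
\]

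To conclude, I would invoke excellence of $R$ (so $R$ is a homomorphic image of a Cohen-Macaulay local ring and therefore $\dim R/\fa(R) < d$) together with equidimensionality to find, via prime avoidance, an element $c' \in \fa(R) \cap R^{\circ}$. Then $(c')^t c \in R^{\circ}$ is independent of $e$ and satisfies $(c')^t c \, y^{p^e} \in (I, x_{s+1}^n, \ldots, x_{s+t}^n)^{[p^e]}$ for all $e \gg 0$, which is exactly the statement $y \in (I, x_{s+1}^n, \ldots, x_{s+t}^n)^*$. Thus $\eta$ lies in the right-hand direct limit, completing the proof.
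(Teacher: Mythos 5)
Your proof is correct and follows essentially the same route as the paper's: both directions are handled by unwinding the direct-limit description, and the key step of killing the extraneous monomial $(x_{s+1}\cdots x_{s+t})^{p^e\ell}$ is done exactly as in the paper, via Lemma~\ref{ann and colon} together with an element of $\fa(R)^t\cap R^{\circ}$ supplied by equidimensionality and excellence.
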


\begin{proof} 
It easy to see that
\[
0^{*_R}_{H^t_K(R/I)}\supseteq  \varinjlim_n \frac{(I, x_{s+1}^n, \ldots, x_{s+t}^n)^*}{(I, x_{s+1}^n, \ldots, x_{s+t}^n)}.
\]
Now suppose $\eta\in 0^{*R}_{H^t_K(R/I)}$. Without loss of generality we may assume that $\eta$ is represented by an element $x+(I,x_{s+1},\ldots, x_{s+t})\in \frac{R}{(I,x_{s+1},\ldots, x_{s+t})}$. The Frobenius action 
\[
F^e: H^t_{K}(R/I)\to H^t_K(R/I^{[p^e]})\cong \varinjlim_n \frac{R}{(I^{[p^e]},x^{p^en}_{s+1},\ldots, x_{s+t}^{p^en})}
\]
 maps $\eta$ to the element $\eta^{p^e}$ represented by $x^{p^e}+(I^{[p^e]},x_{s+1}^{p^e},\ldots,x_{s+t}^{p^e})\in \frac{R}{(I^{[p^e]},x_{s+1}^{p^e},\ldots,x_{s+t}^{p^e})}.$
 We are assuming there is $c\in R^\circ$ such that $c\eta^{p^e}=0$ for all $e\gg 0$. So for each $e\gg0$ there is an $n\in \NN$ such that $cx^{p^e}(x_{s+1}\cdots x_{s+t})^{p^e(n-1)}\in(I^{[p^e]},x_{s+1}^{p^en},\ldots, x_{s+t}^{p^en})$. We are assuming $R$ is equidimensional, hence there is $d \in R^{\circ} \cap \fa(R)^t$ and it follows
 \[
 dcx^{p^e}\in (I^{[p^e]},x_{s+1}^{p^e},\ldots, x_{s+t}^{p^e})=(I,x_{s+1},\ldots, x_{s+t})^{[p^e]}
 \]
for all $e \gg 0$ by Lemma~\ref{ann and colon}. Therefore $\eta$ is represented by an element of $\frac{(I,x_s+1,\ldots,x_{s+t})^*}{(I,x_{s+1},\ldots, x_{s+t})}.$
\end{proof}

\begin{remark}\label{Remark in section 5} Let $(R, \fm,k)$ be an equidimensional excellent local ring of dimension $d$ and of characteristic $p>0$. Let $x_1, \ldots, x_i$ be a part of system of parameters of $R$, and set $I = (x_1, \ldots, x_i)$. Then
\begin{enumerate}
\item $0^{*_R}_{H^{d-i}_{\fm}(R/I)}$ agrees with the usual tight closure $0^{*}_{H^{d-i}_{\fm}(R/I)}$, here we consider $H^{d-i}_{\fm}(R/I)$ as an $R$-module. Indeed, set $K = (x_{i+1}, \ldots, x_d)$ a parameter ideal of $R/I$. We have $H^{d-i}_{\fm}(R/I) \cong H^{d-i}_K(R/I)$. Thus 
\begin{align*}
H^{d-i}_{\fm}(R/I) \otimes_R F^e_*(R) \cong H^{d-i}_K(R/I) \otimes_R F^e_*(R) &\cong H^{d-i}_K(F^e_*(R)/IF^e_*(R))\\
& \cong H^{d-i}_{\fm}(F^e_*(R)/IF^e_*(R)).
\end{align*}
Therefore if we identify $F^e_*(R)$ with $R$, then the map $H^{d-i}_{\fm}(R/I) \xrightarrow{\mathrm{id} \otimes F^e} H^{d-i}_{\fm}(R/I) \otimes_R F^e_*(R)$ can be identified with the $e$-th Frobenius action with respect to $R$, $F^e_R: H^{d-i}_{\fm}(R/I) \to H^{d-i}_{\fm}(R/I^{[p^e]})$. Now two tight closures $0^{*_R}_{H^{d-i}_{\fm}(R/I)}$ and $0^{*}_{H^{d-i}_{\fm}(R/I)}$ are the same by their definitions.
\item Let $\fa_i(R)=\Ann(H^i_\fm(R))$ and let $\fa(R)=\fa_0(R)\cdots \fa_{d-1}(R)$. Notice that since $\dim R/\fa (R) < d$ and $R$ is equidimensional that $R^\circ\cap \fa(R)\neq 0$. Moreover for any part of system of parameters $y_1, \ldots, y_i$ of $R$ we have $\fa (R) H^j_{\fm}(R/(y_1, \ldots, y_i))=0$ for all $j < d-i$ (see \cite[Remark 2.2, Lemma~3.7]{CQ16}).  Therefore for all $\eta \in  H^{j}_{\fm}(R/I)$, $j < d-i$, we have $\fa (R) F^e_R( \eta) = 0 \in H^j_{\fm}(R/I^{[p^e]})$ for all $e \ge 0$. Thus $0^{*_R}_{H^{j}_{\fm}(R/I)} = H^{j}_{\fm}(R/I)$  for all $j < d-i$. 
\end{enumerate}
\end{remark}
By the above remark, an equidimensional excellent local ring $(R, \fm,k)$ is $F$-nilpotent if and only if $0^{*_R}_{H^{j}_{\fm}(R)} = 0^{F_R}_{H^{j}_{\fm}(R)}$  for all $j \le d$.

Let $(R,\fm,k)$ be a local ring of prime characteristic $p>0$ and $I\subseteq R$ an ideal. Suppose that $\dim(R/I)=t$. We say that $R/I$ is $F$-nilpotent with respect to $R$ if the following hold:
\begin{enumerate}
\item $H^i_\fm(R/I)$ is $F$-nilpotent with respect $R$ for each $i<t$.
\item $0^{*_R}_{H^t_\fm(R/I)}=0^{F_R}_{H^t_\fm (R/I)}$.
\end{enumerate}

\begin{theorem}\label{general deformation section 5} Let $(R,\fm,k)$ be an excellent equidimensional local ring of dimension $d$ and prime characteristic $p>0$. Let $x_1, \ldots, x_i$, $i \le d$, be a filter regular sequence of $R$ and let $I = (x_1, \ldots, x_{i-1})$. The following are equivalent
\begin{enumerate} 
\item $R/I^{[p^e]}$ is $F$-nilpotent with respect to $R$ for all $e \ge 0$.
\item $R/(I^{[p^e]},x_i^{p^{e'}})$ is $F$-nilpotent with respect to $R$ for all $e, e' \ge 0$.
\end{enumerate}
\end{theorem}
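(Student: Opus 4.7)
The plan is to reduce the lower local cohomology conditions to Theorem~\ref{general Deformation} and treat the top cohomology equality $0^{*_R}=0^{F_R}$ separately in each direction, using the connecting map from a long exact sequence for $(1)\Rightarrow(2)$ and the direct limit descriptions of Lemmas~\ref{Direct limit of Frobenius closures lemma} and~\ref{Direct limit of tight closures lemma} for $(2)\Rightarrow(1)$. Extend $x_1,\ldots,x_i$ by prime avoidance to a filter regular system of parameters $x_1,\ldots,x_{i-1},x_i,y_{i+1},\ldots,y_d$ of $R$; then $\dim R/I^{[p^e]}=d-i+1$ and $\dim R/(I^{[p^e]},x_i^{p^{e'}})=d-i$.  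By Lemma~\ref{filter}(3), each $I^{[p^e]}$ is generated by a filter regular sequence, so by Theorem~\ref{general Katzman} it satisfies condition (LC), and moreover $x_i$ is a filter regular element on $R/I^{[p^e]}$ for every $e$. Thus Theorem~\ref{general Deformation} applies in both directions, establishing the equivalence of the lower cohomology parts of (1) and (2).

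For $(1)\Rightarrow(2)$, given $\mu\in 0^{*_R}_{H^{d-i}_\fm(R/(I^{[p^e]},x_i^{p^{e'}}))}$, consider the connecting homomorphism $\delta\colon H^{d-i}_\fm(R/(I^{[p^e]},x_i^{p^{e'}}))\to H^{d-i+1}_\fm(R/I^{[p^e]})$ from the long exact sequence of $0\to R/(I^{[p^e]}:x_i^{p^{e'}})\xrightarrow{x_i^{p^{e'}}}R/I^{[p^e]}\to R/(I^{[p^e]},x_i^{p^{e'}})\to 0$ combined with the finite-length isomorphism $H^{d-i+1}_\fm(R/(I^{[p^e]}:x_i^{p^{e'}}))\cong H^{d-i+1}_\fm(R/I^{[p^e]})$.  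Because $\delta$ is $R$-linear and commutes with the relative Frobenius actions, $\delta(\mu)\in 0^{*_R}_{H^{d-i+1}_\fm(R/I^{[p^e]})}$; by hypothesis~(1) this coincides with $0^{F_R}$, so $\delta(F^M_R(\mu))=F^M_R(\delta(\mu))=0$ for some $M$. By exactness, $F^M_R(\mu)=\alpha(\nu)$ for some $\nu\in H^{d-i}_\fm(R/I^{[p^{e+M}]})$, where $\alpha$ is induced by the quotient $R/I^{[p^{e+M}]}\to R/(I^{[p^{e+M}]},x_i^{p^{e'+M}})$.  Since $H^{d-i}_\fm(R/I^{[p^{e+M}]})$ is lower cohomology of $R/I^{[p^{e+M}]}$, by (1) it is $F$-nilpotent with respect to $R$, so $F^{M'}_R(\nu)=0$ for some $M'$. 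By functoriality of $\alpha$ with Frobenius, $F^{M+M'}_R(\mu)=\alpha(F^{M'}_R(\nu))=0$, so $\mu\in 0^{F_R}$.

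For $(2)\Rightarrow(1)$, let $\eta\in 0^{*_R}_{H^{d-i+1}_\fm(R/I^{[p^e]})}$. By Lemma~\ref{Direct limit of tight closures lemma} and cofinality of powers of $p$ in $\mathbb{N}$, $\eta$ has a representative at level $n=p^{e'}$ by an element $r\in(I^{[p^e]},x_i^{p^{e'}},y_{i+1}^{p^{e'}},\ldots,y_d^{p^{e'}})^*$.  The same $r$ represents a class $\mu$ at level $p^{e'}$ of the direct limit for $H^{d-i}_\fm(R/(I^{[p^e]},x_i^{p^{e'}}))$ (the ideal at this level is identical), and Lemma~\ref{Direct limit of tight closures lemma} identifies $\mu$ with an element of $0^{*_R}$. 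By hypothesis~(2), $\mu\in 0^{F_R}$, so Lemma~\ref{Direct limit of Frobenius closures lemma} supplies $N\ge p^{e'}$ with $r(y_{i+1}\cdots y_d)^{N-p^{e'}}\in(I^{[p^e]},x_i^{p^{e'}},y_{i+1}^N,\ldots,y_d^N)^F$. Multiplying by $x_i^{N-p^{e'}}$ (and using that Frobenius closure of an ideal $J$ satisfies $xJ^F\subseteq(xJ)^F$, together with the inclusion $x_i^{N-p^{e'}}\cdot(I^{[p^e]},x_i^{p^{e'}},y_{i+1}^N,\ldots,y_d^N)\subseteq(I^{[p^e]},x_i^N,y_{i+1}^N,\ldots,y_d^N)$) yields $r(x_iy_{i+1}\cdots y_d)^{N-p^{e'}}\in(I^{[p^e]},x_i^N,y_{i+1}^N,\ldots,y_d^N)^F$, which by Lemma~\ref{Direct limit of Frobenius closures lemma} witnesses $\eta\in 0^{F_R}$.

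The main obstacle lies in the asymmetric behavior of the top cohomology under the connecting map: $\delta$ carries $0^{*_R}$ forward into $0^{*_R}$, making $(1)\Rightarrow(2)$ accessible by a diagram chase, but it does not naturally lift a tight-closure witness for $\eta$ to a tight-closure witness for a preimage $\mu$.  The direct limit descriptions resolve this asymmetry by showing that the relevant tight-closure ideals at level $p^{e'}$ for $H^{d-i+1}_\fm(R/I^{[p^e]})$ and for $H^{d-i}_\fm(R/(I^{[p^e]},x_i^{p^{e'}}))$ coincide, so a single representative $r$ simultaneously witnesses $\eta\in 0^{*_R}$ and $\mu\in 0^{*_R}$.
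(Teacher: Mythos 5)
Your proof is correct and follows essentially the same route as the paper: the lower local cohomology modules are handled by Theorem~\ref{general Deformation} (with the (LC) hypothesis supplied by Theorem~\ref{general Katzman}), and the top-degree statement is settled via the connecting homomorphism for $(1)\Rightarrow(2)$ and via the direct-limit descriptions of Lemmas~\ref{Direct limit of Frobenius closures lemma} and~\ref{Direct limit of tight closures lemma} for $(2)\Rightarrow(1)$. The only cosmetic difference is that in $(2)\Rightarrow(1)$ the paper phrases the final step as $\delta(\theta)=\eta$ plus a diagram chase, whereas you carry out the equivalent computation explicitly on ideal representatives by multiplying by $x_i^{N-p^{e'}}$ and using $x J^F\subseteq (xJ)^F$.
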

\begin{proof} $(1) \Rightarrow (2)$ It is enough to prove that $R/(I,x_i)$ is $F$-nilpotent with respect to $R$. Moreover by Theorem~\ref{general Deformation} we need only to prove that $0^{*_R}_{H^{d-i}_{\fm}(R/(I,x_i))}$ is $F$-nilpotent with respect to $R$. Similar to the proof of Theorem \ref{general Deformation} we have the following commutative diagram
\[
\begin{CD}
H^{d-i}_{\fm}(R/I) @>>> H^{d-i}_{\fm}(R/(I,x_i)) @>\delta>> H^{d-i+1}_{\fm}(R/I)  \\
@VVF^{e}_RV @VVF^{e}_RV @VVF^{e}_RV \\
H^{d-i}_{\fm}(R/I^{[p^e]})  @>\beta>> H^{d-i}_{\fm}(R/(I,x_i)^{[p^e]}) @>>>  H^{d-i+1}_{\fm}(R/I^{[p^e]}). \\
@VVF^{e'}_RV @VVF^{e'}_RV  \\
H^{d-i}_{\fm}(R/I^{[p^{e+e'}]})  @>>> H^{d-i}_{\fm}(R/(I,x_i)^{[p^{e+e'}]}). \\
\end{CD}
\]
For any element $\eta \in 0^{*_R}_{H^{d-i}_{\fm}(R/(I,x)}$ we have $\delta(\eta) \in  0^{*_R}_{H^{d-i+1}_{\fm}(R/I)}$ by Lemma \ref{Direct limit of tight closures lemma} and chasing the image of $\eta$. Therefore we can choose $e$ large enough such that $F^e_R(\delta(\eta)) = 0$. Thus $F^e_R(\eta) \in \mathrm{Im}(\beta)$. On the other hand $H^{d-i}_{\fm}(R/I^{[p^e]})$ is $F$-nilpotent with respect to $R$, so we have $F^{e+e'}_R(\eta) = 0$ for all $e' \gg 0$. Hence $0^{*_R}_{H^{d-i}_{\fm}(R/(I,x))} = 0^{F_R}_{H^{d-i}_{\fm}(R/(I,x))}$, and $R/(I,x_i)$ is $F$-nilpotent.

$(2) \Rightarrow (1)$ It is enough to prove that $R/I$ is $F$-nilpotent. Moreover by Theorem \ref{general Deformation} we need only to prove that $0^{*_R}_{H^{d-i+1}_{\fm}(R/I)}$ is $F$-nilpotent with respect to $R$. Extend $x_1, \ldots,x_i$ to a full system of parameters $x_1, \ldots, x_d$ and let $\eta \in 0^{*_R}_{H^{d-i+1}_{\fm}(R/I)}$. By Lemma \ref{Direct limit of tight closures lemma} and by replacing $x_i, \ldots,x_d$ by their $p^e$-powers, $e \gg 0$, we can assume that $\eta$ is represented by an element $x+(I,x_{i},\ldots, x_{d})\in \frac{R}{(I,x_{i},\ldots, x_{d})}$, where $x \in (I, x_i, \ldots, x_d)^*$. This element maps to some element $\theta \in 0^{*_R}_{H^{d-i}_{\fm}(R/(I,x_i))}$. Hence in the following commutative diagram $\delta(\theta) = \eta$
\[
\begin{CD}
 H^{d-i}_{\fm}(R/(I,x_i)) @>\delta>> H^{d-i+1}_{\fm}(R/I)  \\
 @VVF^{e}_RV @VVF^{e}_RV \\
H^{d-i}_{\fm}(R/(I,x_i)^{[p^e]}) @>>>  H^{d-i+1}_{\fm}(R/I^{[p^e]}). 
\end{CD}
\]
Since $R/(I,x_i)$ is $F$-nilpotent, $F^e_R(\theta) = 0$ for large enough $e$. So $F^e_R(\eta) = 0$, and hence $R/I$ is $F$-nilpotent.
\end{proof}

An application of the previous theorem is the following deformation type result for $F$-nilpotent rings.

\begin{theorem} Let $(R,\fm,k)$ be an excellent equidimensional local ring of dimension $d$ and prime characteristic $p>0$. The following are equivalent:
\begin{enumerate}
\item The ring $R$ is $F$-nilpotent.
\item For each filter regular element $x$ the ring $R/(x)$ is $F$-nilpotent with respect to $R$.
\item There exists a filter regular element $x$ such that for each $n\in \NN$ the ring $R/(x^n)$ is $F$-nilpotent with respect to $R$.
\item There exists a filter regular element $x$ such that for each $e\in \NN$ the ring $R/(x^{p^e})$ is $F$-nilpotent with respect to $R$.
\end{enumerate}
\end{theorem}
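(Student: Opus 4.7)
The four-way equivalence will follow directly from Theorem~\ref{general deformation section 5} applied to the singleton filter regular sequence $(x)$ with $I = (x_1, \ldots, x_0) = (0)$. In this specialization $I^{[p^e]} = 0$ for every $e$, so $R/I^{[p^e]}$ is simply $R$ and the relative Frobenius $F^e_R$ on this quotient is the ordinary Frobenius. By the remark following Lemma~\ref{Direct limit of tight closures lemma}, ``$R$ is $F$-nilpotent with respect to $R$'' is then just a restatement of ``$R$ is $F$-nilpotent'': the relative tight closure $0^{*_R}_{H^j_\fm(R)}$ coincides with the usual $0^*_{H^j_\fm(R)}$, and similarly for the Frobenius closure, for every $j \le d$.

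With this identification, the equivalence (1) $\Leftrightarrow$ (4) is precisely Theorem~\ref{general deformation section 5} applied to the above data: its condition (1) becomes ``$R$ is $F$-nilpotent'' while its condition (2) becomes ``$R/(x^{p^{e'}})$ is $F$-nilpotent with respect to $R$ for every $e' \ge 0$'', which is our (4). For (1) $\Rightarrow$ (2), the plan is to apply the same instance of Theorem~\ref{general deformation section 5} to each filter regular element $x$ separately and specialize $e' = 0$ to recover ``$R/(x)$ is $F$-nilpotent with respect to $R$''. The remaining implications are immediate: by Lemma~\ref{filter}(3), $x^n$ is filter regular whenever $x$ is, so (2) $\Rightarrow$ (3) follows at once, and (3) $\Rightarrow$ (4) is the case $n = p^e$.

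Since essentially all of the substantive work --- the diagram chases, the Nagel--Schenzel identification of top local cohomology, and the (LC) bound from Theorem~\ref{general Katzman} --- is already packaged inside Theorem~\ref{general deformation section 5}, the main obstacle here is conceptual rather than computational: one must verify that feeding $I = 0$ into the relative framework of Section~\ref{Section F-nilpotent} recovers the absolute $F$-nilpotence of $R$. That reconciliation is precisely what the cited remark provides, so the final theorem drops out at essentially no extra cost once Theorem~\ref{general deformation section 5} is in hand.
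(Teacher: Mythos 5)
Your proposal is correct and is exactly the argument the paper intends: the theorem is stated there as an immediate application of Theorem~\ref{general deformation section 5} with $i=1$, so $I=(0)$, and the identification of ``$F$-nilpotent with respect to $R$'' with ordinary $F$-nilpotence when $I=0$ is supplied by Remark~\ref{Remark in section 5}, just as you say. The routine implications $(2)\Rightarrow(3)\Rightarrow(4)$ via Lemma~\ref{filter}(3) are also handled as the paper would.
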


We next give a proof of Theorem~\ref{Main theorem frobenius closure}.

\begin{theorem}\label{characterization F-nil 1} Let $(R, \fm,k)$ be an excellent local ring of dimension $d$ and of prime characteristic $p>0$. Consider the following statements:
\begin{enumerate}
\item $R$ is $F$-nilpotent.
\item For every parameter ideal $\fq$ we have $\fq^* = \fq^F$.
\item There exists filter regular sequence $x_1, \ldots, x_d$ such that 
\[
(x_1^{n_1}, \ldots, x_d^{n_d})^* = (x_1^{n_1}, \ldots, x_d^{n_d})^F
\]
 for all $n_1, \ldots, n_d \ge 1$.
\item There exists filter regular sequence $x_1, \ldots, x_d$ such that 
\[
(x_1^{p^{e_1}}, \ldots, x_d^{p^{e_d}})^* = (x_1^{p^{e_1}}, \ldots, x_d^{p^{e_d}})^F
\]
for all $e_1, \ldots, e_d \ge 0$.
\end{enumerate}
Then $(1)\Rightarrow (2)\Rightarrow (3)\Rightarrow (4)$. If $R$ is equidimensional then $(4)\Rightarrow (1)$.
\end{theorem}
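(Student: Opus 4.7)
The plan is to establish the cycle $(1)\Rightarrow(2)\Rightarrow(3)\Rightarrow(4)\Rightarrow(1)$, in which the middle two implications are essentially formal. For $(2)\Rightarrow(3)$ pick any filter regular system of parameters $x_1,\ldots,x_d$ (available by prime avoidance) and note that $(x_1^{n_1},\ldots,x_d^{n_d})$ is a parameter ideal for all $n_i\ge 1$, so (2) applies. For $(3)\Rightarrow(4)$ simply specialize to $n_i=p^{e_i}$.

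For $(1)\Rightarrow(2)$, I would iterate Theorem~\ref{general deformation section 5} forward. First, Proposition~\ref{Basic Proposition about F-nilpotent Rings}(3) forces $R$ to be equidimensional, and by passing to the faithfully flat extension $R(t):=R[t]_{\fm R[t]}$ I may assume the residue field is infinite; both $F$-nilpotence and the target identity $\fq^*=\fq^F$ descend from $R(t)$ to $R$ using that $(\fq R(t))^{[p^e]}\cap R=\fq^{[p^e]}$ by faithful flatness. With an infinite residue field, a routine prime avoidance argument produces a filter regular system of parameters $y_1,\ldots,y_d$ generating any given parameter ideal $\fq$. The hypothesis (1) is precisely the statement ``$R/0=R$ is $F$-nilpotent with respect to $R$'', and applying Theorem~\ref{general deformation section 5} iteratively with $y_1,\ldots,y_d$ at indices $i=1,\ldots,d$ (taking $e=e'=0$ in the conclusion at each step) yields ``$R/(y_1,\ldots,y_d)=R/\fq$ is $F$-nilpotent with respect to $R$.'' Because $R/\fq$ is zero-dimensional, this collapses to the equality $\fq^*=\fq^F$.

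For $(4)\Rightarrow(1)$ under equidimensionality, I would reverse the same iteration, exploiting the biconditional structure of Theorem~\ref{general deformation section 5}. Condition (4) with $e_1=\cdots=e_{d-1}=e$ and $e_d=e'$ is exactly hypothesis (2) of Theorem~\ref{general deformation section 5} at level $i=d$ (the $0$-dimensional quotient $R/(x_1^{p^e},\ldots,x_{d-1}^{p^e},x_d^{p^{e'}})$ being $F$-nilpotent with respect to $R$ is equivalent to the stated tight$=$Frobenius equality), so the ``iff'' delivers that $R/(x_1,\ldots,x_{d-1})^{[p^e]}$ is $F$-nilpotent with respect to $R$ for all $e\ge 0$. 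To propagate downward while retaining full flexibility of exponents at each stage, rerun Theorem~\ref{general deformation section 5} at level $i=d$ using the shifted filter regular sequence $x_1^{p^{a_1}},\ldots,x_{d-1}^{p^{a_{d-1}}},x_d$ (filter regularity of powers is Lemma~\ref{filter}(3)); condition (4) once again supplies hypothesis (2), and one reads off that $R/(x_1^{p^{a_1}},\ldots,x_{d-1}^{p^{a_{d-1}}})$ is $F$-nilpotent with respect to $R$ for arbitrary $a_i$. Cascading this shifted re-application one dimension at a time eventually reduces to the assertion that $R/0=R$ is $F$-nilpotent with respect to $R$ — i.e., $R$ is $F$-nilpotent.

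The main obstacle is the infinite residue field reduction together with the filter regular generating set bookkeeping in $(1)\Rightarrow(2)$: one must verify that $F$-nilpotence is preserved by the extension $R\to R(t)$ and that the identity $\fq^*=\fq^F$ descends faithfully from $R(t)$ to $R$, and then ensure that prime avoidance truly produces a filter regular generating system of an arbitrary parameter ideal, not merely a filter regular sequence contained in it. The iteration arguments themselves are then mechanical applications of Theorem~\ref{general deformation section 5}, but verifying these flat-extension and prime-avoidance steps carefully is what makes this implication more than purely formal.
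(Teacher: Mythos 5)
Your proposal is correct and follows essentially the same route as the paper: both directions are consecutive applications of Theorem~\ref{general deformation section 5}, forward for $(1)\Rightarrow(2)$ and backward for $(4)\Rightarrow(1)$ (re-running the theorem on the shifted sequences $x_1^{p^{a_1}},\ldots,x_{d-1}^{p^{a_{d-1}}},x_d$ so the exponents stay independent), with the zero-dimensional quotient collapsing to $\fq^*=\fq^F$. Two small remarks: the detour through $R(t)$ is unnecessary, since prime avoidance with the single non-prime ideal $\fm\fq$ already yields a filter regular generating set of any parameter ideal over an arbitrary residue field; and in the forward iteration you must carry the whole family of conclusions with $e=e'$ arbitrary at each intermediate step (this is exactly the hypothesis of the next application), specializing to $e=e'=0$ only at the final stage.
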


\begin{proof} 
 The implications $(2)\Rightarrow (3)\Rightarrow (4)$ are obvious. We first prove that $(1)\Rightarrow (2)$.  Suppose $R$ is $F$-nilpotent and $\fq=(x_1,\ldots, x_d)$ is a parameter ideal. We may assume $x_1,\ldots, x_d$ is a filter regular sequence.  Note that $R$ is equidimensional by (3) of Proposition~\ref{Basic Proposition about F-nilpotent Rings}. Applying Theorem~\ref{general deformation section 5} consecutively we have $R/(x_1), \ldots, R/(x_1, \ldots, x_d)$ are $F$-nilpotent with respect to $R$. Moreover $\dim R/\fq = 0$ and $H^0_{\fm}(R/\fq) = R/\fq$. We also have $0^{*_R}_{R/\fq} = \fq^*/\fq$ and $0^{F_R}_{R/\fq} = \fq^F/\fq$. Therefore $\fq^* = \fq^F$ for all parameter ideals $\fq$.
 
We prove the implication $(4) \Rightarrow (1)$ under the additional assumption that $R$ equidimensional. Fix $e_1,\ldots,e_{d-1}\in \NN$ and consider the filter regular sequence $x^{p^{e_1}}_1,\ldots, x^{p^{e_{d-1}}}_{d-1}$ of $R$ and let $I_{e_1, \ldots, e_{d-1}}=(x^{p^{e_1}}_1,\ldots,x^{p^{e_{d-1}}}_{d-1})$. Then $R/(I_{e_1, \ldots, e_{d-1}}^{[p^e]},x_d^{e'})$ is $F$-nilpotent relative to $R$ for all $e, e' \ge 0$. By Theorem~\ref{general deformation section 5}, $R/I_{e_1, \ldots, e_{d-1}}$ is $F$-nilpotent relative to $R$ for all $e_1, \ldots, e_{d-1} \ge 0$. Consecutive  use of Theorem~\ref{general deformation section 5} then shows $R$ is $F$-nilpotent. 
\end{proof}

We now wish to show that if $(R,\fm,k)$ is a local ring of prime characteristic $p>0$ which is $F$-nilpotent and $I$ an ideal generated by part of system of parameters then $I^*=I^F$. But first, we will need to discuss the notion of Frobenius test exponent for parameter ideals.

\begin{definition} Let $(R, \fm, k)$ be a local ring of prime characteristic $p>0$. The Frobenius test exponent for parameter ideals of $R$ defined as follows
\[
Fte(R) = \min \{e \mid (\fq^{F})^{[p^e]} = \fq^{[p^e]} \text{ for all parameter ideals } \fq\},
\]
and $Fte(R) = \infty$ if no such $e$ exists.
\end{definition}

\begin{remark} Katzman and Sharp asked in \cite{KS06} under what hypotheses on local ring $(R,\fm,k)$ of prime characteristic $p>0$ is $Fte(R) < \infty$.  If $R$ is Cohen-Macaulay they showed that $Fte(R)$ is finite and is equal to the Hartshorne-Spieser-Lyubeznik number of $R$, which under the Cohen-Macaulay hypothesis, is the least integer $e$ for which $F^e:0^F_{H^d_\fm(R)}\to 0^F_{H^d_\fm(R)} $ is the $0$-map. The authors of \cite{HKSY06} are able to show under the weaker hypotheses $R$ is generalized Cohen-Macaulay that $Fte(R)<\infty$. 
\end{remark}
The second author of this paper uses techniques of this article in \cite{Q18} to provide a simpler proof of the main result of \cite{HKSY06}. An argument, which also utilizes the techniques of this paper, also proves the Frobenius test exponent for parameter ideals of an $F$-nilpotent ring is finite.

\begin{theorem}[\cite{Q18}]\label{fte theorem} Let $(R,\fm,k)$ be a local ring of dimension $d$ and prime characteristic $p>0$. If $H^i_\fm(R)$ is $F$-nilpotent for all $i<d$ then $Fte(R) < \infty$.
\end{theorem}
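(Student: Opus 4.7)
The strategy is to combine the Hartshorne-Speiser-Lyubeznik (HSL) theorem applied to the top local cohomology $H^d_\fm(R)$, which gives a uniform bound on the nilpotency of the Frobenius action on $0^F_{H^d_\fm(R)}$, with the uniform control over Frobenius closures of partial-parameter-like ideals that follows from the $F$-nilpotence hypothesis on the lower local cohomology modules. First, since $Fte(R)=Fte(\widehat R)$ and the hypothesis is preserved by completion, I pass to $\widehat R$ and may assume $R$ is complete. The hypothesis forces $R$ to be equidimensional by the remark following Proposition~\ref{Basic Proposition about F-nilpotent Rings}, so the colon-capturing estimates of Lemma~\ref{ann and colon} are available.

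By HSL applied to the Artinian module $H^d_\fm(R)$, there is a uniform $e_0$ such that $F^{e_0}$ annihilates $0^F_{H^d_\fm(R)}$. Given a parameter ideal $\fq=(x_1,\ldots,x_d)$, which I may take to be generated by a filter regular sequence, and an element $y\in\fq^F$, the class $y+\fq$ represents an element $\eta$ of $H^d_\fm(R)=\varinjlim_n R/(x_1^n,\ldots,x_d^n)$ lying in $0^F_{H^d_\fm(R)}$. Thus $F^{e_0}(\eta)=0$, which unwinds to
\[
y^{p^{e_0}}\,(x_1\cdots x_d)^{N-p^{e_0}}\in (x_1^N,\ldots,x_d^N)
\]
for some $N\geq p^{e_0}$. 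Applying Lemma~\ref{ann and colon} with $s=0$, $t=d$ converts this into
\[
\fa(R)^d\cdot y^{p^{e_0}}\subseteq\fq^{[p^{e_0}]},
\]
where $\fa(R)=\prod_{j<d}\Ann_R H^j_\fm(R)$.

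The remaining work is to absorb the factor $\fa(R)^d$ into some uniform Frobenius power of $\fq$. The $F$-nilpotence of $H^j_\fm(R)$ for $j<d$, together with Theorem~\ref{characterization nilpotent 1} and Theorem~\ref{general Katzman}, should yield a uniform Frobenius test exponent $e'$ for ideals generated by filter regular sequences of length $t<d$, proved by induction on $t$: for every such sequence and every $a\geq 0$,
\[
\bigl((x_1^{p^a},\ldots,x_t^{p^a}):\fm^\infty\bigr)^{[p^{e'}]}\subseteq (x_1^{p^{a+e'}},\ldots,x_t^{p^{a+e'}}).
\]
The base case $t=0$ is trivial, and the inductive step combines the containment of Theorem~\ref{characterization nilpotent 1} with the uniform annihilator estimate of Theorem~\ref{general Katzman}. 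Given this uniform exponent for length $d-1$, the final step uses Lemma~\ref{m primary} (so that $\fa(R)+\fq$ is $\fm$-primary) to convert $\fa(R)^d\cdot y^{p^{e_0}}\subseteq\fq^{[p^{e_0}]}$ into $y^{p^e}\in\fq^{[p^e]}$ for a uniform $e$ depending only on $e_0$, $e'$, and the ring $R$, but not on $\fq$ or $y$.

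The hard part is the induction establishing uniformity of $e'$ across all filter regular sequences of length $t$ and all Frobenius powers simultaneously. The constant furnished by Theorem~\ref{general Katzman} depends a priori on the chosen sequence through the containment $\fm^C\subseteq \fa(R)^{2^t}+(x_1,\ldots,x_t)$, and removing this dependence requires coupling the argument with the universal nilpotency index of the lower cohomology (guaranteed by HSL applied to the Artinian $H^j_\fm(R)$ for $j<d$) via the relative-$F$-nilpotence deformation machinery of Section~\ref{Deformation Section}. Once this uniformity is secured, the final peeling step is routine bookkeeping of Frobenius powers and colon ideals.
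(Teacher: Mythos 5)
First, a point of comparison: the paper itself offers no proof of this theorem --- it is imported from \cite{Q18}, with only the remark that the argument there uses the relative-Frobenius and filter-regular-sequence machinery of Sections~\ref{LC section}--\ref{Section F-nilpotent}. So your sketch must stand on its own. Its first half does, and it matches the opening of the quoted proof: after completing (so that $R$ is equidimensional and a quotient of a Cohen--Macaulay ring), the Hartshorne--Speiser--Lyubeznik bound $e_0$ for $H^d_\fm(R)$ gives, for $y\in\fq^F$ with $\fq=(x_1,\ldots,x_d)$, that $y^{p^{e_0}}(x_1\cdots x_d)^{N-p^{e_0}}\in(x_1^N,\ldots,x_d^N)$ for some $N$, and Lemma~\ref{ann and colon} (which needs only a system of parameters, so the filter-regularity reduction --- itself true by prime avoidance since $\fq\not\subseteq P$ for every non-maximal associated prime $P$, but unjustified in your write-up --- is not needed for this step) yields $\fa(R)^d\,y^{p^{e_0}}\subseteq\fq^{[p^{e_0}]}$, uniformly in $\fq$.

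The gap is the entire second half, which is where the content of the theorem sits. (i) The proposed closing move is vacuous: $\fq^{[p^{e_0}]}$ is already $\fm$-primary, so Lemma~\ref{m primary} gives no information and $\fq^{[p^{e_0}]}:\fm^\infty=R$. Worse, replacing the limit-closure membership by $\fa(R)^d\,y^{p^{e_0}}\subseteq\fq^{[p^{e_0}]}$ loses information that Frobenius powers cannot recover: raising to $p^{e''}$ only yields $(\fa(R)^{d})^{[p^{e''}]}\,y^{p^{e_0+e''}}\subseteq\fq^{[p^{e_0+e''}]}$, and the multiplier never disappears. What is actually needed is a uniform $e_1$ with $\bigl((\fq')^{\lim}\bigr)^{[p^{e_1}]}\subseteq(\fq')^{[p^{e_1}]}$ for every parameter ideal $\fq'$, where $(\fq')^{\lim}$ denotes the kernel of $R/\fq'\to H^d_\fm(R)$; your sketch never connects this length-$d$ object to the colon ideals $(x_1^{p^a},\ldots,x_t^{p^a}):\fm^\infty$ with $t<d$ appearing in your statement $(*)$, and that bridge (a descending induction through the sequences $H^{i-1}_\fm(R/(x))\to H^i_\fm(R)\xrightarrow{\ x\ }H^i_\fm(R)$ and the relative Frobenius maps, as in Theorem~\ref{general Deformation}) is not routine bookkeeping. (ii) The statement $(*)$ itself is only asserted. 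Theorem~\ref{characterization nilpotent 1} gives $(x_1^{p^a},\ldots,x_t^{p^a}):\fm^\infty\subseteq(x_1^{p^a},\ldots,x_t^{p^a})^F$ with an exponent depending on the element and on the sequence; upgrading this to a single $e'$ valid for all filter regular sequences and all $a$ simultaneously, by threading the nilpotency indices of the Artinian modules $H^i_\fm(R)$, $i<d$, through the Nagel--Schenzel isomorphism, is precisely the work carried out in \cite{Q18}. You have correctly identified the two inputs (HSL at the top, uniform $F$-nilpotence below), but the two inductions you defer are the proof.
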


Theorem~\ref{characterization F-nil 1} and Theorem~\ref{fte theorem} yield the desired corollary.

\begin{corollary}\label{Frobenius closure partial parameter ideal} Let $(R, \fm,k)$ be an excellent $F$-nilpotent local ring of dimension $d$. Then for every part of system of parameters $x_1, \ldots, x_t$ of $R$ we have $(x_1, \ldots, x_t)^* = (x_1, \ldots, x_t)^F$.
\end{corollary}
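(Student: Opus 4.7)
The plan is to reduce the statement to the case of a full parameter ideal (where Theorem~\ref{characterization F-nil 1} applies) by completing the partial system of parameters with high powers of the missing parameters, and then to extract information about the partial ideal via Krull's intersection theorem. The finite Frobenius test exponent from Theorem~\ref{fte theorem} is exactly the bookkeeping tool that makes this extraction work uniformly across the family of completed parameter ideals.

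In detail, fix a part of a system of parameters $x_1,\ldots,x_t$ and extend it to a full system of parameters $x_1,\ldots,x_t,x_{t+1},\ldots,x_d$. Put $I=(x_1,\ldots,x_t)$ and, for each integer $n\ge 1$, set
\[
\fq_n=(x_1,\ldots,x_t,x_{t+1}^n,\ldots,x_d^n),
\]
which is again a full parameter ideal of $R$. Since $R$ is $F$-nilpotent, Theorem~\ref{characterization F-nil 1} gives $\fq_n^*=\fq_n^F$ for every $n$, and Theorem~\ref{fte theorem} provides a single exponent $e_0=Fte(R)<\infty$ with the property that $(\fq^F)^{[p^{e_0}]}=\fq^{[p^{e_0}]}$ for every parameter ideal $\fq$. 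So if $y\in I^*\subseteq \fq_n^*=\fq_n^F$, then
\[
y^{p^{e_0}}\in \fq_n^{[p^{e_0}]}=\bigl(x_1^{p^{e_0}},\ldots,x_t^{p^{e_0}},x_{t+1}^{np^{e_0}},\ldots,x_d^{np^{e_0}}\bigr)
\]
for every $n\ge 1$.

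I would then intersect over $n$. Working in the quotient $R/I^{[p^{e_0}]}$, the right-hand side becomes $(x_{t+1}^{np^{e_0}},\ldots,x_d^{np^{e_0}})$, which is contained in $\fm^{np^{e_0}}(R/I^{[p^{e_0}]})$. By Krull's intersection theorem, $\bigcap_{n\ge 1}\fm^{np^{e_0}}(R/I^{[p^{e_0}]})=0$, so the class of $y^{p^{e_0}}$ in $R/I^{[p^{e_0}]}$ is zero, i.e.\ $y^{p^{e_0}}\in I^{[p^{e_0}]}$. This exhibits $y$ as an element of $I^F$. Combined with the always-valid inclusion $I^F\subseteq I^*$, this yields $I^*=I^F$, as desired.

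The main obstacle to plan around is the need for a \emph{uniform} Frobenius test exponent $e_0$ that works simultaneously for the infinite family $\{\fq_n\}_{n\ge 1}$; without uniformity the Krull intersection step collapses, because the witness exponent could grow with $n$. This is precisely the content of Theorem~\ref{fte theorem}, so once that is in hand the argument is essentially a prime-avoidance/Krull-intersection exercise. Aside from that, one should also note that extending $x_1,\ldots,x_t$ to a full system of parameters is legitimate (the $F$-nilpotent hypothesis forces $R$ to be equidimensional by Proposition~\ref{Basic Proposition about F-nilpotent Rings}, so part of a system of parameters can always be completed to a full one), and that replacing $x_{t+1},\ldots,x_d$ by their $n$-th powers preserves the property of being a system of parameters.
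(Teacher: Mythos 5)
Your proposal is correct and follows essentially the same route as the paper's proof: extend to a full system of parameters, apply Theorem~\ref{characterization F-nil 1} to the parameter ideals $(x_1,\ldots,x_t,x_{t+1}^n,\ldots,x_d^n)$, invoke the uniform Frobenius test exponent from Theorem~\ref{fte theorem}, and finish with the Krull intersection theorem. Your remark that the uniformity of $Fte(R)$ over the family $\{\fq_n\}$ is the crux of the argument is exactly the right emphasis.
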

\begin{proof} Extend $x_1, \ldots, x_t$ to a full system of parameters $x_1, \ldots, x_d$. We can assume that $R$ is reduced. Let $x\in (x_1,\ldots, x_t)^*$, then $x \in (x_1, \ldots, x_t, x_{t+1}^n, \ldots, x_d^n)^F$ for all $n \ge 1$. Let $e = Fte(R)$, then 
\[
x^{p^e} \in \bigcap_{n \ge 1}(x_1, \ldots, x_t, x_{t+1}^n, \ldots, x_d^n)^{[p^e]} = (x_1, \ldots, x_t)^{[p^e]}
\]
by the Krull interestion theorem. Thus we have $x \in (x_1, \ldots, x_t)^F$. 
\end{proof}
Combining the above result with Corollary \ref{tight imply nilpotent} we have the following.
\begin{theorem}\label{Characterization of F-nil 2} Let $(R, \fm, k)$ be an equidimensional excellent local ring of dimension $d$ and of prime characteristic $p>0$. Then the following are equivalent:
\begin{enumerate}
\item $R$ is $F$-nilpotent.
\item There exists filter regular sequence $x_1, \ldots, x_t$ so that  $(x_1^{p^{e}}, \ldots, x_t^{p^{e}})^F = (x_1^{p^{e}}, \ldots, x_t^{p^{e}})^*$
for all $t \le d$ and for all $e \ge 0$.
\end{enumerate}
\end{theorem}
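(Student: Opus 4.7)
The plan is to derive this as a clean two-line consequence of the corollaries just established, since both directions have already been done essentially in the preceding arguments. Interpreting the statement as: there exists a filter regular system of parameters $x_1,\ldots,x_d$ such that $(x_1^{p^e},\ldots,x_t^{p^e})^F=(x_1^{p^e},\ldots,x_t^{p^e})^*$ for all $t\le d$ and $e\ge 0$.

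For $(1)\Rightarrow (2)$, I would first choose, by prime avoidance, a system of parameters $x_1,\ldots,x_d$ which is also a filter regular sequence on $R$ (such a choice always exists by the comment after the definition of filter regular sequences). For any fixed $t\le d$ and $e\ge 0$, the elements $x_1^{p^e},\ldots,x_t^{p^e}$ form a filter regular sequence by Lemma~\ref{filter}(3), and in particular they are part of a system of parameters of $R$. Since $R$ is assumed $F$-nilpotent, Corollary~\ref{Frobenius closure partial parameter ideal} applies directly and yields $(x_1^{p^e},\ldots,x_t^{p^e})^F=(x_1^{p^e},\ldots,x_t^{p^e})^*$, which is condition (2).

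For $(2)\Rightarrow (1)$, the hypothesis of condition (2) is exactly the hypothesis of Corollary~\ref{tight imply nilpotent}, which concludes that $R$ is $F$-nilpotent; so this implication is immediate.

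The main work here is not in this theorem itself but in the two corollaries being invoked: one direction rests on Corollary~\ref{Frobenius closure partial parameter ideal}, whose proof uses the finiteness of the Frobenius test exponent (Theorem~\ref{fte theorem}) together with Krull's intersection theorem to pass from ``full parameter ideal'' statements to ``partial parameter ideal'' statements; the other direction rests on Corollary~\ref{tight imply nilpotent}, which is obtained by combining the local cohomological characterization in Theorem~\ref{characterization nilpotent 2} with Proposition~\ref{tight closure is nilpotent}. Thus no obstacle remains at the level of this final theorem beyond verifying that the filter regular property passes to Frobenius powers, which is precisely Lemma~\ref{filter}(3).
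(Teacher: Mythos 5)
Your proof is correct and is essentially the paper's own argument: the paper obtains this theorem precisely by combining Corollary~\ref{Frobenius closure partial parameter ideal} (giving $(1)\Rightarrow(2)$, applied to Frobenius powers of a filter regular system of parameters) with Corollary~\ref{tight imply nilpotent} (giving $(2)\Rightarrow(1)$). Your reading of the slightly ambiguously stated condition $(2)$ as asserting the existence of a full filter regular system of parameters $x_1,\ldots,x_d$ with the stated equalities for all $t\le d$ is the intended one.
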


Srinivas and Takagi prove that the property of being $F$-nilpotent localizes for rings which are assumed to be $F$-finite, \cite[Proposition~2.4]{ST17}. All $F$-finite rings are known to be excellent, \cite[Theorem~2.5]{K76}. Theorem~\ref{Characterization of F-nil 2} gives a method of showing $F$-nilpotence localizes for all prime characteristic rings which are excellent.

\begin{corollary}\label{F nilpotent localizes} Let $(R,\fm,k)$ be an excellent local ring of prime characteristic $p>0$. If $R$ is $F$-nilpotent then $R_P$ is $F$-nilpotent for each $P\in\Spec(R)$.
\end{corollary}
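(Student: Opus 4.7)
The plan is to verify condition (2) of Theorem~\ref{Characterization of F-nil 2} for the localization $R_P$. If $P = \fm$ there is nothing to prove; if $\Ht(P) = 0$ then $R_P$ is zero-dimensional and $F$-nilpotent by Proposition~\ref{Basic Proposition about F-nilpotent Rings}(1); so we set $h := \Ht(P) \ge 1$ and $P \ne \fm$. By parts (2) and (3) of Proposition~\ref{Basic Proposition about F-nilpotent Rings}, after replacing $R$ with $R/\sqrt{0}$ we may assume $R$ is reduced and equidimensional, so that $R_P$ is excellent, equidimensional, and of dimension $h$.

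Because $R$ is reduced, every associated prime of $R$ is a minimal prime, and none of them equals $P$. Using prime avoidance, I would construct inductively a filter regular system of parameters $x_1, \ldots, x_d$ of $R$ with $x_1, \ldots, x_h \in P$ whose images $y_i := x_i/1 \in R_P$ form a system of parameters of $R_P$; by Lemma~\ref{filter}(2) the sequence $y_1, \ldots, y_h$ is then a regular sequence (in particular a filter regular sequence) in $R_P$. Applying Corollary~\ref{Frobenius closure partial parameter ideal} to the $F$-nilpotent ring $R$ gives
\[
(x_1^{p^e}, \ldots, x_t^{p^e})^F \;=\; (x_1^{p^e}, \ldots, x_t^{p^e})^* \quad \text{in } R \text{ for all } 1 \le t \le d,\ e \ge 0.
\]
Once it is shown that this equality transfers to $(y_1^{p^e}, \ldots, y_t^{p^e})^F_{R_P} = (y_1^{p^e}, \ldots, y_t^{p^e})^*_{R_P}$ in $R_P$ for all $1 \le t \le h$ and $e \ge 0$, Theorem~\ref{Characterization of F-nil 2} applied to $R_P$ will conclude that $R_P$ is $F$-nilpotent.

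The main obstacle is precisely this transfer step, since tight closure does not in general commute with localization. The plan to bypass this is to exploit the finite Frobenius test exponent $Fte(R)$ granted by Theorem~\ref{fte theorem}. Starting from $a/1 \in (y_1^{p^e}, \ldots, y_t^{p^e})^*_{R_P}$ witnessed by some $c \in R_P^{\circ}$ lifted to an element of $R$ avoiding the minimal primes of $R$ contained in $P$, clearing denominators produces, for each $e' \gg 0$, some $s_{e'} \in R \setminus P$ satisfying $s_{e'}\, c\, a^{p^{e'}} \in (x_1^{p^{e+e'}}, \ldots, x_t^{p^{e+e'}})$ in $R$. Combining these relations with the equality $(x_1^{p^{e}}, \ldots, x_t^{p^{e}})^F = (x_1^{p^{e}}, \ldots, x_t^{p^{e}})^*$ in $R$ and the Krull intersection argument used in the proof of Corollary~\ref{Frobenius closure partial parameter ideal}, one absorbs the surviving denominators into Frobenius powers (using that $s_{e'}$ becomes a unit in $R_P$ and that raising the entire relation to $p^{Fte(R)}$ converts tight-closure information into honest Frobenius-closure information) to deduce $a/1 \in (y_1^{p^e}, \ldots, y_t^{p^e})^F_{R_P}$, completing the proof.
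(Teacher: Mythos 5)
Your overall strategy coincides with the paper's: reduce to showing that for a suitable (filter) regular system of parameters $y_1,\ldots,y_h$ of $R_P$ coming from elements $x_1,\ldots,x_h$ of $R$, one has $(y_1^{p^e},\ldots,y_t^{p^e})^F=(y_1^{p^e},\ldots,y_t^{p^e})^*$ in $R_P$, and then invoke Theorem~\ref{Characterization of F-nil 2} (or Theorem~\ref{characterization F-nil 1}). The step you correctly single out as ``the main obstacle'' --- transferring the equality $I^F=I^*$ from $R$ to $R_P$ --- is, however, not actually carried out by your sketch, and the proposed bypass does not work. After clearing denominators you obtain, for each $e'\gg 0$, a relation $s_{e'}\,c\,a^{p^{e'}}\in(x_1^{p^{e+e'}},\ldots,x_t^{p^{e+e'}})$ with $s_{e'}\in R\setminus P$ \emph{depending on $e'$} and completely uncontrolled. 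To conclude that $a$ lies in $((x_1^{p^e},\ldots,x_t^{p^e}))^*$ in $R$ you would need a single multiplier in $R^{\circ}$ working for all $e'$, and no such element is produced. The Frobenius test exponent of Theorem~\ref{fte theorem} cannot repair this: it bounds the exponent needed once Frobenius-closure membership is already known, whereas the difficulty here is establishing tight- (or Frobenius-) closure membership downstairs in the first place; raising the displayed relation to the $p^{Fte(R)}$-th power merely reproduces a relation of the same shape with $s_{e'}^{p^{Fte(R)}}$ in place of $s_{e'}$. This varying-denominator problem is exactly why localization of tight closure is hard, and it is not something one can absorb by a Krull intersection argument.

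The paper resolves this step by quoting a genuinely deep external input: for parameter ideals in an excellent reduced (equidimensional) local ring, tight closure commutes with localization (Aberbach--Hochster--Huneke, and Smith via plus closure). Granting that, $(IR_P)^*=I^*R_P=I^FR_P=(IR_P)^F$, since Frobenius closure always commutes with localization, and Theorem~\ref{characterization F-nil 1} finishes the argument. So your proof has a genuine gap unless you either cite that localization theorem or supply an independent proof of the containment $((x_1,\ldots,x_t)R_P)^*\subseteq (x_1,\ldots,x_t)^*R_P$; the rest of your outline (reduction to the reduced equidimensional case, construction of the filter regular sequence inside $P$, the fact that it becomes a regular sequence in $R_P$, and the minor adjustment of $c$ to an element of $R^{\circ}$) is sound or routinely fixable.
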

\begin{proof}
By Proposition~\ref{Basic Proposition about F-nilpotent Rings} we may assume $R$ is reduced. Suppose $\Ht(P)=t$ and $I=(a_1,\ldots, a_t)R_P$ a parameter ideal of $R_P$. Following the proof of \cite[Proposition~6.9]{QS17} we can choose partial parameter sequence $x_1,\ldots,x_t$ of $R$ such that $I=(x_1,\ldots, x_t)R_P$. It is well-known that tight closure commutes with localization for parameter ideals under our hypotheses, see \cite[Theorem~8.1]{AHH97} and \cite[Theorem~5.1]{S94}. The Frobenius closure of an ideal $I$ is the extension and contraction of the ideal along a high enough iterate of the Frobenius endomorphism. Hence the Frobenius closure of any ideal commutes with localization. Therefore for each $e\in \NN$ we have by Theorem~\ref{Characterization of F-nil 2} that
\[
I^F=((x_1,\ldots,x_t)R_P)^F=(x_1,\ldots,x_t)^FR_P=(x_1,\ldots,x_t)^*R_P=I^*,
\]
where the third equality is follows from Corollary~\ref{Frobenius closure partial parameter ideal}. Therefore the tight closure of every parameter ideal of $R_P$ is equal to its Frobenius closure and the ring $R_P$ is now seen to be $F$-nilpotent by Theorem~\ref{characterization F-nil 1}.
\end{proof}

\specialsection*{Acknowledgments}

This project resulted from a trip the second named author took to the University of Utah, we would like to thank Linquan Ma and Karl Schwede for making that trip possible.

\bibliographystyle{alpha}
\bibliography{References}

\end{document}